\newcommand\norm[1]{\left\lVert#1\right\rVert}
\newtheorem{thm}{Theorem}[section]
\newtheorem{lemma}[thm]{Lemma}
\newtheorem{prop}[thm]{Proposition}
\theoremstyle{definition}
\newtheorem{definition}[thm]{Definition}
\theoremstyle{remark}
\numberwithin{equation}{section}
\title{Global regularity and fast small scale formation for Euler patch equation in a smooth domain}
\author{Alexander Kiselev${}^{*}$ and Chao Li${}^{**}$}
\thanks{\footnotesize${}^{*}$ Department of Mathematics, Duke University, Durham NC 27708, USA; email: kiselev@math.duke.edu.}
\thanks{\footnotesize${}^{**}$ Department of Mathematics, Rice University; Houston TX 77005, USA; email: chao.li@rice.edu.}
\begin{document}

\maketitle
\pagestyle{plain}

	\begin{abstract}
		It is well known that the Euler vortex patch in $\mathbb{R}^{2}$ will remain regular if it is regular enough initially.
In bounded domains, the regularity theory for patch solutions is
less complete. In this paper, we study Euler vortex patches in a general smooth bounded domain. We prove global in time regularity
by providing an upper bound on the growth of curvature of the patch boundary.
For a special symmetric scenario, we construct an example of double exponential curvature growth, showing that our upper bound is
qualitatively sharp.
	\end{abstract}

	\section{Introduction}

	The incompressible Euler equation in a compact domain $D \subset \mathbb{R}^d$ with natural no-penetration boundary
conditions is given by
\[ \partial_t u + (u \cdot \nabla)u = \nabla p, \,\,\, \nabla \cdot u =0, \,\,\, \left. u \cdot n \right|_{\partial D} =0. \]
	We will only deal with the case $d=2,$ and it will be conveninient for us to work with the equation in vorticity
form. Thus, setting $\omega = {\rm curl} u,$ we have
	\begin{equation}\label{euler}
	\partial_t \omega + (u\cdot\nabla)\omega = 0,
	\end{equation}
	
	\begin{equation}\label{BSL}
	u = \nabla^{\perp}(-\Delta_{D})^{-1}\omega.
	\end{equation}
	
	Here $\nabla^{\perp}$ and $x^{\perp}$ denote $(\partial_{2},-\partial_{1})$ and $(x_{2},-x_{1})$ respectively, and $\Delta_{D}$ is the
 Dirichlet Laplacian (see e.g \cite{Majda, Marchioro}). Equation (\ref{BSL}) is called the Biot-Savart law. \\
	
	Global regularity for the $2D$ Euler equation with smooth initial data has been known since 1930s \cite{Wolibner}.
In this paper, we are interested in a class of rough solutions called vortex patches. These solutions need to be understood
in an appropriate weak sense.
	
	Assuming for a moment that $u$ is sufficiently regular, define particle trajectories associated to the vector field $u$ by
	
	\begin{equation}\label{particletrajectory}
	\dfrac{d}{dt}\Phi_{t}(x) = u(\Phi_{t}(x),t), \quad \Phi_{0}(x) = x.
	\end{equation}
	
	Since the active scalar $\omega$ is transported by the velocity $u$ from (\ref{euler}), it is straightforward to check that
	
	\begin{equation}\label{advection}
	\omega(x,t) = \omega_{0}(\Phi^{-1}_{t}(x)).
	\end{equation}
	
	The solution satisfying (\ref{BSL}), (\ref{particletrajectory}) and (\ref{advection}) is called a solution to the
Euler equation in Yudovich sense (see \cite{Yudovich,Majda}).
	
	An Euler vortex patch is a solution to the Euler equation in Yudovich sense of the form
	
	\begin{equation}
	\omega(x,t) = \sum_{k = 1}^{N}\theta_{k}\chi_{\Omega_{k}(t)}(x).
	\end{equation}
	
	Here $\theta_{k}$ are some constants, and $\Omega_{k}(t)$ are (evolving in time) bounded open sets in $D$ with smooth (in some sense) boundaries, whose
closures $\overline{\Omega}_{k}(t)$ are mutually disjoint.\\
	
	It is well known that Yudovich solutions to the
	2D Euler equation with initial data in $L^{\infty}\cap L^{1}$ exist and are unique (see \cite{Yudovich} or \cite{Majda,Marchioro} for a modern proof).
The main reason behind this result is the log-Lipschitz control on the velocity $u,$ which allows to define the trajectories uniquely and derive
appropriate estimates on the flow map $\Phi_t.$
In this paper, we study a stronger notion of patch regularity which refers to sufficient smoothness of the patch boundaries $\partial\Omega_{k}$,
as well as to the lack of both self-intersections of each patch boundary and touching of different patches.

	To be precise, we have the following definitions.
	
	\begin{definition}
		Let $\Omega \subseteq D$ be an open set whose boundary $\partial\Omega$ is a simple closed $C^{1}$ curve with arc-length $|\partial\Omega|$. A constant speed parametrization of $\partial\Omega$ is any counter-clockwise parametrization $z : \mathbb{T} \rightarrow \mathbb{R}^{2}$ of $\partial\Omega$ with $|z^{\prime}| = \frac{1}{2\pi}|\partial\Omega|$ on the circle $\mathbb{T} := [-\pi,\pi]$ (with $\pm\pi$ identified), and we define $\norm{\Omega}_{C^{m,\gamma}} :=  \norm{z}_{C^{m,\gamma}}$.
	\end{definition}
	
	\begin{definition}\label{generalpatchdef}
		Let $\theta_{1},...,\theta_{N} \in \mathbb{R}\setminus\{0\}$, and for each $t \in [0,T)$, let $\Omega_{1}(t),...,\Omega_{N}(t) \subseteq D$ be open sets with pairwise disjoint closures whose boundaries $\partial\Omega_{k}(t)$ are simple closed curves. Let
		\[
		\omega(x,t) := \sum\limits_{k=1}^{N}\theta_{k}\chi_{\Omega_{k}(t)}(x).
		\]
		Suppose $\omega$ also satisfies
		\[
		\omega(x,t) = \omega_{0}(\Phi^{-1}_{t}(x))
		\]
		where $\dfrac{d}{dt}\Phi_{t}(x) = u(\Phi_{t}(x),t)$, $\Phi_{0}(x) = x$, and $u$ is given by (\ref{BSL}). Then $\omega$ is called a patch solution to (\ref{euler}) and (\ref{BSL}) with initial data $\omega_{0}$ on the interval $[0,T)$. In addition, if we also have
		\[
		\sup\limits_{t \in [0,T^{\prime}]}\norm{\Omega_{k}(t)}_{C^{m,\gamma}} \, < \infty
		\]
		for each $k$ and $T^{\prime} \in (0,T)$, then $\omega$ is a $C^{m,\gamma}$ patch solution to (\ref{euler}) and (\ref{BSL}) on $[0,T)$.
		
	\end{definition}
	
	\textbf{Remark}. In the above definition, the domains $\Omega_{k}(t)$ are allowed to touch $\partial D$ as long as $\partial\Omega_{k}(t)$ remain $C^{m,\gamma}$.\\

	Possible singularity formation for two dimensional Euler vortex patches has been conjectured based on the numerical simulations
in \cite{Buttke} (see \cite{AMajda} for a discussion). In 1993, Chemin \cite{Chemin} proved that the boundary of a two dimensional Euler patch will remain regular for all time
if it is regular enough ($C^{1,\gamma}$) initially (see also the work by Bertozzi and Constantin in \cite{Constantin} for a different proof).
For vortex patches in domains with boundaries, Depauw \cite{Depauw} has proved global regularity of a single $C^{1,\gamma}$ patch in the half plane when the patch does
not touch the boundary initially. If the initial patch touches the boundary, then \cite{Depauw} proved that the regularity $C^{1,\gamma}$ will be retained for a finite time.
Dutrifoy \cite{Dutrifoy} proved that for the initial patch touching the boundary, there is a global solution but in a strictly weaker space $C^{1,s}$ for
some $s \in (0,\gamma)$. Recently, Kiselev, Ryzhik, Yao and Zlato$\check{s}$ \cite{KRYZ} have proved global regularity for two dimensional $C^{1,\gamma}$
Euler vortex patch solutions (which may involve multiple patches) in half plane without loss of regularity. \\

	Our goal here is to explore Euler patch dynamics in a general smooth bounded domain. We derive global upper bounds on growth of curvature as well
as construct an example showing sharpness of the upper bound in some special scenarios with symmetry.\\
		
	Here are our main results.

	\begin{thm}\label{singlepatch}
		Let $D$ be a $C^{4}$ bounded domain, $\gamma \in (0,1)$, then for each $C^{1,\gamma}$ single patch initial data $\omega_{0}$, there exists
a unique global $C^{1,\gamma}$ patch solution $\omega$ to (\ref{euler}) and (\ref{BSL}) with $\omega(\cdot,0) = \omega_{0}$. The curvature of the patch
boundary grows at most double exponentially.
	\end{thm}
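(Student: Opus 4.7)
The plan is to adapt Chemin's contour-dynamics argument to the bounded-domain setting. I parametrize $\partial\Omega(t)$ by its constant-speed parametrization $z(\alpha,t)$ and control the quantity $X(t) := \|\partial\Omega(t)\|_{C^{1,\gamma}} = \|z(\cdot,t)\|_{C^{1,\gamma}(\mathbb{T})}$. Yudovich's theorem already provides a unique global weak solution with $\|\omega(\cdot,t)\|_{L^\infty}$ conserved, so it suffices to produce an a priori bound showing that $X(t)$ grows at most doubly exponentially; a standard mollification and continuation argument then upgrades this to a genuine $C^{1,\gamma}$ patch solution.

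Decompose the Dirichlet Green's function as $G_D(x,y) = -\tfrac{1}{2\pi}\log|x-y| - R_D(x,y)$, where for fixed $y \in D$ the regular part $R_D(\cdot,y)$ is the harmonic function on $D$ with boundary data $\tfrac{1}{2\pi}\log|\cdot - y|\big|_{\partial D}$. The $C^4$ hypothesis on $D$ combined with boundary Schauder theory yields uniform control of $R_D$ and its derivatives up to $\partial D$. Correspondingly split $u = u_s + u_r$, where
\[ u_s(x,t) = \frac{\theta}{2\pi}\int_{\Omega(t)} \frac{(x-y)^{\perp}}{|x-y|^2}\,dy \]
is the full-plane Biot-Savart velocity of $\omega$ and $u_r$ is the contribution of $R_D$. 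Mimicking the reflected-patch device of \cite{KRYZ} but using $R_D$ in place of an exact reflection, I show that $\nabla u_r$ is uniformly bounded (and even $C^\gamma$) on $\overline D$ with norm depending only on $\|\omega\|_{L^\infty}$ and the geometry of $D$, even when $\partial\Omega$ touches $\partial D$. This term is thus a lower-order perturbation in what follows.

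The remaining work is the Chemin estimate for $u_s$. Using the cancellation of the Biot-Savart kernel along the $C^{1,\gamma}$ curve $\partial\Omega$, one extracts the two classical inequalities
\[ \|\nabla u_s\|_{L^\infty} \leq C\|\omega\|_{L^\infty} \log\!\bigl(e + X(t)/\delta(t)\bigr), \qquad \bigl[\nabla u_s|_{\partial\Omega}\bigr]_{C^\gamma} \leq C\|\omega\|_{L^\infty}\,X(t)/\delta(t)^\gamma, \]
with $\delta(t) := \inf_\alpha |\partial_\alpha z(\alpha,t)| = |\partial\Omega(t)|/(2\pi)$. Differentiating \eqref{particletrajectory} in $\alpha$ gives $\partial_t \partial_\alpha z = (\nabla u)(z)\,\partial_\alpha z$; adding the standard tangential correction that preserves the constant-speed normalization and combining the two displayed estimates yields
\[ \frac{d}{dt} X(t) \leq C\|\omega\|_{L^\infty}\, X(t)\log\!\bigl(e + X(t)\bigr), \qquad \frac{d}{dt} \log\delta(t) \geq -\|\nabla u\|_{L^\infty}. \]
The second clause prevents $\delta(t)$ from collapsing faster than singly exponentially, and Gronwall applied to the first then yields the double exponential bound $X(t) \leq \exp(\exp(Ct))$. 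The curvature of $\partial\Omega$ is pointwise controlled by $X(t)$ together with the lower bound on $\delta(t)$, and therefore satisfies the same double exponential bound.

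I expect the main obstacle to be the analysis of $u_r$ when $\partial\Omega$ touches $\partial D$. In the half-plane setting of \cite{KRYZ} an explicit reflection reduces this to a whole-plane estimate, but in a general $C^4$ domain there is no such exact reduction; one must instead show directly that $\nabla_x R_D(x,y)$ and its Hölder derivatives remain bounded uniformly as $x,y$ approach the same point of $\partial D$, exploiting the fact that the harmonic extension defining $R_D$ inherits sufficient regularity from the $C^4$ boundary. A secondary technicality is the usual bookkeeping needed to preserve the constant-speed parametrization under the Lagrangian flow, which is handled by adding a purely tangential vector field along the curve.
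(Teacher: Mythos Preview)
Your proposal has a genuine gap at exactly the point you flag as the main obstacle, but your proposed resolution of it is incorrect. You assert that $\nabla u_r$ is uniformly bounded and even $C^\gamma$ on $\overline D$ with norm depending only on $\|\omega\|_{L^\infty}$ and $D$, and that one should ``show directly that $\nabla_x R_D(x,y)$ and its H\"older derivatives remain bounded uniformly as $x,y$ approach the same point of $\partial D$.'' This is false. Near the boundary one has, up to a genuinely regular remainder, $R_D(x,y)\approx -\tfrac{1}{2\pi}\log|x-\widetilde y|$ where $\widetilde y$ is the reflection of $y$ across $\partial D$ (this is precisely the content of Proposition~\ref{estimatesfromxiao}). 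Hence $\nabla_x R_D(x,y)$ behaves like $(x-\widetilde y)/|x-\widetilde y|^2$, and when the patch touches $\partial D$ the point $\widetilde y$ can coincide with $x$. The operator $\omega\mapsto \nabla u_r$ therefore carries a Calder\'on--Zygmund kernel of the same strength as the principal term; in particular $\nabla u_r$ is \emph{not} $C^\gamma$ with a bound depending only on $\|\omega\|_{L^\infty}$, any more than $\nabla u_s$ is. Treating $u_r$ as a lower-order perturbation cannot work.

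What the paper does instead is exactly what the half-plane argument of \cite{KRYZ} does, but with the explicit reflection replaced by the local $C^3$ reflection $S$ across $\partial D$: one writes $G_D=\tfrac{1}{2\pi}(\log|x-y|-\log|x-\widetilde y|)+B(x,y)$ in the tubular neighborhood, where only $B$ is genuinely regular. The reflected term produces a velocity $I_{12}$ that is essentially the Biot--Savart field of the reflected patch $\widetilde\Omega$ (with a $C^2$ Jacobian weight), and the heart of the proof is to show that $(\nabla I_{12})w$ is $\dot C^\gamma$ with the right logarithmic bound. This requires the geometric lemma (Lemma~\ref{geometriclemma}) transported to $\widetilde\Omega$, the comparison $|w(x)|\lesssim |\widetilde w(P_x)|+A_\gamma d(x)^\gamma$, and the dichotomy estimate of Proposition~\ref{T2estimate}. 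None of this is avoided by harmonic-extension regularity for $R_D$; the cancellation that makes the estimate close comes from the tangency of $w$ (and of $\widetilde w$) to the respective patch boundaries, not from smoothness of the kernel. Your outline would need to incorporate this full analysis of the reflected term rather than dismiss it as lower order.
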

	
	\begin{thm}\label{generalpatch}
		Let $D$ be a $C^{4}$ bounded domain, $\gamma \in (0,1)$, then for each $C^{1,\gamma}$ patch initial data $\omega_{0}$, there exists a unique global regular $C^{1,\gamma}$ patch solution $\omega$ to (\ref{euler}) and (\ref{BSL}) with $\omega(\cdot,0) = \omega_{0}$. The curvature of boundary grows at most triple exponentially.	
	\end{thm}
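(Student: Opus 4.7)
The plan is to reduce Theorem~\ref{generalpatch} to the single-patch analysis of Theorem~\ref{singlepatch} by simultaneously tracking the minimum inter-patch separation
\[
\delta(t) := \min_{i\ne j}\mathrm{dist}\bigl(\overline{\Omega}_{i}(t),\overline{\Omega}_{j}(t)\bigr).
\]
The key observation is that on each boundary $\partial\Omega_k$ the velocity splits as $u = u_k^{\mathrm{self}} + u_k^{\mathrm{cross}}$, where $u_k^{\mathrm{self}}$ is the Dirichlet Biot--Savart field generated by $\theta_k \chi_{\Omega_k(t)}$ alone, and $u_k^{\mathrm{cross}}$ collects the contributions from the remaining patches. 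The self part is handled exactly as in Theorem~\ref{singlepatch}, while $u_k^{\mathrm{cross}}$ is smooth on a $\delta(t)$-neighborhood of $\overline{\Omega}_k(t)$, with Lipschitz and $C^{1,\gamma}$ norms controlled by a negative power of $\delta(t)$, since the singular kernel $\nabla^{2}_{x}G_{D}(x,y)$ is evaluated only at $|x-y|\ge \delta(t)$.

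First I would make this decomposition precise using the splitting $G_{D}(x,y) = -\tfrac{1}{2\pi}\log|x-y| + h_{D}(x,y)$ with $h_{D}\in C^{3}(\overline{D}\times\overline{D})$, which is available because $D$ is $C^4$. Combined with standard Calder\'on--Zygmund estimates restricted to $|x-y|\ge \delta(t)$, this yields
\[
\norm{\nabla u_{k}^{\mathrm{cross}}}_{C^{\gamma}(\partial \Omega_{k}(t))} \;\le\; C\,\delta(t)^{-\alpha}
\]
for some fixed $\alpha>0$ depending only on $\gamma$. Feeding this bound into the Chemin-type tangent vector equation used in the proof of Theorem~\ref{singlepatch} upgrades the curvature differential inequality to
\[
\frac{d}{dt}\log\bigl(1+\norm{\Omega_{k}(t)}_{C^{1,\gamma}}\bigr) \;\le\; C\Bigl(1+\log\bigl(1+\norm{\Omega_{k}(t)}_{C^{1,\gamma}}\bigr)+\delta(t)^{-\alpha}\Bigr).
\]

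Next I would bound $\delta(t)$ from below. Since $\norm{\omega(\cdot,t)}_{L^{1}\cap L^{\infty}}$ is conserved, Yudovich's theory gives a log-Lipschitz bound on $u$ uniformly on $\overline{D}$, so any pair of trajectories $\Phi_{t}(x),\Phi_{t}(y)$ with $x\in\overline{\Omega}_{i}(0)$, $y\in\overline{\Omega}_{j}(0)$ satisfies $|\Phi_{t}(x)-\Phi_{t}(y)|\ge c\,|x-y|^{\exp(Ct)}$ as long as the separation remains small. Applying this to a pair realizing $\delta(0)$ gives $\delta(t)\ge c\,\delta(0)^{\exp(Ct)}$, a double exponential lower bound. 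Substituting into the previous differential inequality makes the forcing $\delta(t)^{-\alpha}$ grow at most double exponentially; integrating once yields a double exponential bound on $\log\norm{\Omega_{k}(t)}_{C^{1,\gamma}}$, i.e.\ a triple exponential bound on $\norm{\Omega_{k}(t)}_{C^{1,\gamma}}$, and hence on the curvature of $\partial\Omega_{k}$.

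The main obstacle is establishing the decomposition and the power-law cross estimate cleanly in a general smooth domain, where the Green function lacks the reflection symmetry used in the half-plane analysis of \cite{KRYZ}. One must verify that the splitting of $\nabla^{2}_{x}G_{D}$ into a free-space Calder\'on--Zygmund piece plus a $C^{2}$ remainder produces constants uniform in the geometry of the $\Omega_{j}(t)$, and that the power $\alpha$ depends only on $\gamma$ and on the ambient domain $D$, not on the evolving configuration. Once these points are secured, uniqueness follows from the Yudovich theorem, and global $C^{1,\gamma}$ existence is obtained by combining the above a priori estimates with the approximation scheme already used in the proof of Theorem~\ref{singlepatch}.
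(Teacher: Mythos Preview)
Your overall strategy is the same as the paper's: split $u=\sum_i u_i$, treat the self-interaction $(\nabla u_k)w_k$ on $\Omega_k$ via the single-patch machinery of Section~2, bound the cross terms $(\nabla u_i)w_k$ for $i\ne k$ by a negative power of $\delta(t)$, use the log-Lipschitz estimate to get a double-exponential lower bound on $\delta(t)$, and integrate the resulting differential inequality to a triple-exponential bound. The paper does this with the concrete power $\delta(t)^{-3}$ and the quantity $\widetilde A(t)=A_\gamma A_{\inf}^{-1}+A_\infty$, but the structure is exactly what you describe.

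One technical claim you make is wrong and should be dropped: the harmonic correction $h_D$ is \emph{not} in $C^{3}(\overline D\times\overline D)$. When $y$ approaches $\partial D$ the boundary data $-\tfrac{1}{2\pi}\log|x-y|$ for $h_D(\cdot,y)$ becomes singular at $x=y$, and indeed Proposition~\ref{estimatesfromxiao} shows that near the boundary $h_D(x,y)\sim -\tfrac{1}{2\pi}\log|x-\widetilde y|$, which blows up as $x\to\widetilde y$. Fortunately you do not need this claim for the cross term: all that is required is the pointwise bound $|\nabla^k_x G_D(x,y)|\le C(D)|x-y|^{-k}$ for $k=2,3$ (Proposition~\ref{estimatesofgreenfunction}), which immediately yields $\|\nabla^2 u_i\|_{L^\infty(\Omega_k)}\le C\delta^{-3}$ and hence $\|\nabla u_i\|_{\dot C^\gamma(\Omega_k)}\le C\delta^{-3}$ for $i\ne k$. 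With that correction your argument matches the paper's.
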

	
	In a special case where the domain $D$ is a unit disk and the initial patch is odd with resepct to $x_{2}$ axis and consists of two symmetric single patches,
we have a sharp upper bound estimate on the curvature growth.
	
	\begin{thm}\label{symmetriccase}
		Let $D:= B_{1}(0)$ be a unit disk centered at the origin, and let $\gamma \in (0,1)$. Suppose that the initial data has the form
$\omega_{0}(x) = \chi_{\Omega_{1}}(x) - \chi_{\Omega_{2}}(x)$, where $\Omega_{1} \subset \{(x_{1}, x_{2}) : x_{1} \geq 0\}$ is connected
and $\Omega_{2} \subset \{(x_{1}, x_{2}) : x_{1} \leq 0\}$ is its reflection with respect to the $x_2$ axis.
Then for each $C^{1,\gamma}$ initial data $\omega_{0}$ of this form, there exists a unique global
$C^{1,\gamma}$ patch solution $\omega$ to (\ref{euler}) and (\ref{BSL}) with $\omega(\cdot,0) = \omega_{0}$. The curvature of the patch boundary grows at most double exponentially.	
	\end{thm}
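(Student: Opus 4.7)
The plan is to exploit the odd-in-$x_1$ symmetry of the data to reduce the problem to an essentially single-patch dynamics on the right half-disk, thereby recovering the double exponential bound of Theorem~\ref{singlepatch} rather than the triple exponential of Theorem~\ref{generalpatch}.

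First I would show that the odd symmetry $\omega(x_1,x_2,t) = -\omega(-x_1,x_2,t)$ is propagated by the Euler flow. The unit disk is invariant under the reflection $R:(x_1,x_2)\mapsto(-x_1,x_2)$, so $G_D(Rx,Ry) = G_D(x,y)$, and consequently $u$ inherits the corresponding reflection symmetry from $\omega$; in particular $u_1(0,x_2,t)=0$. The $x_2$ axis is therefore invariant under the flow map, and uniqueness of $C^{1,\gamma}$ patch solutions (from the local theory) propagates the symmetry for all times of existence. In particular $\Omega_1(t)\subset\overline{\{x_1\ge 0\}}$ and $\Omega_2(t) = R\,\Omega_1(t)$, so the full configuration is determined by $\Omega_1(t)$.

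Next I would eliminate $\Omega_2(t)$ from the Biot--Savart law via the method of images. For $x\in\overline{D^+}$ with $D^+ := D\cap\{x_1>0\}$,
\[
u(x,t) = \nabla^{\perp}_x\!\int_{\Omega_1(t)}\!\!G_+(x,y)\,dy,\qquad G_+(x,y) := G_D(x,y)-G_D(x,Ry),
\]
and $G_+$ is precisely the Dirichlet Green's function of $D^+$. The evolution of $\Omega_1(t)$ in $D^+$ is now a single-patch problem, and I would repeat the proof of Theorem~\ref{singlepatch} with $(D,G_D)$ replaced by $(D^+,G_+)$. The crucial advantage of the symmetric setting is that when $\Omega_1$ approaches the axis $\{x_1=0\}$, the velocity there is tangent to the axis ($u_1\equiv 0$), so the approach of $\Omega_1$ to its mirror image does not activate the additional growth mechanism that turns the double exponential of Theorem~\ref{singlepatch} into the triple exponential of Theorem~\ref{generalpatch}; the inter-patch distance $d(\Omega_1,\Omega_2)$ is effectively controlled by $\norm{\Omega_1}_{C^{1,\gamma}}$ itself.

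The proof of Theorem~\ref{singlepatch} then delivers a Chemin-type inequality
\[
\frac{d}{dt}\norm{\Omega_1}_{C^{1,\gamma}} \le C\,\norm{\Omega_1}_{C^{1,\gamma}}\log\bigl(2+\norm{\Omega_1}_{C^{1,\gamma}}\bigr),
\]
which integrates to the claimed double exponential growth. Global existence and uniqueness follow by combining this a priori bound with the local existence theory already implicit in Theorem~\ref{singlepatch}. The main obstacle is justifying that the kernel estimates for $G_+$ used in the proof of Theorem~\ref{singlepatch} survive near the two corner points $(0,\pm 1)$ of $D^+$, since that proof exploits $C^4$ regularity of the boundary. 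The likely remedy is that $G_+$ is obtained from the globally smooth kernel $G_D$ by a single reflection and the corners are right angles, so the pointwise and H\"older estimates on $\nabla^2_x G_+$ needed to feed the Chemin scheme localize in a neighborhood of each corner without genuine singularity; carrying this out uniformly up to the corner is the technical heart of the argument.
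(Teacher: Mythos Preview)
Your reduction to a single patch in the half-disk $D^{+}$ via the image Green's function $G_{+}(x,y)=G_{D}(x,y)-G_{D}(x,Ry)$ is a natural idea, but the corner issue you flag at the end is a genuine gap, not a technicality. Theorem~\ref{singlepatch} relies on Proposition~\ref{estimatesfromxiao}, whose proof uses a single $C^{3}$ reflection map $S$ across a smooth boundary; near the corners $(0,\pm 1)$ of $D^{+}$ there is no such map, and the Green's function $G_{+}$ involves \emph{three} simultaneous image points $y^{*}$, $Ry$, $Ry^{*}$ (with $y^{*}=y/|y|^{2}$) that all collapse together. The machinery of Section~2, in particular Lemma~\ref{secondgradient} and the comparison with the disk $\widetilde{B}_{x}$, is built for one image at a time and does not transfer to this situation without substantial new work. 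Your suggested remedy (``the corners are right angles, so the estimates localize without genuine singularity'') does not address this.

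The paper avoids the corner entirely by \emph{not} passing to $D^{+}$. It stays in the smooth disk $D$, keeps both patches, and decomposes $u=u_{1}+u_{2}$ with $u_{i}=v_{i}+\widetilde{v}_{i}$ via the explicit disk Green's function. The self-term $(\nabla u_{1})w_{1}$ is the single-patch estimate of Section~2 in the smooth domain $D$; the cross-term $(\nabla v_{2})w_{1}$ is the half-plane symmetric-patch estimate already carried out in \cite{KRYZ}. The new piece is $(\nabla\widetilde{v}_{2})w_{1}$, and here the symmetry is used in a pointwise way rather than to change the domain: since one may choose $\varphi_{1}(x)=\varphi_{2}(\bar{x})$, for $x\in\Omega_{1}$ with nearest point $P_{x}\in\partial\widetilde{\Omega}_{2}$ one gets $|w_{1}(x)|\le C A_{\gamma}d(x)^{\gamma}+C|\widetilde{w}_{2}(P_{x})|$. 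This matches perfectly against the $\min\{A_{\gamma}/|\widetilde{w}_{2}(P_{x})|,\,d(x)^{-\gamma}\}$ from Proposition~\ref{T2estimate}, yielding $\norm{(\nabla\widetilde{v}_{2})w_{1}}_{\dot C^{\gamma}}\le C(\gamma)A_{\gamma}(1+\log_{+}A_{\gamma}/A_{\inf})$ and hence the double-exponential bound. The upshot is that each reflection (across $\partial D$ or across $\{x_{1}=0\}$) is handled separately over a smooth curve, so no corner ever appears.
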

	
	\begin{thm}\label{sharpexample}
In the same setting  as in Theorem \ref{symmetriccase},
there exist an $\omega_{0}$ in $C^{1,\gamma}$ such that the curvature of the boundary of the corresponding patch solution does grow at a double exponential speed.
	\end{thm}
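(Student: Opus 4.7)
The strategy is to adapt the Kiselev--\v{S}ver\'ak (2014) mechanism for double exponential gradient growth in the unit disk to the vortex patch setting. The engine of growth is the hyperbolic structure of the flow near the boundary fixed points $(0,\pm1)$: because the oddness of $\omega$ forces the stream function to vanish on the $x_2$-axis, this axis is a flow-invariant set and the two endpoints on $\partial D$ are saddle-type fixed points whose stable/unstable manifolds are the axis and the boundary circle respectively.

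The first step would be a Biot--Savart estimate capturing the hyperbolic velocity. Using the Dirichlet Green's function on $B_1(0)$ and the odd-reflection identity $\omega(-x_1,x_2,t) = -\omega(x_1,x_2,t)$, one can write $u_2(0,x_2)$ as an explicit integral over $\Omega_1(t)$. Provided $\Omega_1(t)$ contains a positive-area reference region $R \Subset \{x_1>0\}\cap D$ bounded away from the axis and from $\partial D$, isolating the leading-order singular contribution of the kernel yields
\begin{equation*}
-u_2(0,x_2) \geq c(R)\,(1+x_2)\log\frac{1}{1+x_2} \qquad\text{for } x_2 \text{ near } -1,
\end{equation*}
which is the patch analogue of the key Kiselev--\v{S}ver\'ak bound. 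Since the flow is log-Lipschitz and $\omega$ is transported, a reference region chosen well inside $\Omega_1(0)$ persists in $\Omega_1(t)$ on any bounded time interval, so this lower bound holds uniformly in $t$ (with a possibly degraded constant).

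I would then design $\omega_0 = \chi_{\Omega_1}-\chi_{\Omega_2}$ so that $\Omega_1$ is the union of a macroscopic body containing such a reference region $R$ together with a thin nearly horizontal ``tongue'' of width $w_0$ whose $C^{1,\gamma}$-smooth tip is at distance $d_0$ from the fixed point $(0,-1)$, with $d_0 \ll 1$ and $w_0 \ll d_0$. Letting $z(t)$ be the Lagrangian trajectory of the tip and $\eta(t) := 1+z_2(t)$, the velocity bound gives
\begin{equation*}
\dot\eta(t) \leq -c\,\eta(t)\log\frac{1}{\eta(t)},
\end{equation*}
which integrates to $\eta(t)\leq \exp(-c_1 e^{c_2 t})$ so long as $z(t)$ remains close to the axis. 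Incompressibility $\partial_1 u_1(0,x_2) = -\partial_2 u_2(0,x_2) \gtrsim \log(1/\eta)$ together with the alignment of the tip tangent with the contracting direction yields $\dot w/w \leq -c\log(1/\eta)$, hence $w(t) \leq w_0\exp(-c_3(e^{c_2 t}-1))$. Since the curvature at the tip satisfies $\kappa(t) \gtrsim 1/w(t)$, this gives the desired double exponential lower bound $\kappa(t) \geq c_4\exp(c_3 e^{c_2 t})$.

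The main obstacle is to rigorously justify the transverse-compression estimate over the relevant time window. One must track the tangent vector of $\partial\Omega_1(t)$ along the tip trajectory and show that it remains aligned with the contracting axis of the local hyperbolic flow; otherwise the normal width could fail to shrink at the stated rate. This requires controlling the Frenet frame evolution under a velocity gradient that is only logarithmically singular, and ruling out pathological scenarios such as tongue self-folding or tangential rotation away from the compressing axis. A secondary difficulty is balancing the unstable outward drift of $z(t)$ (which tends to expel the tip from the hyperbolic neighborhood in finite time) against the double exponential attraction of $\eta(t)$ to zero; this can be arranged by choosing the initial geometric scales with sufficient separation, namely $w_0\ll d_0^N$ for an appropriately large exponent $N$, so that the amplification achieved before the tip exits the hyperbolic region is already of the double exponential size claimed.
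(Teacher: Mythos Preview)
Your overall strategy---porting the Kiselev--\v{S}ver\'ak hyperbolic mechanism to patches---matches the paper's. But the velocity lower bound you assert is the crux of the argument, and as stated it fails.

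You claim that a fixed reference region $R\Subset\{x_1>0\}\cap D$, bounded away from both the axis and $\partial D$, produces
\[
-u_2(0,x_2)\;\ge\; c(R)\,(1+x_2)\log\frac{1}{1+x_2}.
\]
It does not. For $x$ near the boundary fixed point and $y\in R$, the Biot--Savart kernel $\nabla_x^\perp G_D(x,y)$ is smooth and uniformly bounded; integrating over $R$ yields only $|u(x)|\le C(R)$, and after factoring out the vanishing normal component one gets at best $-u_2\sim c(R)(1+x_2)$, hence merely \emph{exponential} approach. In the Kiselev--\v{S}ver\'ak Key Lemma the coefficient driving the dynamics is $\Omega(x)=\tfrac{4}{\pi}\int_{Q(x)} \frac{y_1y_2}{|y|^4}\omega\,dy$, and its size $\sim\log(1/|x|)$ requires $\omega\equiv 1$ at \emph{every} dyadic scale between $|x|$ and $O(1)$. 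Your thin tongue of width $w_0\ll d_0$ contributes only $O(w_0/d_0)$ to that integral and cannot supply the logarithm. With the initial data you describe (macroscopic body plus thin tongue) the ODE $\dot\eta\le -c\,\eta\log(1/\eta)$ is simply not available, and the whole scheme stalls at single-exponential growth. This gap is upstream of the Frenet-frame difficulty you flagged.

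The paper sidesteps both issues. It takes $\omega_0=1$ on essentially all of $D^+$ except a strip of width $\delta^{10}$ along the axis, so vorticity is present at every scale down to $\delta^{10}$; then the two-scale bootstrap of Kiselev--\v{S}ver\'ak (tracking $a(t),b(t)$ with $a(0)=\delta^{10}$, $b(0)=\delta$) makes $\Omega(a(t),0)$ itself grow exponentially and forces $a(t)$ to decay double-exponentially. The curvature argument is also much simpler than your transverse-compression plan: one follows the leftmost point of $\partial P\cap\partial D$, which stays on $\partial D$ by the no-penetration condition and is driven toward the origin; since the axis $x_1=0$ is a barrier the patch cannot cross, the curve $\partial P$ must turn through nearly $\pi/2$ within distance $\lesssim a(t)$, giving curvature $\gtrsim 1/a(t)$. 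This avoids entirely the tangent-alignment and self-folding obstacles you identified.
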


	\textbf{Remarks}. 1. To avoid excessive technicalities,
we did not make an effort to optimize the $C^4$ regularity assumption on the domain $D$ in Theorems~\ref{singlepatch}, \ref{generalpatch}. \\
2. It is not clear whether the triple exponential upper bound of Theorem~\ref{generalpatch} is sharp.
We have no concrete scenario for it, but at the same time improving this estimate requires non-trivial new ideas.\\
	
	The rest of the paper is organized as follows. In section 2, we give the proof of Theorem \ref{singlepatch}. In section 3, we deal with the multiple patch
case, and provide the proof of Theorem \ref{generalpatch}. In section 4, we look into the special symmetric case, and prove Theorem \ref{symmetriccase}.
In the last section, we extend the example of \cite{KS} to show that the upper bound obtained in section 4 is actually sharp, thus proving Theorem \ref{sharpexample}. Throughout the paper, we denote by $C(\gamma)$, $C(D)$, $C(r)$, $C(D, \gamma)$ etc. various constants that depend only on the arguments in the bracket. We denote by $C$ universal constants. All these constants may change from line to line.

	\section{Single patch case}

	We consider a single patch $\Omega(t)\subset D$, with
	\[
	\omega(x,t) = \theta _{0} \chi _{\Omega(t)}(x).
	\]
	Without loss of generality, we set $\theta_{0} = 1$ throughout this section.\\
	
Following the ideas of \cite{Constantin}, we reformulate vortex patch evolution in terms of the evolution of a function $\varphi(x,t)$, which defines the patch via
	\begin{equation}\label{definitionOfVarphi}
	\Omega(t) = \{x : \varphi(x,t) > 0\}.
	\end{equation}
	
	If $\partial\Omega(0)$ is a simple closed $C^{1,\gamma}$ curve, there exists a function $\varphi_{0}\in C^{1,\gamma}(\overline{\Omega}(0))$,
such that $\varphi_{0} > 0$ on $\Omega(0)$, $\varphi_{0} = 0$ on $\partial\Omega(0)$ and $\inf\limits_{\partial\Omega(0)}|\nabla\varphi_{0}|>0$.
Such $\varphi_{0}$ can be obtained, for instance, by solving the Dirichlet problem
	
\[		\begin{split}
			-\Delta\varphi_{0} & = f \text{ on } \Omega(0), \\
			\varphi_{0} & = 0 \text{ on } \partial\Omega(0),
		\end{split}   \]
	with an arbitrary $0 \leq f \in C_{0}^{\infty}(\Omega(0))$ (see discussion in \cite{KRYZ}, or \cite{hardt} for a complete proof).\\
	
In what follows, we retrace some computations done in \cite{Constantin,KRYZ}; more details can be found there.
	For $x\in \Omega(t)$, we set $\varphi(x,t) = \varphi_{0}(\Phi_{t}^{-1}(x))$, with $\Phi_{t}^{-1}$ being the inverse map of $\Phi_{t}$. Then $\varphi$ solves
	\[
	\partial_{t}\varphi + (u\cdot\nabla)\varphi = 0
	\]
	on $\{ (t,x) : t > 0 \ \textrm{and}\  x\in\Omega(t) \}$.
Therefore for each $t \geq 0$, $\varphi(\cdot,t) > 0$ on $\Omega(t)$, and vanishes on $\partial\Omega(t)$ (note that $\varphi$ is not
defined on $\mathbb{R}^{2}\setminus\overline{\Omega}(t)$). Let
	\begin{equation}\label{definitionOfW}
	w = (w_{1},w_{2})=\nabla^{\perp}\varphi=(\partial_{2}\varphi,-\partial_{1}\varphi),
	\end{equation}
	and define
	
	\begin{equation}\label{AAA}
	\begin{split}
	A_{\gamma}(t) &:= \norm{w(\cdot,t)}_{\dot{C}^{\gamma}(\Omega(t))} = \sup\limits_{x,y\in\Omega(t)}\frac{|w(x,t)-w(y,t)|}{|x-y|^{\gamma}}, \\
	A_{\infty}(t) &:= \norm{w(\cdot,t)}_{L^{\infty}(\Omega(t))},\\
	A_{\inf}(t) &:= \inf\limits_{x\in\partial\Omega(t)}|w(x,t)|.
	\end{split}
	\end{equation}
	
	By our choice of $\varphi_{0}$, we have
	\[
	A_{\gamma}(0), \: A_{\infty}(0),\: A_{\inf}^{-1}(0) \: < \infty.
	\]
	
	Since $w = \nabla^{\perp}\varphi$, we know $w$ is divergence free and one can check that it solves
	\begin{equation}\label{w-evolution}
	w_{t} + (u\cdot\nabla)w = (\nabla u)w.
	\end{equation}
	
	Using \eqref{w-evolution}, one can derive the following bounds (we refer to \cite{Constantin,KRYZ} for the details).

    \begin{equation}\label{Ainftyestimate}
    A^{\prime}_{\infty}(t) \leq C(\gamma)A_{\infty}(t)\norm{\nabla u(\cdot,t)}_{L^{\infty}(\mathbb{R}^{2})},
    \end{equation}

    \begin{equation}\label{Ainfestimate}
    A^{\prime}_{\inf}(t) \geq - C(\gamma)A_{\inf}(t)\norm{\nabla u(\cdot,t)}_{L^{\infty}(\mathbb{R}^{2})},
    \end{equation}

	\begin{equation}\label{Agammaestimate}
	A^{\prime}_{\gamma}(t) \leq C(\gamma)\norm{\nabla u(\cdot,t)}_{L^{\infty}(\mathbb{R}^{2})}A_{\gamma}(t) + \norm{\nabla u(\cdot,t)w(\cdot,t)}_{\dot{C}^{\gamma}(\Omega(t))}.
	\end{equation}

	Thus it suffices to derive appropriate bounds on $\norm{\nabla u(\cdot,t)}_{L^{\infty}(\mathbb{R}^{2})}$ and $\norm{\nabla u(\cdot,t)w(\cdot,t)}_{\dot{C}^{\gamma}(\Omega(t))}.$
 This involves some estimates on the Dirichlet Green's function $G_{D}(x,y)$.\\
	
	
Recall that $G_D(x,y)$ can be written as
\begin{equation}
G_D(x,y) = \frac{1}{2\pi} \log |x-y| + h(x,y),
\end{equation}
where
\begin{equation}
   \Delta_{x}h = 0,  \quad h|_{x\in\partial D} = -\frac{1}{2\pi}\log|x-y|.
\end{equation}

	The following proposition summarizes some of the standard estimates on $G_{D}(x,y)$ we will need
(see \cite{Marchioro,Trudinger}).

    \begin{prop}\label{estimatesofgreenfunction}
   Let  $D \subset \mathbb{R}^{2}$ be a $C^4$ bounded domain. Then the Dirichlet Green's function $G_{D}(x,y)$ satisfies the following properties:

    	\begin{equation}
    		|G_{D}(x,y)| \leq C(D)(\log|x-y| + 1),
    	\end{equation}
    	\begin{equation}
    	|\nabla^{k}G_{D}(x,y)| \leq C(D)|x-y|^{-k}, \quad k = 1,2,3.
    	\end{equation}
    	Recall that $C(D)$ is a constant only depending on $D$, changing from line to line.
    \end{prop}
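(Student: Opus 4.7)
The plan is to use the standard splitting $G_D(x,y) = \frac{1}{2\pi}\log|x-y| + h(x,y)$, where $h(\cdot, y)$ is harmonic in $D$ with boundary data $-\frac{1}{2\pi}\log|x-y|$ on $\partial D$. Because $|\nabla_x^k \log|x-y|| \le C |x-y|^{-k}$ trivially, the proposition reduces to proving $|h(x,y)| \le C(D)$ and $|\nabla_x^k h(x,y)| \le C(D)|x-y|^{-k}$ for $k = 1, 2, 3$, uniformly in $x, y \in D$.

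For the $L^\infty$ bound, when $y$ is bounded away from $\partial D$ the maximum principle suffices. When $y$ approaches $\partial D$, I would exploit the $C^4$ regularity of $\partial D$ to flatten the boundary locally by a $C^4$ diffeomorphism and compare $h$ with the half-plane correction $-\frac{1}{2\pi}\log|x - \bar y|$, where $\bar y$ is the reflection of $y$; this comparison is bounded precisely because flattening preserves the Euclidean distance $|x-y|$ up to constants, so the half-plane reflected point stays at comparable distance. The residual difference is harmonic with bounded boundary data and is handled again by the maximum principle, and a partition-of-unity argument globalizes the bound.

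For the derivative bounds I would split into the two regimes $d(x, \partial D) \ge \tfrac{1}{2}|x-y|$ and $d(x, \partial D) < \tfrac{1}{2}|x-y|$. In the first, classical interior gradient estimates for harmonic functions yield $|\nabla^k h(x,y)| \le C\, d(x, \partial D)^{-k}\|h(\cdot,y)\|_{L^\infty} \le C(D)|x-y|^{-k}$ directly from the $L^\infty$ bound on $h$. In the second, boundary Schauder estimates in flattened coordinates transfer the $|x-y|^{-k}$ behaviour of the restricted boundary datum $-\tfrac{1}{2\pi}\log|\cdot - y|$ to $h$, using that on a ball of radius $\tfrac{1}{2}|x-y|$ around $x$ this datum has $C^{k}$-norm $\le C |x-y|^{-k}$. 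The hypothesis $\partial D \in C^4$ is needed here in order to apply boundary Schauder estimates up to three derivatives.

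The main technical obstacle is the diagonal near-boundary case where $x$ and $y$ are simultaneously close to $\partial D$ and to one another; a bare application of the maximum principle produces a spurious $|\log d(y, \partial D)|$ blowup of $h$, which the flattening-plus-reflection argument removes at the cost of one derivative per reflection step. Since this proposition is entirely classical, the authors simply cite \cite{Marchioro, Trudinger} where these estimates are worked out in the stated form.
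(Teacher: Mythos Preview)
Your observation is correct: the paper does not prove this proposition but simply cites \cite{Marchioro,Trudinger} for these standard estimates, exactly as you say in your final sentence. Your sketch is a reasonable reconstruction of how the argument runs in those references, and the two-regime split (interior gradient estimates versus boundary Schauder after flattening) is the right structure.

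One imprecision worth flagging: the opening claim that the proposition ``reduces to proving $|h(x,y)| \le C(D)$'' is not literally true. When $y$ approaches $\partial D$ the boundary datum $-\tfrac{1}{2\pi}\log|\cdot - y|$ on $\partial D$ blows up like $|\log d(y,\partial D)|$, and by the maximum principle so does $\sup_x |h(x,y)|$. What your reflection argument actually delivers is that $h(x,y) + \tfrac{1}{2\pi}\log|x-\tilde y|$ is uniformly bounded (this is essentially the content of the paper's Proposition~\ref{estimatesfromxiao}), and since $|x-\tilde y| \ge c|x-y|$ for $x,y \in D$ near the boundary, one obtains $|h(x,y)| \le C(D)(|\log|x-y||+1)$ and $|\nabla^k h(x,y)| \le C(D)|x-y|^{-k}$, which is what the proposition asserts. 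Your argument already contains the needed ingredients; only the initial reduction is overstated.
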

   	

We will need a more detailed representation of the Green's function in the case where $x$ and $y$ are close to the
boundary $\partial D.$

   	

  First we define the symmetric reflection $\widetilde{y}$ with respect to $\partial D$ for some qualified $y$.
   	
   	\begin{definition}\label{definitionofystar}
   		Suppose $y \in D$, and there exists a unique nearest point to $y$ on $\partial D$, denoted by $e(y)$. We define $\widetilde{y} = 2e(y) - y$
to be the symmetric point of $y$ with respect to $\partial D$, and define the mapping $S: y \rightarrow \widetilde{y}$.
   	\end{definition}

   	
The first half of the following proposition is standard; see e.g. Proposition 14 in \cite{CT} for more details. The second half it is not difficult to verify and has been
proved in \cite{Xu}.

   	\begin{prop}\label{tubularneighborhood}
Let $D$ be a $C^k$ bounded domain, $k \geq 2.$ Define a tubular neighborhood $T(r)$ of $\partial D$ by $T(r)= \{y \in \mathbb{R}^{2}: d(y, \partial D) \leq r\}.$
There exists $r(D)>0$ such that if $r \leq r(D)$, then $\partial T(r)$ is $C^{k-1}$, and for any $y \in T(r)$ there exists a unique nearest point $e(y) \in \partial D.$

If $y \in T(r),$ the reflection $S(y) \equiv \widetilde{y} = 2e(y)-y$ is well defined and $C^{k-1}$ regular in all $T(r).$
   	\end{prop}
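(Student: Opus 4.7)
The plan is to derive both halves of the proposition from a single object, the normal exponential map along $\partial D$. Writing $n(p)$ for the outward unit normal on $\partial D$, I define
\[
\Psi : \partial D \times (-\rho,\rho) \to \mathbb{R}^{2}, \qquad \Psi(p,t) = p + t\,n(p).
\]
Because $\partial D$ is $C^{k}$, the Gauss map $p\mapsto n(p)$ is $C^{k-1}$ and hence $\Psi$ is $C^{k-1}$. Its differential at any $(p,0)$ sends the unit tangent vector to itself and $\partial_{t}$ to $n(p)$, so it is invertible, and the inverse function theorem supplies a local $C^{k-1}$ inverse near each $(p,0)$.

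To globalize, I would combine compactness of $\partial D$ with a quantitative curvature bound to choose $r(D)>0$ small enough that $\Psi$ is injective on $\partial D\times(-r(D),r(D))$ and is a $C^{k-1}$ diffeomorphism onto $T(r(D))$. The coordinate functions of $\Psi^{-1}$ are exactly the projection $p = e(y)$ and the signed distance $t = d_{s}(y)$, so both inherit $C^{k-1}$ regularity. Uniqueness of the nearest point follows from a standard variational observation: any closest boundary point $q$ must satisfy $y-q\in\mathbb{R}\,n(q)$, which, combined with the injectivity of $\Psi$, forces $q = e(y)$. For the boundary, $\partial T(r) = \Psi(\partial D\times\{r\})\cup\Psi(\partial D\times\{-r\})$ is $C^{k-1}$ because $\partial_{p}\Psi(p,\pm r) = (1 \mp r\kappa(p))\,\tau(p)$ is bounded away from $0$ for $r<r(D)$ small, which is precisely where the curvature control enters.

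For the second half, the reflection admits the clean expression $S = \Psi\circ R\circ\Psi^{-1}$ with $R(p,t) = (p,-t)$, which is consistent with the defining formula $S(y)=2e(y)-y$ since $e(y)$ and $-d_{s}(y)$ are the coordinates of $\widetilde{y}$. As $\Psi$ and $\Psi^{-1}$ are $C^{k-1}$ and $R$ is linear, the composition is $C^{k-1}$ on $T(r(D))$. The main technical obstacle, in my view, is the step of producing the uniform $r(D)$: local invertibility is free from the inverse function theorem, but global injectivity must rule out the possibility that two distant boundary points share a normal line meeting inside $T(r)$. I would handle this either by a compactness–contradiction argument (if not, one can extract convergent sequences of crossing normals and reach a contradiction with the local diffeomorphism property), or directly via the fact that on scales below $\|\kappa\|_{L^{\infty}(\partial D)}^{-1}$ neighboring normals fail to intersect. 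This is also the only place where the smoothness assumption $k\geq 2$ is genuinely used.
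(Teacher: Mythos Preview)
Your argument via the normal exponential map $\Psi(p,t)=p+t\,n(p)$ is correct and is exactly the standard route to the tubular neighborhood theorem; the identification $S=\Psi\circ R\circ\Psi^{-1}$ with $R(p,t)=(p,-t)$ is a clean way to read off the $C^{k-1}$ regularity of the reflection. The paper itself does not give a proof of this proposition: it simply remarks that the first half is standard and refers to Proposition~14 in \cite{CT}, and that the second half is not difficult and has been verified in \cite{Xu}. Your sketch is essentially what one would find behind those citations, so there is nothing to compare beyond noting that you have supplied the argument the paper chose to outsource.
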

 	

   Now we state the  estimate on $G_{D}(x,y)$, with $x, y$ close to $\partial D$. This representation is a minor variation of the result derived by Xu \cite{Xu};
we will provide a sketch of the argument in the appendix.	
   	
   \begin{prop}\label{estimatesfromxiao}
   		Suppose $D \subset \mathbb{R}^{2}$ is a $C^{4}$ bounded domain. There exists $r = r(D) > 0$ such that for any $x$, $y \in T(r)$, we have
   	    \begin{equation}
   	    G_{D}(x,y) = \frac{1}{2\pi}(\log|x - y| - \log|x -\widetilde{y}|) + B(x,y).
   	    \end{equation}
   	
   	    For any $\omega \in L^{\infty}(D)$, and $0 < \alpha < 1$, $B(x,y)$ satisfies
   	    \[
   	    \displaystyle\int_{T(r)\cap D}B(x,y)\omega(y)dy \in C^{2,\alpha}(T(r)).
   	    \]
   	    More precisely, we have
   	    \begin{equation}\label{c2alphaestimateofB}
   	    \norm{\displaystyle\int_{T(r)\cap D}B(x,y)\omega(y)dy}_{C^{2,\alpha}(T(r)\cap D)} \leq C(D)\norm{\omega}_{L^{\infty}}.
   	    \end{equation}
   \end{prop}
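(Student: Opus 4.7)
The plan is to define $B$ by subtracting an explicit ``reflected source'' from the regular part $h$ of $G_D$, exploit the resulting harmonicity of $B(\cdot,y)$ on $D$, and then reduce the integral estimate to a uniform $C^{2,\alpha}(\partial D)$ bound on the boundary trace of $B$ via the global Schauder estimate.

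Starting from the decomposition $G_D(x,y) = \frac{1}{2\pi}\log|x-y| + h(x,y)$ given in the excerpt, I would define
\[
B(x,y) := h(x,y) + \frac{1}{2\pi}\log|x-\widetilde{y}|, \qquad x\in D,\ y\in T(r)\cap D.
\]
Choosing $r = r(D)$ as in Proposition \ref{tubularneighborhood} guarantees $\widetilde{y} \in \mathbb{R}^{2}\setminus\overline{D}$, so $x\mapsto \log|x-\widetilde{y}|$ is harmonic on $D$; together with the harmonicity of $h(\cdot,y)$ this yields $\Delta_{x}B(\cdot,y) = 0$ on $D$, and the decomposition of $G_{D}$ stated in the proposition is immediate.

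Next I would estimate the boundary trace. For $x \in \partial D$, $h(x,y) = -\frac{1}{2\pi}\log|x-y|$, hence
\[
B(x,y)\big|_{\partial D} = \frac{1}{2\pi}\log\frac{|x-\widetilde{y}|}{|x-y|}.
\]
Writing $e := e(y)$, $d := d(y,\partial D)$, and $n := n(e)$, so that $y = e - dn$ and $\widetilde{y} = e + dn$, one has the key geometric identity
\[
|x-y|^{2} - |x-\widetilde{y}|^{2} = 4 d\,\langle x-e,\,n\rangle.
\]
Since $\partial D \in C^{4}$, the tangential function $x\mapsto \langle x-e, n\rangle$ on $\partial D$ vanishes to second order at $x=e$ and is $C^{3}$ in the remaining variables. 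Combined with the lower bound $|x-y|^{2} + |x-\widetilde{y}|^{2} \geq c(|x-e|^{2} + d^{2})$ and the $C^{3}$ regularity of the reflection $S$ from Proposition \ref{tubularneighborhood}, one obtains the uniform bound
\[
\norm{B(\cdot,y)}_{C^{2,\alpha}(\partial D)} \leq C(D), \qquad y \in T(r)\cap D,
\]
essentially by the computation in \cite{Xu}.

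Finally, since $D$ is $C^{2,\alpha}$, the global Schauder estimate for the Dirichlet problem for $\Delta$ upgrades this to $\norm{B(\cdot,y)}_{C^{2,\alpha}(\overline{D})} \leq C(D)$ uniformly in $y$. Differentiating twice in $x$ under the integral sign is then justified and yields
\[
\norm{\int_{T(r)\cap D} B(\cdot,y)\omega(y)\,dy}_{C^{2,\alpha}(T(r)\cap D)} \leq C(D)\,\norm{\omega}_{L^{\infty}},
\]
as required. The main obstacle is the uniform boundary-trace estimate: one has to verify that taking two tangential derivatives (plus a H\"older quotient) of $\log\frac{|x-\widetilde{y}|}{|x-y|}$ leaves bounds independent of $d = d(y,\partial D)$, which hinges on the cancellation $|x-y|^{2} - |x-\widetilde{y}|^{2} = O(d\,|x-e|^{2})$ surviving differentiation together with the $C^{3}$ smoothness of the reflection $S$ near $\partial D$.
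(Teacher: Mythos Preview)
There is a genuine gap: the uniform pointwise bound
\[
\norm{B(\cdot,y)}_{C^{2,\alpha}(\partial D)} \leq C(D)\quad\text{for all }y\in T(r)\cap D
\]
that you assert is \emph{false}. To see this, flatten $\partial D$ locally so that $e(y)=0$, the boundary is the graph $x_2=f(x_1)$ with $f(0)=f'(0)=0$, and $y=(0,d)$, $\widetilde{y}=(0,-d)$. Then for $x=(x_1,f(x_1))\in\partial D$ one has $|x-\widetilde{y}|^2-|x-y|^2=4d\,f(x_1)$, and since $f(x_1)=\tfrac{\kappa}{2}x_1^2+O(x_1^3)$,
\[
B(x,y)\big|_{\partial D}=\frac{1}{2\pi}\log\frac{|x-\widetilde{y}|}{|x-y|}
\approx \frac{\kappa d}{2\pi}\,\frac{x_1^2}{x_1^2+d^2}.
\]
The second tangential derivative of the right-hand side at $x_1=0$ equals $\dfrac{\kappa}{\pi d}$, which blows up as $d=d(y,\partial D)\to 0$. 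So your cancellation $|x-y|^2-|x-\widetilde{y}|^2=O(d\,|x-e|^2)$ is correct but does not survive two differentiations uniformly in $d$; it yields only a uniform $C^1$ bound, not $C^2$. Consequently the Schauder step you invoke cannot give a uniform $C^{2,\alpha}(\overline D)$ bound on $B(\cdot,y)$, and differentiating under the integral sign is not justified.

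The paper (following Xu) avoids this by never attempting a pointwise-in-$y$ bound. Instead one estimates directly the \emph{integrated} boundary trace
\[
\varphi(x)=\frac{1}{2\pi}\int_{T(r)\cap D}\log\frac{|x-\widetilde{y}|}{|x-y|}\,\omega(y)\,dy,\qquad x\in\partial D,
\]
and shows $\norm{\varphi}_{C^{2,\alpha}(\partial D)}\leq C(D)\norm{\omega}_{L^\infty}$; the $1/d$-type singularity in $\partial_x^2 B(x,y)$ is tamed by the $y$-integration. One then observes that $g(x):=\int_{T(r)\cap D}B(x,y)\omega(y)\,dy$ is harmonic in $D$ with boundary data $\varphi$, and applies the global Schauder estimate to $g$, not to $B(\cdot,y)$. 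Your overall architecture (harmonicity plus Schauder) is right, but the object to which you apply it must be the integral, not the kernel.
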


   It is not hard to observe that, for each $\epsilon > 0$, there exists $r(\epsilon) > 0$, such that for any $p \in \partial D$, we have
   \begin{equation}\label{UnitTangentVectorDifference}
   	|t_{1} - t_{2}| \leq \epsilon,
   \end{equation}
   where $t_{1}$, $t_{2}$ are two arbitrary unit tangent vectors to $\partial D \cap B_{r}(p)$.

   Indeed, since $\partial D$ is $C^{4}$, we denote by $r_D$ the inverse of the maximal curvature of $\partial D$. Then we can choose $r(\epsilon)$ to be any positive number less than $\dfrac{r_{D}\cdot\epsilon}{2}$.

   For convenience, throughout this section, we set $0 < r \leq r(\frac{1}{100})$ (we pick $\epsilon$ in (\ref{UnitTangentVectorDifference}) to be $\frac{1}{100}$) to be small enough such that both Proposition \ref{tubularneighborhood} and \ref{estimatesfromxiao} apply.


   With the above propositions, we begin by estimating $\norm{\nabla u(\cdot,t)}_{L^{\infty}(\mathbb{R}^{2})}$.

   \begin{prop}\label{gradientofvelocity}
   	Assume $u$ is given by the Biot-Savart law formula (\ref{BSL}) and $A_{\gamma}(t)$, $A_{\infty}(t)$, $A_{\inf}(t)$ are defined by (\ref{AAA}). Then we have
   	\begin{equation}\label{graduest}
   	\norm{\nabla u(\cdot,t)}_{L^{\infty}(\mathbb{R}^{2})} \:\leq\: C(D, \gamma)\Big(1+\log_{+}\frac{A_{\gamma}(t)}{A_{\inf}(t)} \Big),
   	\end{equation}
   where $\log_{+}(x) = \max\{\log x, 0\}$.
   \end{prop}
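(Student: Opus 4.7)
The plan is to bound $\nabla u(x,t) = \int_D \nabla_x\nabla_x^\perp G_D(x,y)\,\chi_{\Omega(t)}(y)\,dy$ by splitting $G_D(x,y) = \frac{1}{2\pi}\log|x-y| + h(x,y)$ into its singular Newtonian part and a harmonic correction. The Newtonian contribution is exactly the same singular integral as in the whole plane, and I would retrace the Chemin/Bertozzi-Constantin argument as carried out in \cite{Constantin,KRYZ}: cut the integral at a scale $\delta$, bound the far piece $\{|x-y|\geq\delta\}$ by $C(1+|\log\delta|)$ using $|\nabla_x\nabla_x^\perp\log|x-y||\leq C|x-y|^{-2}$, and rewrite the near piece via $\chi_\Omega = H(\varphi)$, integrating by parts along the tangent direction $w/|w|$ on $\partial\Omega$. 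The identity $\nabla\chi_\Omega = \delta_0(\varphi)\nabla\varphi$, the tangency of $w = \nabla^\perp\varphi$ to $\partial\Omega$, and the lower bound $|w|\geq A_{\inf}$ there allow one to trade one derivative for the Hölder modulus $A_\gamma$. Optimizing at $\delta\sim (A_{\inf}/A_\gamma)^{1/\gamma}$ produces the $(1+\log_+ A_\gamma/A_{\inf})$ bound.

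For the harmonic correction $\int_D h(x,y)\chi_\Omega(y)dy$, the goal is to show its second derivative in $x$ is bounded by $C(D)$. I would split the $y$-integration into $y\in T(r)\cap D$ and $y\in D\setminus T(r)$. On the latter, $|x-y|$ is bounded below by a constant depending on $r$ when $x\in T(r/2)\cap D$, while for $x\in D\setminus T(r/2)$ this same separation holds trivially; in both cases Proposition \ref{estimatesofgreenfunction} gives $|\nabla_x^2 G_D(x,y)|\leq C(D)r^{-2}$ and the integral is controlled pointwise. For $y\in T(r)\cap D$ together with $x\in T(r)\cap D$, I invoke Proposition \ref{estimatesfromxiao} to write
\[
h(x,y) = -\frac{1}{2\pi}\log|x-\tilde y| + B(x,y),
\]
so the $B$ contribution is bounded by $C(D)$ directly from (\ref{c2alphaestimateofB}). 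When $x\notin T(r)$ but $y\in T(r)\cap D$, again $|x-y|$ is bounded below and Proposition \ref{estimatesofgreenfunction} applies.

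What remains is the reflected Newtonian integral $-\frac{1}{2\pi}\nabla_x^2\int_{T(r)\cap D}\log|x-\tilde y|\,\chi_\Omega(y)\,dy$ for $x\in T(r)\cap D$. I would apply the change of variables $z = S(y)$, using that by Proposition \ref{tubularneighborhood} the reflection $S$ is a $C^3$ diffeomorphism of $T(r)\cap D$ onto $T(r)\cap D^c$ with Jacobian uniformly bounded away from $0$ and $\infty$. The expression becomes a Newtonian singular integral against $\chi_{\widehat\Omega}(z)J(z)$, where $\widehat\Omega := S(\Omega\cap T(r))\subset D^c$ inherits a $C^{1,\gamma}$ boundary and admits the pulled-back level set function $\widehat\varphi(z) := \varphi(S(z),t)$ whose $A_\gamma$ and $A_{\inf}$ differ from the originals only by multiplicative constants depending on $D$. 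The same Bertozzi-Constantin argument applied to this reflected integral yields a second logarithmic bound of the required form. This reflected step is the main obstacle: one must verify that the BC machinery is robust under the non-flat reflection, that $|x-z|$ (which can still be small when $x$ and $z$ lie close to the same point of $\partial D$, one on each side) is handled by the same near/far split, and that when $\partial\Omega$ touches $\partial D$ no spurious portion of $\partial\widehat\Omega$ is created. The $C^3$ regularity of $S$, ensured by the hypothesis $D\in C^4$, is exactly what allows the Hölder bounds to transfer cleanly through the reflection.
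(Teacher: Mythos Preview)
Your strategy matches the paper's: split $G_D$ into the Newtonian kernel and the harmonic remainder, handle the Newtonian piece via Bertozzi--Constantin, and near the boundary invoke Proposition~\ref{estimatesfromxiao} to isolate the reflected singularity $-\frac{1}{2\pi}\log|x-\widetilde y|$ plus the regular term $B$. There is, however, one genuine error in your treatment of the harmonic correction.

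You claim that when $y\in D\setminus T(r)$ and $x\in D\setminus T(r/2)$, ``this same separation holds trivially.'' It does not: both points lie in the interior of $D$ and can be arbitrarily close, so Proposition~\ref{estimatesofgreenfunction} gives no uniform pointwise bound on $|\nabla_x^2 G_D(x,y)|$ and you cannot control $\nabla_x^2 h$ this way. The correct argument, which the paper carries out, uses instead that $h(\cdot,y)$ is harmonic and that $x$ sits at distance $\geq r/2$ from $\partial D$: interior derivative estimates give $|\nabla_x^2 h(x,y)|\leq Cr^{-2}\sup_{z\in\partial D}|h(z,y)|$, and this boundary supremum is $\lesssim|\log r|$ when $y$ is itself at distance $\gtrsim r$ from $\partial D$. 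For $y$ close to $\partial D$ one applies the same interior estimate on the smaller domain $D\setminus T(3r/8)$ and bounds $|h|$ on its boundary via Proposition~\ref{estimatesofgreenfunction}. The point is that control of $h$ here comes from harmonicity and distance to $\partial D$, not from separation of $x$ and $y$.

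On the reflected piece, the paper uses a slightly cleaner device than re-running Bertozzi--Constantin with the pulled-back level set $\widehat\varphi$: after the change of variables one has $\int_{\widetilde\Omega_r}\frac{(x-y)^\perp}{|x-y|^2}F(y)\,dy$ with $F\in C^2$, and writing $F(y)=(F(y)-F(x))+F(x)$ reduces matters to the already-handled unweighted integral plus a term whose gradient has an $L^1$ integrand (since $|F(y)-F(x)|\leq C|x-y|$). Your route would also work but requires the additional observation, which the paper makes explicit, that the artificial corner where $\partial\Omega$ meets $\partial T(r)$ lies at distance $\geq r/2$ from any $x\in T(r/2)$ and can therefore be smoothed away at bounded cost.
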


   \begin{proof}[Proof of Proposition \ref{gradientofvelocity}]

   By the Biot-Savart law, we know that
   \[
   u(x,t) = \nabla^{\perp}\displaystyle\int_{D}G_{D}(x,y)\omega(y,t)dy.
   \]
   From Propositions \ref{estimatesofgreenfunction} and \ref{estimatesfromxiao}, it is natural to consider three cases: the inner part
$D\setminus T(r/2)$, the outer part $\mathbb{R}^{2} \setminus (T(r/2)\cup D)$ and the tubular neighborhood  $T(r)$.

   Now we analyze $u(x,t)$ in these three cases as follows. The constants in the estimates below may change from line to line.
Recall that without loss of generality $\|\omega\|_{L^\infty}=1.$

   \textbf{Case 1}: $x \in D \setminus T(r/2)$. By Proposition \ref{estimatesofgreenfunction}, $G_{D}(x,y) = \frac{1}{2\pi}\log|x-y| + h(x,y)$, thus
   \begin{equation}
   \begin{split}
   u(x,t) & =  \nabla^{\perp}\int_{\Omega(t)}G_{D}(x,y)dy\\
   &  = \nabla^{\perp}\int_{\Omega(t)}\frac{1}{2\pi}\log|x-y|dy + \nabla^{\perp}\int_{\Omega(t)}h(x,y)dy\\
   & := J_{1}(x,t) + J_{2}(x,t).
   \end{split}
   \end{equation}

   The estimate \eqref{graduest} for
 $\norm{\nabla J_{1}(\cdot,t)}_{L^{\infty}(D \setminus T(r/2))}$ has been done in Proposition 1 in \cite{Constantin}.\\

   To estimate $\norm{\nabla J_{2}(\cdot,t)}_{L^{\infty}(D \setminus T(r/2))}$, recall that $h(x,y)$ is a harmonic function solving
   \begin{equation}
   \Delta_{x}h = 0,  \quad h|_{x\in\partial D} = -\frac{1}{2\pi}\log|x-y|.
   \end{equation}

   By interior estimate of the derivatives of harmonic function (see e.g.
\cite{Evans}), we have for any $x \in D \setminus T(r/2)$, $y \in D \setminus T(r/4)$,
   \begin{equation}\label{derivativeofharmonic}
   \begin{split}
     |\nabla^{2}h(x,y)| & \leq \sup_{x \in D \setminus T(r/2)}|\nabla^{2}h(x,y)| \quad \quad \text{(fixed $y$)}\\
     & \leq Cr^{-2}\sup_{x \in \partial D}|h(x,y)|\\
     & \leq Cr^{-2}|\log(r/4)| \leq C(D),
   \end{split}
   \end{equation}
   the last inequality follows because $r$ depends only on $D$. For any $x \in D \setminus T(r/2)$ and $y \in D \cap T(r/4)$ we have,
using estimates on derivatives of harmonic functions and Proposition~\ref{estimatesofgreenfunction},
   \begin{equation}\label{derivativeofharmonic2}
   \begin{split}
   |\nabla^{2}h(x,y)| & \leq \sup_{x \in D \setminus T(r/2)}|\nabla^{2}h(x,y)| \quad \quad \text{(fixed $y$)}\\
   & \leq Cr^{-2}\sup_{x \in \partial (D \setminus T(3r/8))}|h(x,y)|\\
   & \leq \frac{1}{2\pi}Cr^{-2}|\log(r/8)| \leq C(D).
   \end{split}
   \end{equation}
   By (\ref{derivativeofharmonic}) and (\ref{derivativeofharmonic2}), we have
   \[
   \sup_{x \in D \setminus T(r/2), y \in D}|\nabla^{2}h(x,y)| \leq C(D)
   \]
for any $y \in D.$
   Therefore
   \[
   \norm{\nabla J_{2}(\cdot,t)}_{L^{\infty}(D \setminus T(r/2))} \leq C(D).
   \]

   \textbf{Case 2}: $x \in \mathbb{R}^{2} \setminus (T(r/2)\cup D)$. By Proposition \ref{estimatesofgreenfunction}, we have
   \[
   |\nabla^{2}G_{D}(x,y)| \leq C(D)|x-y|^{-2}.
   \]
   Thus
   \[
   |\nabla u(x,t)|  \leq  |\int_{\Omega(t)}\nabla^{2}G_{D}(x,y)dy| \leq C(D)r^{-2} \leq C(D)
   \]
   since $|x-y| \geq r/2$.\\

   \textbf{Case 3}: $x \in T(r/2)$. We have
   \begin{equation}
   \begin{split}
   u(x,t) & =  \nabla^{\perp}\int_{\Omega(t)}G_{D}(x,y)dy\\
   &  = \nabla^{\perp}\int_{\Omega(t)\cap T(r)}G_{D}(x,y)dy + \nabla^{\perp}\int_{\Omega(t)\cap T(r)^{c}}G_{D}(x,y)dy\\
   & := J_{3}(x,t) + J_{4}(x,t).
   \end{split}
   \end{equation}

   For $J_{4}(x,t)$, note that for $y \in \Omega(t)\cap T(r)^{c}$ and $x \in T(r/2)$, we have $|x - y| \geq r/2$. Similar to the estimate of Case 2,
we have $\norm{\nabla J_{4}(\cdot,t)}_{L^{\infty}(T(r/2))} \leq C(D)$. Now we turn to $J_{3}(x,t)$. By Proposition \ref{estimatesfromxiao}, we can rewrite $J_{3}(x,t)$ as

   \begin{equation}
   \begin{split}
   	J_{3}(x,t) & =  \frac{1}{2\pi}\int_{\Omega(t)\cap T(r)}\frac{(x-y)^{\perp}}{|x-y|^{2}}dy - \frac{1}{2\pi}\int_{\Omega(t)\cap T(r)}\frac{(x-\widetilde{y})^{\perp}}{|x-\widetilde{y}|^{2}}dy\\
   	& \quad + \int_{\Omega(t)\cap T(r)}\nabla_{x}^{\perp}B(x,y)dy\\
   	& :=  J_{31}(x,t) - J_{32}(x,t) + J_{33}(x,t).
   \end{split}
   \end{equation}

   By estimate (\ref{c2alphaestimateofB}) on $B(x,y)$, we have that $\norm{\nabla J_{33}(\cdot, t)}_{L^{\infty}(T(r/2))} \leq C(D)$.\\

   For $J_{31}$, we claim that
   \begin{equation}
   \norm{\nabla J_{31}(\cdot, t)}_{L^{\infty}(T(r/2))} \leq C(D,\gamma)\bigg(1 + \log_{+}\frac{A_{\gamma}(t)}{A_{\inf}(t)}\bigg).
   \end{equation}

Indeed, we claim that the argument of Proposition 1 in \cite{Constantin} can be used to control $J_{31}.$ The only issue one has to address is that
the region of integration in $J_{31}$ may have a corner created by intersection of $T(r)$ and $\Omega(t).$ But this difficulty is completely artificial
since $x \in T(r/2)$ is at a distance $r/2$ away from the possible location of the corner on $\partial T(r).$ We can simply smooth out the integration
region, and the error we would create by doing so is bounded from above by a constant.


 For the estimate of $J_{32}$, note that by Proposition \ref{tubularneighborhood}, $S(y)$ is a bijective $C^3$ mapping on $T(r).$
Let $F(y) = \big|\dfrac{\partial y}{\partial\widetilde{y}}\big|$ be the Jacobian of $S^{-1}$, and let $\widetilde{\Omega}_r(t)$ be the
image of $\Omega(t)\cap T(r)$ under the mapping $S(y)$ (recall $\widetilde{y}=S(y);$ we maintain this notation
redundancy for notational convenience).
It is not hard to check that $\widetilde{\Omega}_r(t) \subset T(r),$ simply by the definition of $S.$
Then we have

   \begin{equation}\label{transformedI12}
   J_{32}(x,t) = \frac{1}{2\pi}\displaystyle\int_{\widetilde{\Omega}_r(t)}\frac{(x-y)^{\perp}}{|x-y|^{2}}F(y)dy,
   \end{equation}

   where $F(y)$ is non-zero and $C^{2}$ in $T(r)$.

Since $S$ is $C^3,$ $\Omega(t)$ and $\widetilde{\Omega}_r(t)$ have the same regularity, so $J_{32}$ and $J_{31}$ are
quite similar. The only difference is that $J_{32}$ has a $C^2$ weight function $F(y)$.

   We rewrite $J_{32}$ as follows:

   \begin{equation}\label{34r}
	 J_{32}(x,t) =  \frac{1}{2\pi}\displaystyle\int_{\widetilde{\Omega}_r(t)}\frac{(x-y)^{\perp}}{|x-y|^{2}}(F(y)-F(x))dy +
\frac{1}{2\pi}\displaystyle\int_{\widetilde{\Omega}_r(t)}\frac{(x-y)^{\perp}}{|x-y|^{2}}F(x)dy.
   \end{equation}

The estimate of the second term in \eqref{34r} is identical to that of $J_{31}$ term. On the other hand, the derivatives of the first term
are bounded by constant since the expression under the integral is going to be $L^1.$


   \end{proof}

 Given the inequality (\ref{Agammaestimate}), we still need to derive a bound for $\norm{(\nabla u)w}_{\dot{C}^{\gamma}(\Omega(t))}$.  \\

   \begin{prop}\label{estimateofgradientuw}
   	Let $u$, $w$, $A_{\gamma}$, $A_{\inf}$ and $A_{\infty}$ be defined via (\ref{BSL}), (\ref{definitionOfW}) and (\ref{AAA}). Then we have
   	\begin{equation}\label{secondkey531}
   	\norm{(\nabla u)w}_{\dot{C}^{\gamma}(\Omega(t))} \leq C(D, \gamma)A_{\gamma}(t)\Big(1+\log_{+}\frac{A_{\gamma}(t)}{A_{\inf}(t)} \Big) + C(D, \gamma)A_{\infty}(t).
   	\end{equation}
   \end{prop}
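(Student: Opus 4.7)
The plan is to mirror the case decomposition of $u$ used in the proof of Proposition~\ref{gradientofvelocity}, estimating the $\dot{C}^{\gamma}$ seminorm of $(\nabla u)w$ piece by piece. For $x\in\Omega(t)\cap(D\setminus T(r/2))$ we use $u=J_{1}+J_{2}$, and for $x\in\Omega(t)\cap T(r/2)$ we use $u=(J_{31}-J_{32}+J_{33})+J_{4}$; since $w$ is defined only on $\Omega(t)$, these are the only cases to treat. For any bounded and Lipschitz contribution $K$ to $\nabla u$ we apply the elementary product estimate
\[
\norm{Kw}_{\dot{C}^{\gamma}(\Omega(t))} \leq \norm{K}_{L^{\infty}}A_{\gamma}(t) + \norm{K}_{\dot{C}^{\gamma}(\Omega(t))}A_{\infty}(t),
\]
while for genuine singular pieces we invoke the Constantin--Bertozzi cancellation identity.

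The smooth contributions from $J_{2}$, $J_{33}$, and $J_{4}$ are the easiest: the harmonic correction $h(x,y)$ is controlled by \eqref{derivativeofharmonic}--\eqref{derivativeofharmonic2}, the remainder $B(x,y)$ by \eqref{c2alphaestimateofB}, and $J_{4}$ enjoys $|x-y|\geq r/2$. In each case $\nabla u$ is bounded and Lipschitz in $x$ with constants depending only on $D$, producing a net $C(D)(A_{\gamma}(t)+A_{\infty}(t))$ contribution, which is absorbed into the right-hand side of \eqref{secondkey531}.

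The singular Cauchy pieces $J_{1}$ and $J_{31}$ carry the logarithmic term via the classical Constantin--Bertozzi trick. Converting $w(x)\cdot\nabla_{x}$ to $-w(x)\cdot\nabla_{y}$ on the Cauchy kernel, integrating by parts in $y$, and using that $w$ is divergence free with $w\cdot n=0$ on $\partial\Omega(t)$ yields
\[
w(x)\cdot\nabla_{x}\int_{\Omega(t)}\frac{(x-y)^{\perp}}{|x-y|^{2}}\,dy = -\int_{\partial\Omega(t)}\frac{(x-y)^{\perp}}{|x-y|^{2}}\bigl[w(x)-w(y)\bigr]\cdot n(y)\,d\sigma(y).
\]
The H\"older bound $|w(x)-w(y)|\leq A_{\gamma}(t)|x-y|^{\gamma}$ regularizes the kernel, and the $\dot{C}^{\gamma}(\Omega(t))$ seminorm of this boundary integral is bounded by $C(\gamma)A_{\gamma}(t)\bigl(1+\log_{+}(A_{\gamma}(t)/A_{\inf}(t))\bigr)$ exactly as in Proposition~2 in \cite{Constantin}. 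For $J_{31}$, the artificial part of the integration boundary on $\partial T(r)$ lies at distance $\geq r/2$ from $x$, so smoothing it out adds only $C(D)$, just as in the proof of Proposition~\ref{gradientofvelocity}.

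The reflected term $J_{32}$ is the main technical obstacle, since applying the chain rule through $S$ introduces a factor $(DS(y))^{-T}$ that breaks the exact tangency of $w$ at $\partial\Omega(t)$. We proceed via the change of variables $z=S(y)$ from \eqref{transformedI12}, writing $J_{32}(x,t) = \tfrac{1}{2\pi}\int_{\widetilde{\Omega}_{r}(t)} \frac{(x-z)^{\perp}}{|x-z|^{2}} F(z)\,dz$, and splitting $F(z)=F(x)+(F(z)-F(x))$. The $F(z)-F(x)$ piece gives an absolutely convergent integral whose gradient has $\dot{C}^{\gamma}$ seminorm bounded by $C(D)$ thanks to $F\in C^{2}$, contributing $C(D)(A_{\gamma}(t)+A_{\infty}(t))$. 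For the constant-weight piece we apply a Constantin--Bertozzi-type argument on the reflected patch $\widetilde{\Omega}_{r}(t)$, which is $C^{1,\gamma}$ with constants comparable to those of $\Omega(t)\cap T(r)$ because $S$ is a $C^{3}$ involution on $T(r)$, using the pushforward field $\widetilde w := (DS\cdot w)\circ S^{-1}$ to play the role of the tangent/cancellation vector; the non-vanishing divergence of $\widetilde w$ and the discrepancy $w(x)-\widetilde w(x)$ at the evaluation point produce only lower-order corrections controlled by $C(D)A_{\infty}(t)$. Executing this substitution carefully is where the main technical work lies, but conceptually the reflected term reduces to the same Cauchy singular-integral estimate as $J_{31}$ plus smooth remainders, yielding the bound \eqref{secondkey531}.
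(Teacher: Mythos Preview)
Your treatment of the smooth contributions ($J_{2}$, $J_{33}$, $J_{4}$) and of the direct Cauchy pieces ($J_{1}$, $J_{31}$) is correct and matches the paper. The genuine gap is in the reflected term $J_{32}$ (the paper's $I_{12}$).

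Your plan there is to write $(\nabla I_{12})w=(\nabla I_{12})\widetilde w+(\nabla I_{12})(w-\widetilde w)$, handle the first piece by Constantin--Bertozzi on the reflected patch, and dismiss the second as a lower-order correction ``controlled by $C(D)A_{\infty}$''. But this correction is \emph{not} lower order. For $x\in\partial D$ one has $S(x)=x$ and $DS(x)$ is the reflection $R$ across the tangent line, so with either your pushforward convention or the paper's cofactor convention, $w(x)-\widetilde w(x)$ equals $(I\pm R)w(x)$, which is twice the normal (resp.\ tangential) component of $w$ relative to $\partial D$; generically this is of size $|w|$, and it does not vanish as $x$ approaches $\partial\widetilde\Omega$. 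Hence the only available bound on the correction is the product estimate, whose second summand $\|\nabla I_{12}\|_{\dot C^{\gamma}}\|w-\widetilde w\|_{L^{\infty}}$ is uncontrolled: $\nabla I_{12}$ is a singular integral of a characteristic function, and its H\"older seminorm on $\Omega\cap T(r/2)$ blows up like $d(x)^{-\gamma}$ as $x\to\partial\widetilde\Omega$. The divergence correction you mention has the same defect.

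The paper handles $I_{12}$ by an entirely different mechanism that does not attempt a cancellation identity at all. It proves a \emph{pointwise} H\"older bound (Proposition~\ref{T2estimate})
\[
\frac{|\nabla I_{12}(x)-\nabla I_{12}(x')|}{|x-x'|^{\gamma}}\le C(D,\gamma)\Big(1+\log_{+}\tfrac{A_{\gamma}}{A_{\inf}}\Big)\min\Big\{\tfrac{A_{\gamma}}{|\widetilde w(P_{x})|},\,d(x)^{-\gamma}\Big\},
\]
via the Geometric Lemma~\ref{geometriclemma}, a comparison with the velocity of an osculating disk (Lemmas~\ref{secondgradient} and \ref{symmetricmagic}), and path integration through an auxiliary vertex (Lemmas~\ref{dgamma} and \ref{logestimate}). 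The degenerate factor on the right is then neutralized by the complementary smallness of $|w(x)|$: since $\widetilde P_{x}\in\partial\Omega$ lies within $3d(x)$ of $x$, one has $|w(x)|\le C|\widetilde w(P_{x})|+CA_{\gamma}d(x)^{\gamma}$, and multiplying the two bounds collapses the minimum to $CA_{\gamma}(1+\log_{+})$. This balancing between the H\"older blow-up of $\nabla I_{12}$ and the decay of $|w|$ near $\partial\widetilde\Omega$ is the real content of the $I_{12}$ estimate, and it is absent from your sketch.
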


   \begin{proof}[Proof of Proposition \ref{estimateofgradientuw}]
   Since $G_{D}(x,y)$ behaves differently depending on whether $x$, $y$ are close to the boundary or not, we consider two cases.

   \textbf{Case 1}: $x \in \Omega(t)\cap(D \setminus T(r/4))$. We have

   \begin{equation}
   \begin{split}
   u(x,t) & =  \nabla^{\perp}\int_{\Omega(t)}G_{D}(x,y)dy\\
   &  = \nabla^{\perp}\int_{\Omega(t)}\frac{1}{2\pi}\log|x-y|dy + \nabla^{\perp}\int_{\Omega(t)}h(x,y)dy\\
   & := J_{1}(x,t) + J_{2}(x,t).
   \end{split}
   \end{equation}

   $J_{1}$ can be regarded as the velocity generated by patch $\omega = \chi_{\Omega(t)}$ in $\mathbb{R}^{2}$. Note that by definition (\ref{definitionOfW}), $w$ is the vector field that is tangent to $\Omega(t)$. Thus by Corollary 1 in \cite{Constantin}, we have
   \[
     \norm{(\nabla J_{1})w}_{\dot{C}^{\gamma}(\Omega(t) \cap (D \setminus T(r/4)))} \leq \norm{(\nabla J_{1})w}_{\dot{C}^{\gamma}(\Omega(t))} \leq C(D, \gamma)A_{\gamma}\Big(1+\log_{+}\frac{A_{\gamma}(t)}{A_{\inf}(t)} \Big).
   \]

   To estimate $J_{2}$, note that by the argument identical to that used to derive (\ref{derivativeofharmonic})
and (\ref{derivativeofharmonic2}), we have
   \[
   \sup_{x \in D \setminus T(r/4),y \in D}|\nabla^{n}h(x,y)| \leq C(n,D).
   \]

   Then we calculate,
   \begin{equation}\label{absoluteestimate}
   \begin{aligned}
   	 \norm{(\nabla J_{2})w}_{\dot{C}^{\gamma}(\Omega(t) \cap (D \setminus T(r/4)))} & \leq \norm{\nabla J_{2}(\cdot,t)}_{L^{\infty}((D \setminus T(r/4)))}\norm{w(\cdot,t)}_{\dot{C}^{\gamma}(\Omega(t))}\\
   	& + \norm{w(\cdot,t)}_{L^{\infty}(\Omega(t))}\norm{\nabla J_{2}(\cdot,t)}_{\dot{C}^{\gamma}((D \setminus T(r/4)))}\\
   	& \leq C(D,\gamma)(A_{\gamma}(t) + A_{\infty}(t)).
   \end{aligned}
   \end{equation}

   \textbf{Case 2}: $x \in \Omega(t) \cap T(r/2)$. We write $u(x,t)$ as
   \begin{equation}\label{decompositionToI1I2}
   \begin{split}
   u(x,t) & =  \nabla^{\perp}\int_{\Omega(t)}G_{D}(x,y)dy\\
   &  = \nabla^{\perp}\int_{\Omega(t)\cap T(r)}G_{D}(x,y)dy + \nabla^{\perp}\int_{\Omega(t)\cap T(r)^{c}}G_{D}(x,y)dy\\
   & := I_{1}(x,t) + I_{2}(x,t).
   \end{split}
   \end{equation}
   For the estimate of $I_{2}(x,t)$, recall again that $G_{D}(x,y) = \frac{1}{2\pi}\log|x-y| + h(x,y)$, where $h(x,y)$ is harmonic in $x$. Note that when $y\in\Omega(t)\cap T(r)^{c}$ and $x\in T(r/2)$, we have $|x - y| \geq r/2$. So by Proposition~\ref{estimatesofgreenfunction}
we have $|\nabla^{3}G_{D}(x,y)| \leq C(D)|x-y|^{-3}$, and therefore $\norm{\nabla^{k} I_{2}(\cdot,t)}_{L^{\infty}(T(r/2))} \leq C(D)$, for $k = 1,2$. Then we obtain
   \[
     \norm{(\nabla I_{2})w}_{\dot{C}^{\gamma}(\Omega(t)\cap T(r/2)} \leq C(D,\gamma)(A_{\gamma}(t) + A_{\infty}(t)).
   \]

   Now we turn to $I_{1}(x,t)$. By Proposition \ref{estimatesfromxiao}, we can rewrite $I_{1}(x,t)$ as

   \begin{equation}\label{decompositionOfI1}
   \begin{split}
   I_{1}(x,t) & =  \frac{1}{2\pi}\int_{\Omega(t)\cap T(r)}\frac{(x-y)^{\perp}}{|x-y|^{2}}dy - \frac{1}{2\pi}\int_{\Omega(t)\cap T(r)}\frac{(x-\widetilde{y})^{\perp}}{|x-\widetilde{y}|^{2}}dy\\
   & \quad + \int_{\Omega(t)\cap T(r)}\nabla_{x}^{\perp}B(x,y)dy\\
   & :=  I_{11}(x,t) + I_{12}(x,t) + I_{13}(x,t).
   \end{split}
   \end{equation}

   First by the estimate (\ref{c2alphaestimateofB}) on $B(x,y)$, similarly to (\ref{absoluteestimate}), we have
   \[
     \norm{(\nabla I_{13})w}_{\dot{C}^{\gamma}(\Omega(t)\cap T(r/2))} \leq C(D,\gamma)(A_{\gamma}(t) + A_{\infty}(t)).
   \]

   The estimate of $\norm{(\nabla I_{11})w}_{\dot{C}^{\gamma}(\Omega(t)\cap T(r/2)}$ is the same as that of $J_{1}$, as this term can be regarded as generated by a patch in $\mathbb{R}^{2}$. Note that we still have the issue coming from the corner created by the intersection of $T(r)$ and $\Omega(t)$ in the region of integration in $I_{11}$. This difficulty is artificial since $x \in T(r/2)$ is at a distance $r/2$ away from the possible location of the corner on $\partial T(r).$ We can simply smooth out the integration
   region, and the error created by doing so is bounded from above by a constant.

   For the remaining $I_{12}$ term, similarly to \eqref{transformedI12}, we have
   \begin{equation}\label{definitionOfI12}
   I_{12}(x,t) = \frac{1}{2\pi}\displaystyle\int_{\widetilde{\Omega}(t)\cap T(r)}\frac{(x-y)^{\perp}}{|x-y|^{2}}F(y)dy,
   \end{equation}
with $F \in C^2.$ In the remainder of this section, we will demonstrate that
   \begin{equation}
   	\norm{(\nabla I_{12})w}_{\dot{C}^{\gamma}(\Omega \cap T(r/2))} \leq C(D, \gamma)A_{\gamma}(t)\Big(1+\log_{+}\frac{A_{\gamma}(t)}{A_{\inf}(t)} \Big).
   \end{equation}
   This would complete the proof. Indeed, note that the regions we consider in Cases 1 and 2 overlap, and therefore the H\"older estimate
in all $\Omega(t)$ follows by a simple argument using a bound on $\|\nabla u\|_{L^\infty}$ we proved earlier.
   \end{proof}

  All estimates we will show hold uniformly in time.
For this reason we will drop $t$ in the arguments of all functions for notational convenience.

  \begin{prop}\label{anulusestimate}
 	Let $I_{12}$, $\varphi$, $w$, $A_{\gamma}$, $A_{\inf}$ and $A_{\infty}$ be defined via (\ref{definitionOfI12}), (\ref{definitionOfVarphi}), (\ref{definitionOfW}) and (\ref{AAA}) respectively. We have
 	\[
 	\norm{(\nabla I_{12})w}_{\dot{C}^{\gamma}(\Omega \cap T(r/2))} \leq C(D, \gamma)A_{\gamma}\Big(1+\log_{+}\frac{A_{\gamma}}{A_{\inf}} \Big).
 	\]
  \end{prop}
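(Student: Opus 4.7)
My plan is to reduce Proposition~\ref{anulusestimate} to the standard Chemin-type estimate (Corollary~1 in \cite{Constantin}) on the reflected patch $\widetilde{\Omega}_r(t)$. The reduction requires two modifications: peeling off the $C^2$ Jacobian weight $F$ via the splitting $F(y) = F(x) + (F(y)-F(x))$, and transferring the tangency relation for $w$ from $\partial\Omega$ to $\partial\widetilde{\Omega}_r$ using the reflection $S$.

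I would begin by writing $I_{12}(x) = M(x) + R(x)$ with
\[
M(x) := \frac{F(x)}{2\pi}\int_{\widetilde{\Omega}_r(t)}\frac{(x-y)^\perp}{|x-y|^2}\,dy, \qquad R(x) := \frac{1}{2\pi}\int_{\widetilde{\Omega}_r(t)}\frac{(x-y)^\perp}{|x-y|^2}\bigl(F(y)-F(x)\bigr)dy.
\]
In $R$, the Lipschitz gain $|F(y)-F(x)|\leq\norm{F}_{C^1}|x-y|$ from the $C^2$ regularity of $F$ absorbs the kernel singularity after differentiation in $x$, so $\nabla R$ is uniformly bounded in $L^\infty(D)$ and in fact Lipschitz (using $F\in C^2$). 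A routine computation then gives $\norm{(\nabla R)w}_{\dot{C}^\gamma(\Omega\cap T(r/2))}\leq C(D,\gamma)(A_\gamma + A_\infty)$, which is absorbed into the desired bound. For the principal term $M$, the product rule together with the bounds on $F$ and $\nabla F$ reduces the task to estimating $\norm{(\nabla K)w}_{\dot{C}^\gamma(\Omega\cap T(r/2))}$, where $K(x):=\int_{\widetilde{\Omega}_r(t)}\frac{(x-y)^\perp}{|x-y|^2}\,dy$.

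Since $S$ is a $C^3$ diffeomorphism of $T(r)$, the set $\widetilde{\Omega}_r(t)$ is a $C^{1,\gamma}$ patch with boundary norm controlled by $\norm{\Omega(t)}_{C^{1,\gamma}}$, and the vector field $\widetilde w := \nabla^\perp(\varphi\circ S^{-1})$ is a $C^\gamma$ tangent field to $\partial\widetilde{\Omega}_r(t)$ whose $\dot{C}^\gamma$ seminorm, $L^\infty$ norm, and boundary infimum are all controlled by their $w$-counterparts (the constants depending only on $D$, through the $C^3$ norm of $S^{-1}$). Applying Corollary~1 of \cite{Constantin} to $K$ and $\widetilde w$ on $\widetilde{\Omega}_r(t)$---after smoothing the artificial corner of $\partial(\widetilde{\Omega}_r\cap T(r))$ along $\partial T(r)$ exactly as in the earlier $I_{11}$ argument, an operation whose error is bounded since any $x\in T(r/2)$ stays at distance $r/2$ from that corner---yields
\[
\norm{(\nabla K)\widetilde w}_{\dot{C}^\gamma(\widetilde{\Omega}_r(t))} \leq C(D,\gamma)\,A_\gamma\Bigl(1+\log_+\tfrac{A_\gamma}{A_{\inf}}\Bigr).
\]

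The main obstacle is transferring this bound, which lives on $\widetilde{\Omega}_r(t)\subset\mathbb{R}^2\setminus\overline{D}$, to the required bound on the opposite side $\Omega\cap T(r/2)\subset D$. The reflection $S$ furnishes a $C^3$ bi-Lipschitz identification between these two regions, with $|x-S(x)|\leq 2d(x,\partial D)$, and the pointwise comparison $|w(x)-\widetilde w(S(x))|\leq C(D)\,A_\gamma|x-S(x)|^\gamma$ follows from the $C^\gamma$ regularity of $w$ together with the $C^3$ regularity of $S$. Finally, since $\widetilde{\Omega}_r(t)\cap D=\emptyset$, the function $K$ is componentwise harmonic on $D$; the plan is to exploit interior harmonic regularity, combined with the reflection identification above, to transfer the $\dot{C}^\gamma$ control of $(\nabla K)\widetilde w$ across $\partial D$ from $\widetilde{\Omega}_r$ to $\Omega\cap T(r/2)$. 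This harmonic-comparison step across the boundary is expected to be the technical heart of the argument.
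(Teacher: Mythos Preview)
Your reduction $I_{12}=M+R$ and the handling of the remainder $R$ are essentially fine (a minor point: $\nabla R$ is not quite Lipschitz near $\partial D$, only log-Lipschitz, but that still yields uniform $\dot C^\gamma$ control for any $\gamma<1$). The genuine gap is in the last step.

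Corollary~1 of \cite{Constantin} controls $(\nabla K)\widetilde w$ in $\dot C^\gamma$ on the patch $\widetilde\Omega_r$ itself, i.e.\ \emph{inside} the reflected region. What you need is $(\nabla K)w$ in $\dot C^\gamma$ on $\Omega\cap T(r/2)$, which lies on the opposite side of $\partial D$ and in particular \emph{outside} $\widetilde\Omega_r$. Your proposed harmonic transfer cannot bridge this. The velocity gradient $\nabla K$ has an $O(1)$ jump across $\partial\widetilde\Omega_r$, so $\nabla K(x)$ and $\nabla K(S(x))$ are not comparable when $\Omega$ touches or nearly touches $\partial D$. The product $(\nabla K)\widetilde w$ is not harmonic, so cannot itself be extended. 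And interior harmonic estimates for $K$ on $D$ degenerate like $d(x,\partial D)^{-1}$ and, crucially, carry no information about the tangent field---the entire point being that $\nabla K$ alone is \emph{not} uniformly $\dot C^\gamma$; only its product with a tangent field is.

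The paper takes a different route and never invokes Corollary~1 of \cite{Constantin} for $I_{12}$. It works directly at points $x\in\Omega\cap T(r/2)$ outside $\widetilde\Omega_r$ and proves the pointwise bound $|\nabla^2 I_{12}(x)|\le C(D,\gamma)\,d(x)^{-1+\gamma}r_x^{-\gamma}$ (Lemma~\ref{secondgradient}), where $d(x)=\mathrm{dist}(x,\widetilde\Omega_r)$ and $r_x=(|\widetilde w(P_x)|/2\widetilde A_\gamma)^{1/\gamma}$ measures the flatness of $\partial\widetilde\Omega_r$ near the nearest point $P_x$. This comes from the Geometric Lemma (confining $\partial\widetilde\Omega_r$ to a cusp region $S_x$) together with an explicit comparison to the field of a disk of radius $r_x$. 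Integrating this second-derivative bound along triangular paths (Lemmas~\ref{dgamma}, \ref{logestimate}) yields
\[
\frac{|\nabla I_{12}(x)-\nabla I_{12}(x')|}{|x-x'|^\gamma}\le C(D,\gamma)\Bigl(1+\log_+\tfrac{A_\gamma}{A_{\inf}}\Bigr)\min\Bigl\{\tfrac{A_\gamma}{|\widetilde w(P_x)|},\,d(x)^{-\gamma}\Bigr\},
\]
which, paired with $|w(x)|\le C|\widetilde w(P_x)|+CA_\gamma d(x)^\gamma$, closes the estimate. This \emph{exterior} analogue of the Bertozzi--Constantin cancellation is the actual technical content here; it does not follow from the interior estimate by any soft harmonic argument.
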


  Before we give the proof, we need to introduce more notation. Recall that $\varphi$ and $w$ have not been defined outside
$\overline{\Omega}$.
We use Whitney-type extension theorem (see page 170 in \cite{Stein}) to extend $\varphi$ to be defined on $\mathbb{R}^{2}$ so that its
$C^{1,\gamma}$ norm increases at most by a universal factor $C(\gamma)$ depending only on $\gamma$. It is not hard to make sure that the
extension is negative outside of $\overline{\Omega}.$ We extend the definition $w(x) = (\nabla^{\perp}\varphi)(x)$ for $x
\notin \overline{\Omega}.$
Now we define $\widetilde{\varphi}$ to be
$\varphi \circ S$ and $\widetilde{w}$ to be $\nabla^{\perp}\widetilde{\varphi}$. Since the reflection $S$ is only defined on $T(r)$, so are
$\widetilde{\varphi}$ and $\widetilde{w}$. Recall the notation $\widetilde{\Omega}_r$ for $S(\Omega);$ in what follows we will omit the
subscript $r$ for notational convenience and write simply $\widetilde{\Omega}.$
Note that $\widetilde{\varphi}(x)$ vanishes on $\partial\widetilde{\Omega} \cap T(r)$, being
positive on $\widetilde{\Omega} \cap T(r)$ and non-positive elsewhere inside $T(r)$. We also have that $\widetilde{w}$ is tangent to
$\partial\widetilde{\Omega}$ inside $T(r)$.
Direct calculation shows that
  \begin{equation}\label{comparableW}
  	\widetilde{w}(x) = \text{Cofactor}(\nabla S(x)) \nabla^{\perp}\varphi(S(x))
  \end{equation}
  where $\text{Cofactor}(M)$ denotes the cofactor matrix of $M$. Since $S$ is a $C^{3}$ mapping with nonzero and finite Jacobian,
it is easy to check that
  \begin{equation}\label{ComparableWTildeX}
  	|w(\widetilde{x})| \leq C(D)|\widetilde{w}(x)|, \quad \forall x \in T(r),
  \end{equation}
  \begin{equation}\label{ComparableAgamma}
  	\norm{\widetilde{w}}_{\dot{C}^{\gamma}(T(r))} \leq C(D, \gamma) \norm{w}_{\dot{C}^{\gamma}(T(r))}.
  \end{equation}

  We let $d(x) := \text{dist}(x,\widetilde{\Omega})$, for any $x \in T(r) \setminus\widetilde{\Omega}$,  and let
$P_{x} \in \partial\widetilde{\Omega}$ be the point such that $d(x) = \text{dist}(x,P_{x})$ (if there are multiple such points,
we pick any one of them). We denote as usual $\widetilde{P}_{x}=S(P_x)$ the symmetric image of $P_{x}$ over the boundary of $D$
(see Figure \ref{fig1}).

  \begin{figure}[h]
  	\includegraphics[scale=0.5]{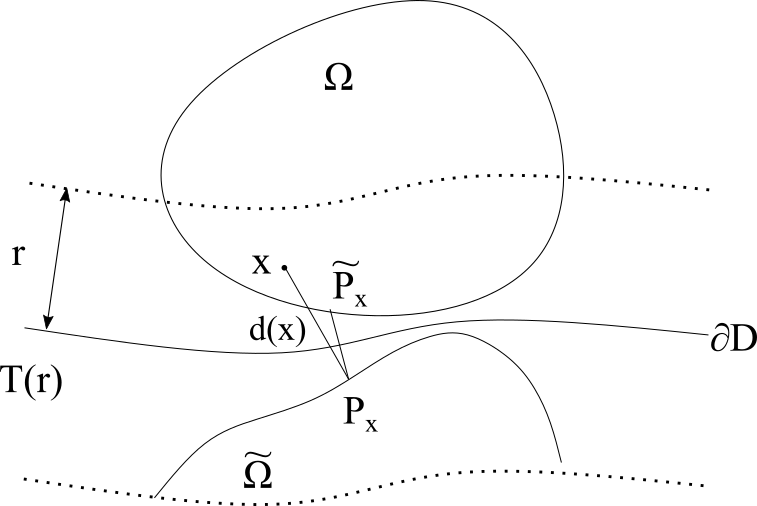}
  	\caption{}
  	\label{fig1}
  \end{figure}

  Consider any two points $x, x^{\prime} \in \Omega \cap T(r/2)$. Assume, without loss of generality, that $d(x) \leq d(x^{\prime})$.
With $g := (\nabla I_{12})w$, we have

  \begin{equation}\label{govergamma}
  \begin{split}
  \dfrac{|g(x)-g(x^{\prime})|}{|x-x^{\prime}|^{\gamma}} & \leq |\nabla I_{12}(x^{\prime})|\norm{w}_{\dot{C}^{\gamma}(\Omega \cap T(r/2))}\\
  & \quad + \dfrac{|\nabla I_{12}(x) - \nabla I_{12}(x^{\prime})|}{|x-x^{\prime}|^{\gamma}}|w(x)|.
  \end{split}
  \end{equation}

  By the argument identical to the one in Proposition \ref{gradientofvelocity}, we have that
  \[
  	\norm{\nabla I_{12}}_{L^{\infty}} \leq C(D, \gamma)\Big(1+\log_{+}\dfrac{A_{\gamma}}{A_{\inf}} \Big).
  \]

  Therefore the first term on the right hand side of (\ref{govergamma}) is bounded by
$C(D,\gamma)A_{\gamma}\Big(1+\log_{+}\dfrac{A_{\gamma}}{A_{\inf}} \Big)$. Hence it suffices to bound the second term.\\

  Note that
  \begin{equation}\label{omegapx}
  \begin{split}
  |w(x)| & \leq |w(\widetilde{P}_{x})|+|w(\widetilde{P}_{x})-w(x)|\\
  & \leq |w(\widetilde{P}_{x})| + C(\gamma)A_{\gamma}d(x)^{\gamma}.
  \end{split}
  \end{equation}
  The last inequality holds because we have $|x - \widetilde{P}_{x}| \leq 3d(x)$. Indeed by definition (see Figure \ref{fig1}),
we have $d(x) \geq \text{dist}(P_{x}, D)$, note that $\text{dist}(P_{x},D) = \text{dist}(\widetilde{P}_{x},D)$.
So we have $|P_{x} - \widetilde{P}_{x}| \leq 2d(x)$, therefore $|x - \widetilde{P}_{x}| \leq 3d(x)$.

  Now it remains to estimate $\dfrac{|\nabla I_{12}(x) - \nabla I_{12}(x^{\prime})|}{|x-x^{\prime}|^{\gamma}}$ in (\ref{govergamma}).
The following proposition provides us an appropriate estimate.

  \begin{prop}\label{T2estimate}
  	Let $I_{12}$, $A_{\gamma}$, $A_{\inf}$ and $A_{\infty}$ be defined via (\ref{definitionOfI12}) and (\ref{AAA}), and let $\widetilde{w}$, $\widetilde{\varphi}$ and $P_{x}$ be defined as in the paragraph below Proposition \ref{anulusestimate}. For $x,x^{\prime}\in \Omega \cap T(r/2)$, with $d(x) \leq d(x^{\prime})$, we have
  	\[
  	\frac{|\nabla I_{12}(x) - \nabla I_{12}(x^{\prime})|}{|x-x^{\prime}|^{\gamma}} \leq C(D,\gamma)\Big(1+\log_{+}\frac{A_{\gamma}}{A_{\inf}}\Big)\min\Big\{\frac{A_{\gamma}}{|\widetilde{w}(P_{x})|}, d(x)^{-\gamma}\Big\}.
  	\]
  \end{prop}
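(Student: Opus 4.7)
I would prove the two bounds appearing in the $\min$ separately; taking their minimum then gives the stated inequality. The logarithmic prefactor $1+\log_+(A_\gamma/A_{\inf})$ enters whenever I invoke the pointwise bound
\[
\norm{\nabla I_{12}}_{L^\infty(T(r/2))} \leq C(D,\gamma)\Big(1+\log_+\frac{A_\gamma}{A_{\inf}}\Big),
\]
obtained by running the argument used for $J_{31}$ in the proof of Proposition \ref{gradientofvelocity} verbatim on $I_{12}$.

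\textbf{The $d(x)^{-\gamma}$ bound.} I split according to the size of $|x-x'|$ relative to $d(x)$. If $|x-x'| \geq d(x)/2$, the triangle inequality and the $L^\infty$ bound above immediately give the estimate, since $|x-x'|^{-\gamma} \leq (2/d(x))^{\gamma}$. If $|x-x'| < d(x)/2$, every point $z$ of the segment $[x,x']$ has distance at least $d(x)/2$ from $\widetilde\Omega$, so polar coordinates centered at $z$ yield
\[
|\nabla^2 I_{12}(z)| \leq C(D)\int_{\widetilde\Omega \cap T(r)} \frac{dy}{|z-y|^{3}} \leq \frac{C(D)}{d(x)}.
\]
The mean value theorem then gives $|\nabla I_{12}(x)-\nabla I_{12}(x')|\leq C(D)|x-x'|/d(x)$, and combining with $|x-x'|^{1-\gamma}\leq d(x)^{1-\gamma}$ produces the $d(x)^{-\gamma}$ estimate (even without the logarithmic factor in this subcase).

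\textbf{The $A_\gamma/|\widetilde w(P_x)|$ bound.} This is the harder one, and it exploits the smoothness of $\partial\widetilde\Omega$ together with the tangency of $\widetilde w$ to it. I would introduce local coordinates near $P_x$ adapted to the foliation of $T(r)$ by level sets of $\widetilde\varphi$: writing $t=\widetilde\varphi$ and $\sigma$ for arc length along $\{\widetilde\varphi = t\}$, the coarea formula gives $dy = d\sigma\,dt/|\widetilde w(y)|$. In the ball around $P_x$ of radius $\rho \sim (|\widetilde w(P_x)|/A_\gamma)^{1/\gamma}$, the Hölder estimate $|\widetilde w(y)-\widetilde w(P_x)| \leq A_\gamma|y-P_x|^\gamma$ forces $|\widetilde w(y)| \geq |\widetilde w(P_x)|/2$, so the Jacobian $|\widetilde w|^{-1}$ is comparable to $|\widetilde w(P_x)|^{-1}$ and can be factored out. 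The remaining integral in $(\sigma, t)$ has exactly the structure handled by the Bertozzi--Constantin calculation for $J_{31}$, contributing a factor $A_\gamma(1+\log_+(A_\gamma/A_{\inf}))$. Outside the ball of radius $\rho$, $|x-y|$ is bounded below by a positive multiple of $\rho$ and the kernel $\nabla_x K(x-y)$ is no longer critical, so direct integration bounds this outer contribution by a constant multiple of $A_\gamma/|\widetilde w(P_x)|$.

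\textbf{Main difficulty.} The principal subtlety is calibrating the radius $\rho$: the region in which $|\widetilde w|$ is comparable to $|\widetilde w(P_x)|$ itself shrinks as $|\widetilde w(P_x)|$ decreases, and I must verify that the gain from extracting the factor $|\widetilde w(P_x)|^{-1}$ is not overwhelmed by the deterioration of the Bertozzi--Constantin estimate on this shrunken inner domain. This balancing is precisely what makes the minimum of the two bounds the sharp quantity: when $|\widetilde w(P_x)|$ is small the $d(x)^{-\gamma}$ bound dominates, whereas when $d(x)$ is small the $|\widetilde w(P_x)|^{-1}$ bound is the useful one.
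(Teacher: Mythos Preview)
Your $d(x)^{-\gamma}$ bound is fine and in fact simpler than the paper's Lemma \ref{dgamma}: splitting at $|x-x'|=d(x)/2$ and using the straight segment $[x,x']$ in the near case works, while the paper goes through an auxiliary point $x''$ (Lemma \ref{xx'x''Estimate}) in both regimes to avoid the logarithm in the far case. Since Proposition \ref{T2estimate} carries the logarithmic prefactor anyway, your shortcut is acceptable here.

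The gap is in the $A_\gamma/|\widetilde w(P_x)|$ bound. Your coarea manoeuvre does not produce the factor you claim. Writing $dy = d\sigma\,dt/|\widetilde w(y)|$ and replacing $|\widetilde w(y)|$ by $|\widetilde w(P_x)|$ on the ball of radius $\rho\sim r_x$ simply returns (up to a constant) the original integral $\int_{\widetilde\Omega\cap B_\rho}\nabla K(x-y)F(y)\,dy$; no $|\widetilde w(P_x)|^{-1}$ has been extracted. More importantly, the ``Bertozzi--Constantin calculation for $J_{31}$'' you invoke is the $L^\infty$ bound on $\nabla u$, not a H\"older bound, and the genuine Bertozzi--Constantin $\dot C^\gamma$ estimate (Corollary 1 of \cite{Constantin}) applies to $(\nabla u)w$ with $w$ tangent to the \emph{same} patch, whereas here you are estimating the bare H\"older quotient of $\nabla I_{12}$ over $\Omega$ while the integration domain is $\widetilde\Omega$. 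There is no tangent field available to trigger that cancellation.

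What the paper actually does is prove a pointwise second-derivative bound
\[
|\nabla^2 I_{12}(x)|\leq C(D,\gamma)\,d(x)^{-1+\gamma}r_x^{-\gamma},\qquad r_x=\Big(\tfrac{|\widetilde w(P_x)|}{2\widetilde A_\gamma}\Big)^{1/\gamma},
\]
when $d(x)\leq r_x/4$ (Lemma \ref{secondgradient}). This is obtained by comparing $\widetilde\Omega$ with a disk $\widetilde B_x$ of radius $r_x$ tangent to $\partial\widetilde\Omega$ at $P_x$: the Geometric Lemma \ref{geometriclemma} confines the symmetric difference $(\widetilde\Omega\triangle\widetilde B_x)\cap B_{r_x}(P_x)$ to a cusp-like set $S_x$ on which the singular kernel integrates to the displayed bound, while the disk contribution is handled by an explicit computation (Lemma \ref{symmetricmagic}) giving $|\nabla^2 u_{\widetilde B_x}(x)|\leq C/r_x$. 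With this $\nabla^2$ estimate in hand, one integrates along the detour path $x\to x''\to x'$ (so that $d(z_i(s))\gtrsim s|x-x'|$), and the integral $\int_0^1 (s|x-x'|)^{-1+\gamma}r_x^{-\gamma}\,ds$ produces exactly $C|x-x'|^{-1+\gamma}r_x^{-\gamma}$, hence the H\"older quotient $\leq C r_x^{-\gamma}\sim C A_\gamma/|\widetilde w(P_x)|$. The balancing you flag as the ``main difficulty'' is resolved precisely by this $d(x)^{-1+\gamma}r_x^{-\gamma}$ pointwise bound; your proposal does not supply a substitute for it.
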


Let us first prove Proposition \ref{anulusestimate} by assuming Proposition \ref{T2estimate}.

  \begin{proof}[Proof of Proposition \ref{anulusestimate}]
  	By (\ref{omegapx}), we know $|w(x)| \leq |w(\widetilde{P}_{x})| + C(\gamma)A_{\gamma}d(x)^{\gamma}$. By (\ref{ComparableWTildeX}), we have that for $x \in \Omega \cap T(r)$, $|w(\widetilde{P}_{x})| \leq C(D)|\widetilde{w}(P_{x})|$. Together with Proposition \ref{T2estimate}, we have
  	\begin{tabbing}
  		$\quad\dfrac{|\nabla I_{12}(x) - \nabla I_{12}(y)}{|x-y|^{\gamma}}|w(x)|$ \= $\leq$  $C(D,\gamma)A_{\gamma}d(x)^{\gamma}\Big(1+\log_{+}\dfrac{A_{\gamma}}{A_{\inf}}\Big)\min\Big\{\dfrac{A_{\gamma}}{|\widetilde{w}(P_{x})|}, d(x)^{-\gamma}\Big\}$\\
  		\> $\quad$ $+$ $C(D,\gamma)|\widetilde{w}(P_{x})|\Big(1+\log_{+}\dfrac{A_{\gamma}}{A_{\inf}}\Big)\min\Big\{\dfrac{A_{\gamma}}{|\widetilde{w}(P_{x})|}, d(x)^{-\gamma}\Big\}$\\
  		\> $\leq$ $C(D,\gamma)A_{\gamma}\Big(1+\log_{+}\dfrac{A_{\gamma}}{A_{\inf}}\Big).$
  	\end{tabbing}
  	
  \end{proof}

 In its turn, Proposition \ref{T2estimate} is a direct corollary of the following two lemmas that will be proved below.

  \begin{lemma}\label{dgamma}
  	In the same setting as in Proposition \ref{T2estimate}, for $x,x^{\prime}\in \Omega \cap T(r/2)$, with $d(x) \leq d(x^{\prime})$, we have
  	\[
  	\frac{|\nabla I_{12}(x) - \nabla I_{12}(x^{\prime})|}{|x-x^{\prime}|^{\gamma}} \leq C(D,\gamma)d(x)^{-\gamma}.
  	\]
  \end{lemma}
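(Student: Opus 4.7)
Write $K(z) := z^\perp/|z|^2$ for the Biot--Savart kernel, so $I_{12}(x) = \frac{1}{2\pi}\int_{\widetilde{\Omega} \cap T(r)} K(x-y) F(y)\,dy$ with $F \in C^2$ uniformly bounded on $T(r)$ by a constant depending only on $D$. Since $\Omega \cap T(r/2) \subset D$ and $\widetilde{\Omega} \subset D^c$, the integration region is separated from every $x \in \Omega \cap T(r/2)$ by distance at least $d(x)>0$; hence all derivatives of $I_{12}$ are well defined on $\Omega \cap T(r/2)$, with sizes controlled by polar integration in $\rho = |x-y|$ from the lower limit $d(x)$. I split the proof according to the size of $\eta := |x-x'|$ relative to $d(x)$.

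\emph{Case 1: $\eta \le d(x)/2$.} The segment $[x,x']$ lies in $B(x,d(x)/2)$, and every $z$ on this segment satisfies $d(z) \ge d(x)/2$, so $I_{12}$ is $C^\infty$ there. Differentiating twice under the integral and using $|\nabla^{2} K(z-y)| \le C|z-y|^{-3}$ yields
\[
|\nabla^{2} I_{12}(z)| \;\le\; C(D)\int_{d(z)}^{C(D)} \rho^{-2}\,d\rho \;\le\; \frac{C(D)}{d(z)} \;\le\; \frac{2C(D)}{d(x)}.
\]
The mean value inequality then gives $|\nabla I_{12}(x) - \nabla I_{12}(x')| \le 2C(D)\eta/d(x)$, and dividing by $\eta^\gamma$ together with $\eta \le d(x)/2$ produces the target $C(D,\gamma)d(x)^{-\gamma}$.

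\emph{Case 2: $\eta > d(x)/2$.} Decompose the integration domain into a near piece $A_{1} := \widetilde{\Omega} \cap T(r) \cap \{|x-y| \le 2\eta\}$ and a far piece $A_{2} := \widetilde{\Omega} \cap T(r) \cap \{|x-y| > 2\eta\}$. On $A_{1}$ I apply the triangle inequality together with $|\nabla K| \le C|\cdot|^{-2}$: since $|x-y| \ge d(x)$ and $|x'-y| \ge d(x') \ge d(x)$, polar integration bounds each of the two resulting pieces by $C(D)\log(\eta/d(x))$. On $A_{2}$, every $z \in [x,x']$ satisfies $|z-y| \ge |x-y|/2$, so the mean value theorem in $x$ together with $|\nabla^{2}K(z)| \le C|z|^{-3}$ gives $|\nabla K(x-y) - \nabla K(x'-y)| \le C\eta|x-y|^{-3}$, and polar integration from $2\eta$ outward contributes only $O(1)$. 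Altogether $|\nabla I_{12}(x) - \nabla I_{12}(x')| \le C(D)\bigl(1 + \log(\eta/d(x))\bigr)$. Setting $s := \eta/d(x) \ge 1/2$ and dividing by $\eta^\gamma = s^\gamma d(x)^\gamma$ yields $C(D)(1+\log s)s^{-\gamma}d(x)^{-\gamma}$; since $(1+\log s)s^{-\gamma}$ is bounded on $[1/2,\infty)$ by a constant depending only on $\gamma$, this is $\le C(D,\gamma)d(x)^{-\gamma}$.

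\emph{Expected difficulty.} The delicate point is that $|\nabla I_{12}|$ itself is not $L^\infty$-bounded independently of $d(x)$: a direct estimate yields only $|\nabla I_{12}(x)| \le C(D)(1 + \log(1/d(x)))$. Applying a naive triangle inequality to $\nabla I_{12}(x) - \nabla I_{12}(x')$ in Case 2 therefore leaves behind a spurious $\log(1/d(x))$ factor that $d(x)^{-\gamma}$ alone cannot absorb. The near--far decomposition resolves this: the extra derivative of the kernel on $A_{2}$ converts the divergent logarithm into an $O(1)$ contribution, while the remaining $\log(\eta/d(x))$ from $A_{1}$ is controlled by the elementary inequality $\log s \le C(\gamma) s^\gamma$ valid for $s \ge 1/2$, which yields exactly the desired power $d(x)^{-\gamma}$.
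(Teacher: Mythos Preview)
Your proof is correct. Both you and the paper reach the same intermediate estimate
\[
|\nabla I_{12}(x)-\nabla I_{12}(x')|\le C(D)\Bigl(1+\log\tfrac{|x-x'|}{d(x)}\Bigr)
\]
in the regime $|x-x'|\gtrsim d(x)$, and then close with the elementary bound $(1+\log s)s^{-\gamma}\le C(\gamma)$. The routes to that estimate, however, differ. The paper works \emph{pathwise}: it introduces a third point $x''$ forming an equilateral triangle with $x,x'$ (Lemma~\ref{xx'x''Estimate}), shows the broken path $x\to x''\to x'$ stays at distance $\gtrsim\max\{d(x),s|x-x'|\}$ from $\widetilde\Omega$, and then integrates the pointwise bound $|\nabla^2 I_{12}(z)|\le C\,d(z)^{-1}$ along that path. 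You instead work \emph{kernelwise}, splitting the $y$-integration into near ($|x-y|\le 2|x-x'|$) and far parts and treating them by triangle inequality and mean value, respectively; your Case~1 also avoids the triangle detour by choosing the threshold $d(x)/2$ so the straight segment suffices.

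Your argument is more self-contained for this lemma and bypasses the auxiliary geometric Lemma~\ref{xx'x''Estimate}. The paper's approach, on the other hand, is set up so that the same path-integration machinery carries over verbatim to the companion Lemma~\ref{logestimate}: there one simply replaces the crude bound $|\nabla^2 I_{12}(z)|\le C\,d(z)^{-1}$ by the sharper $|\nabla^2 I_{12}(z)|\le C\,d(z)^{-1+\gamma}r_z^{-\gamma}$ of Lemma~\ref{secondgradient} and integrates along the same triangle path. So the triangle construction is an investment that pays off in the next lemma, whereas your near--far split would need a separate adaptation there.
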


  \begin{lemma}\label{logestimate}
  	In the same setting as in Proposition \ref{T2estimate}, let $r_{x} := \Big(\dfrac{|\widetilde{w}(P_{x})|}{2\widetilde{A}_{\gamma}}\Big)^{\frac{1}{\gamma}}$, where $\widetilde{A}_{\gamma} := \norm{\widetilde{w}}_{\dot{C}^{\gamma}(T(r))}$. For $x,x^{\prime}\in \Omega \cap T(r/2)$, with $d(x) \leq \min\{d(x^{\prime}), 2^{-4-1/\gamma}r_{x}\}$, we have
  	\[
  	\frac{|\nabla I_{12}(x) - \nabla I_{12}(x^{\prime})|}{|x-x^{\prime}|^{\gamma}} \leq C(D,\gamma)\Big(1+\log_{+}\frac{A_{\gamma}}{A_{\inf}}\Big)\frac{A_{\gamma}}{|\widetilde{w}(P_{x})|}.
  	\]
  \end{lemma}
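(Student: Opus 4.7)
The integral $I_{12}(x) = \frac{1}{2\pi}\int_{\widetilde{\Omega}\cap T(r)} K(x-y)\,F(y)\,dy$ with $K(z) := z^\perp/|z|^2$ is non-singular for $x\in\Omega\subset D$, since $\widetilde{\Omega}$ lies on the opposite side of $\partial D$; in particular $\nabla I_{12}(x)$ is obtained by differentiating under the integral sign. The key geometric input is that the definition of $r_x$ together with $\widetilde{w}\in\dot{C}^\gamma$ yields
\[
|\widetilde{w}(y)-\widetilde{w}(P_x)| \leq \widetilde{A}_\gamma|y-P_x|^\gamma \leq \tfrac{1}{2}|\widetilde{w}(P_x)| \quad\text{for all } y\in B(P_x,r_x),
\]
so $|\widetilde{w}|\geq\tfrac12|\widetilde{w}(P_x)|$ in that ball and $\partial\widetilde{\Omega}\cap B(P_x,r_x)$ is a $C^{1,\gamma}$ graph in coordinates aligned with the unit normal $\widetilde{w}(P_x)^\perp/|\widetilde{w}(P_x)|$, with graph function having small $\dot C^{1,\gamma}$ seminorm in rescaled variables.

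I would then split into two regimes. \emph{Regime A:} $|x-x'|\geq r_x$. Here the crude estimate $\frac{|\nabla I_{12}(x)-\nabla I_{12}(x')|}{|x-x'|^\gamma} \leq \frac{2\|\nabla I_{12}\|_{L^\infty}}{r_x^\gamma}$ combined with the $L^\infty$ bound $\|\nabla I_{12}\|_{L^\infty} \leq C(D,\gamma)(1+\log_+\tfrac{A_\gamma}{A_\inf})$ (established by the same argument as for $J_{32}$ in the proof of Proposition \ref{gradientofvelocity}) and the identity $r_x^{-\gamma} = 2\widetilde{A}_\gamma/|\widetilde{w}(P_x)| \leq C(D,\gamma)A_\gamma/|\widetilde{w}(P_x)|$ (using \eqref{ComparableAgamma}) produces the claimed bound directly. \emph{Regime B:} $|x-x'|<r_x$. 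Split $\nabla I_{12}(x)-\nabla I_{12}(x')$ at radius $\rho := 4r_x$ around $P_x$. For the far part $\widetilde{\Omega}\setminus B(P_x,\rho)$, using $d(x)\leq 2^{-4-1/\gamma}r_x$ and $|x'-P_x|\leq |x-x'|+d(x)<2r_x$, one has $|z-y|\gtrsim |y-P_x|$ uniformly for $z$ on the segment $[x,x']$, so the mean value theorem gives $|\nabla K(x-y)-\nabla K(x'-y)|\leq C|x-x'||y-P_x|^{-3}$; integration over $|y-P_x|\in[4r_x,\mathrm{diam}\,T(r)]$ yields a total $\leq C|x-x'|/r_x$, whence $\leq Cr_x^{-\gamma}$ after dividing by $|x-x'|^\gamma$, again bounded by $C(D,\gamma)A_\gamma/|\widetilde{w}(P_x)|$.

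The near part over $\widetilde{\Omega}\cap B(P_x,\rho)$ is the technical core. I would perform a $C^{1,\gamma}$ change of variables that flattens $\partial\widetilde{\Omega}$ by introducing coordinates $(s,t)$, where $s$ is arclength along $\partial\widetilde{\Omega}$ from $P_x$ and $t := \widetilde{\varphi}(y)/|\widetilde{w}(P_x)|$. Since $|\nabla\widetilde{\varphi}|$ is comparable to $|\widetilde{w}(P_x)|$ in $B(P_x,\rho)$, this map is a bilipschitz $C^{1,\gamma}$ diffeomorphism sending $\widetilde{\Omega}\cap B(P_x,\rho)$ to essentially a half-disk with flat diameter. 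In these coordinates the integrand becomes a Biot–Savart--type singular kernel over a $C^{1,\gamma}$ half-plane--like region with a $C^\gamma$ weight, to which the classical Constantin–Bertozzi Hölder estimate (as used in \cite{Constantin} and \cite{KRYZ}) applies and produces the $1+\log_+(A_\gamma/A_\inf)$ factor through the usual scale-optimization in the singular integral. Unpacking the change of variables, the Jacobian $|\det\nabla(s,t)|^{-1}\sim|\widetilde{w}(P_x)|^{-1}$ transports the estimate back and contributes precisely the factor $1/|\widetilde{w}(P_x)|$ we need.

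The main obstacle is this near-part estimate: one must carefully verify that the Jacobian produces the $|\widetilde{w}(P_x)|^{-1}$ with the global $A_\gamma$ (not a localized version), and that the logarithmic factor emerges in the global form $1+\log_+(A_\gamma/A_\inf)$ via the scale optimization after rescaling. A minor technical point is the spurious corner created where $\partial B(P_x,\rho)$ meets $\partial\widetilde{\Omega}$ in the region of integration; this contributes only a bounded error after smoothing, by the same argument used for the analogous corner in $J_{31}$ in the proof of Proposition \ref{gradientofvelocity}.
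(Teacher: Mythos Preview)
Your Regime A argument and the far-part estimate in Regime B are correct and parallel the paper's Case 1; the paper also disposes of large $|x-x'|$ by invoking the $L^\infty$ bound on $\nabla I_{12}$ together with $r_x^{-\gamma}=2\widetilde{A}_\gamma/|\widetilde w(P_x)|\leq C(D,\gamma)A_\gamma/|\widetilde w(P_x)|$.

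The near-part argument, however, has a real gap. The Constantin--Bertozzi H\"older estimate controls $(\nabla u)w$ with $w$ tangent to $\partial\Omega$, not $\nabla u$ itself; here you need a $\dot C^\gamma$ bound on $\nabla I_{12}$ at points \emph{outside} $\widetilde\Omega$, without any tangential factor to absorb the bad normal direction. Your proposed $C^{1,\gamma}$ flattening does not repair this: under a $C^{1,\gamma}$ diffeomorphism $\Psi$ the kernel becomes $\nabla K(\Psi^{-1}(\xi)-\Psi^{-1}(\eta))$, which is \emph{not} of the form $\nabla K(\xi-\eta)$ plus a harmless error. Expanding $\Psi^{-1}(\xi)-\Psi^{-1}(\eta)=D\Psi^{-1}(\xi)(\xi-\eta)+O(|\xi-\eta|^{1+\gamma})$ leaves a main term with an $\xi$-dependent linear distortion $A(\xi)=D\Psi^{-1}(\xi)$, and since $\nabla K$ is not invariant under general linear maps the resulting integral over a half-space is not the standard object to which \cite{Constantin} applies. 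The $O(|\xi-\eta|^{-2+\gamma})$ remainder kernel is also not automatically $\dot C^\gamma$-mapping on characteristic functions without further structure. Finally, the logarithm in the statement does \emph{not} come from the near part: after rescaling to unit geometry there is no large parameter left, so the near contribution should be $\leq C(D,\gamma)r_x^{-\gamma}$; the log enters only through Regime A.

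The paper takes a different and more direct route for small $|x-x'|$. It proves a pointwise second-derivative estimate (Lemma~\ref{secondgradient}),
\[
|\nabla^2 I_{12}(z)|\leq C(D,\gamma)\,d(z)^{-1+\gamma}r_z^{-\gamma}\qquad\text{whenever }d(z)\leq \tfrac14 r_z,
\]
by comparing $\widetilde\Omega$ with a disk $\widetilde B_x$ of radius $r_x$ tangent to $\partial\widetilde\Omega$ at $P_x$: the geometric Lemma~\ref{geometriclemma} confines $(\widetilde\Omega\triangle\widetilde B_x)\cap B_x$ to the cusp $S_x$, the integral of $|x-y|^{-3}$ over $S_x$ produces the $d(x)^{-1+\gamma}r_x^{-\gamma}$ factor, and $\nabla^2 u_{\widetilde B_x}$ is bounded by $C/r_x$ via an explicit calculation (Lemma~\ref{symmetricmagic}). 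One then checks that along the triangular path $x\to x''\to x'$ of Lemma~\ref{xx'x''Estimate} every intermediate point $z_i(s)$ satisfies $d(z_i(s))\leq\tfrac14 r_{z_i(s)}$ and $r_{z_i(s)}\geq 2^{-1/\gamma}r_x$, so integrating $|\nabla^2 I_{12}(z_i(s))|\leq C(s|x-x'|)^{-1+\gamma}r_x^{-\gamma}$ in $s\in[0,1]$ gives the bound $C(D,\gamma)r_x^{-\gamma}$ directly. This replaces your change-of-variables step entirely and requires no appeal to \cite{Constantin} beyond the $L^\infty$ bound already used in Regime A.
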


Indeed, here is the proof of Proposition \ref{T2estimate} by using Lemma \ref{dgamma} and Lemma \ref{logestimate}.
 \begin{proof}[Proof of Proposition \ref{T2estimate}]
  	Recall that (\ref{ComparableAgamma}) implies
  	\begin{equation}\label{ComparableAgammaAndA}
  		\widetilde{A}_{\gamma} \leq C(D,\gamma) A_{\gamma}.
  	\end{equation}
  	
  	Due to Lemma \ref{dgamma}, we only need to consider the case where
  	\[
  	d(x) \leq \widetilde{C}^{-1}\Big(\frac{|\widetilde{w}(P_{x})|}{A_{\gamma}}\Big)^{\frac{1}{\gamma}}.
  	\]
  	We pick the constant $\widetilde{C} = 16(4C(D,\gamma))^{1/\gamma}$, where $C(D, \gamma)$ is the same as that in (\ref{ComparableAgammaAndA}).
Note that by the choice of $\widetilde{C}$, we have $d(x) \leq 2^{-4-1/\gamma}r_{x}$. Then Lemma \ref{logestimate} completes the proof.
  \end{proof}

  Now it is left to prove Lemma \ref{dgamma} and Lemma \ref{logestimate}. Let us remark that, without loss of generality,
we can always assume that $|x-x^{\prime}|$ and $d(x)$ are sufficiently small. Indeed, the estimates we are working on are of H\"older type, and if
$|x-x^{\prime}|$ exceeds some small constant that may only depend on $D$ or $\gamma$ then these estimates follow easily from the bounds similar
to the one on $\|\nabla u\|_{L^\infty}$ we already established. On the other hand, if $d(x) \geq C>0,$ then in \eqref{definitionOfI12} we have
$|x-y| \geq C$ for all $y$. Then $I_{12}$ will be smooth, and the estimates on $\nabla I_{12}$ can be obtained without any effort.

We begin with an auxiliary claim that will be used in the
proofs of Lemma \ref{dgamma} and Lemma \ref{logestimate}. Consider any $x, x' \in \Omega\cap T(r)$, with $|x-x'| \leq r,$ $d(x) \leq d(x')$.
Given any point $z,$ denote $Q_z$ the point on $\partial D$ closest to $z.$  Let $x''$
be such that $x, x', x''$ form an equilateral triangle. Obviously, there are two possible choices of $x''$ and we choose the point which is further
away from a line tangent to $\partial D$ passing through $Q_x.$
  \begin{lemma}\label{xx'x''Estimate}
  Consider the points $x,$ $x'$ and $x''$ as above.
	 We parametrize the segments $[xx^{\prime\prime}]$ and $[x^{\prime}x^{\prime\prime}]$ by
	\[
	z_{1}(s) = x + s(x^{\prime\prime} - x),
	\]
	\[
	z_{2}(s) = x^{\prime} + s(x^{\prime\prime} - x^{\prime}),
	\]
$0 \leq s \leq 1.$
	Then there exists a universal constant $C > 0$, such that
	\begin{equation}\label{xx''Estimate}
	d(z_{1}(s)) \geq \frac13 \max\{d(x), s|x-x^{\prime}|\},
	\end{equation}
	\begin{equation}\label{x'x''Estimate}
	d(z_{2}(s)) \geq \frac13 \max\{d(x), s|x-x^{\prime}|\}.
	\end{equation}
  \end{lemma}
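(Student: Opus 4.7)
The segments $[xx'']$ and $[x'x'']$ lie inside $D$ (after we verify this), and the key idea is to reduce bounds on $d(\cdot)=\mathrm{dist}(\cdot,\widetilde{\Omega})$ to bounds on the simpler quantity $\delta(\cdot):=\mathrm{dist}(\cdot,\partial D)$. Since $\widetilde{\Omega}\subset T(r)\setminus D$, any segment from $p\in\overline{D}$ to a point in $\widetilde{\Omega}$ must cross $\partial D$, giving $d(p)\geq\delta(p)$. Conversely, for $p\in\Omega\cap T(r)$ one has $S(p)\in\widetilde{\Omega}$ and $|p-S(p)|=2\delta(p)$, hence $d(p)\leq 2\delta(p)$. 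Thus $\delta\leq d\leq 2\delta$ on $\Omega\cap T(r)$, and it suffices to prove $\delta(z_i(s))\geq c\max\{\delta(x),s|x-x'|\}$ for a universal $c$, after which the claimed inequalities follow with the constant $1/3$ (taking $r$ small enough that the curvature corrections absorb into the constant).

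\textbf{Setup and formula for $v_{x''}$.} Introduce Euclidean coordinates $(u,v)$ centered at $Q_x$ with $v$-axis along the inward normal to $\partial D$. Since $r\leq r(D,1/100)$ and $\partial D\in C^{4}$, the boundary is locally a graph $v=\phi(u)$ with $\phi(0)=\phi'(0)=0$ and $|\phi''|\leq C(D)$, and throughout $T(r)$ we have $\delta(p)=v_p+O(r^2/r_D)$. We therefore work as if $\delta(p)=v_p$ and note at the end that the corrections are absorbed into the universal constant. The ordering $d(x)\leq d(x')$ together with $\delta\leq d\leq 2\delta$ yields $v_{x'}\geq v_x/2$. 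Writing $(x'-x)/|x-x'|=(\cos\alpha,\sin\alpha)$, the chosen ``higher'' vertex of the equilateral triangle satisfies
\[
v_{x''}=\tfrac{v_x+v_{x'}}{2}+\tfrac{\sqrt{3}}{2}|x-x'|\,|\cos\alpha|\geq\tfrac{3}{4}v_x.
\]

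\textbf{Main case analysis and conclusion.} The essential difficulty is that when $|\cos\alpha|\approx 0$ (segment $xx'$ nearly normal to $\partial D$) the equilateral construction gives $x''$ no extra height over the midpoint, so one must extract the gain from the ordering $d(x)\leq d(x')$ instead. I split into three cases. (i) If $|\cos\alpha|\geq 1/\sqrt{2}$, then directly $v_{x''}\geq \tfrac{3v_x}{4}+\tfrac{\sqrt{3}}{2\sqrt{2}}|x-x'|$. (ii) If $|\cos\alpha|<1/\sqrt{2}$ and $v_{x'}\geq v_x$, then $|\sin\alpha|>1/\sqrt{2}$ forces $v_{x'}-v_x\geq|x-x'|/\sqrt{2}$, and again $v_{x''}\geq v_x+c|x-x'|$. (iii) If $|\cos\alpha|<1/\sqrt{2}$ and $v_{x'}<v_x$, then $v_x-v_{x'}\geq|x-x'|/\sqrt{2}$ while $v_{x'}\geq v_x/2$; these force $|x-x'|\leq v_x/\sqrt{2}<v_x$, so $s|x-x'|<v_x$ and the bound $v_{x''}\geq \tfrac{3v_x}{4}$ already dominates $s|x-x'|$. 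Since $v_{z_1(s)}=(1-s)v_x+sv_{x''}$ and $v_{z_2(s)}=(1-s)v_{x'}+sv_{x''}$ are affine in $s$, each of the three cases immediately gives $v_{z_i(s)}\geq c\max\{v_x,s|x-x'|\}$ for a universal $c$. In particular $v_{z_i(s)}>0$, so $z_i(s)\in D$, justifying the reduction in Paragraph 1, and translating back through $\delta\leq d\leq 2\delta$ yields (\ref{xx''Estimate})--(\ref{x'x''Estimate}) with the constant $1/3$ once $r$ is taken small enough in terms of $D$ to absorb the curvature corrections.
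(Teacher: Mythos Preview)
Your proof is correct and follows the same approach as the paper: reduce $d(\cdot)$ to $\delta(\cdot)=\mathrm{dist}(\cdot,\partial D)$ via the equivalence $\delta\leq d\leq 2\delta$, then argue geometrically in local coordinates centered at $Q_x$. Your three-case analysis (splitting on $|\cos\alpha|$ and on the sign of $v_{x'}-v_x$) is more explicit than the paper's, which simply asserts that the angle $\beta$ between the inward normal and $\overrightarrow{xx''}$ satisfies $\cos\beta\geq\tfrac12$ and reads off $(z_1(s))_2\geq d_x+\tfrac12 s|x-x'|$; in fact that intermediate claim is not literally true in your case~(iii) (e.g.\ when $x'$ lies below $x$ along the normal through $Q_x$), so your treatment is the more careful one, while both arguments handle the curvature corrections at the same informal level.
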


  \begin{proof}[Proof of Lemma \ref{xx'x''Estimate}]
	
 Let us consider (\ref{xx''Estimate}). 
 Observe that for any $z \in \Omega\cap T(r)$, we have
	\begin{equation}\label{d(z)d_zEquivalent}
	d_{z} \leq d(z) \leq 2d_{z}.
	\end{equation}
	The second inequality is due to $\widetilde{z} \in \widetilde{\Omega}$ and $\text{dist}(z, \widetilde{z}) = 2d_{z}$.
	
Choose local coordinates $(p_1,p_2)$ with center at $Q_x$ and $p_1$ directed along the tangent at $Q_x$ and towards $Q_{x'}$ (if $Q_{x'} = Q_x$ then the estimate is immediate).
Denote $\beta$ the angle between $p_2$ axis and $xx''$ directed interval.
Due to our choice of $\epsilon$ in \eqref{UnitTangentVectorDifference}, definition of $r(\epsilon)$ and the choice of $r \leq r(\epsilon,$ elementary geometric considerations show
that $\cos \beta \geq \frac12.$ Therefore, the second coordinate of $z_1(s)$ satisfies $(z_1(s))_2 \geq d_x + \frac12 s |x-x'|.$ Using again the control over $\partial D$
over scale $r$ afforded by our choice of $\epsilon$ and \eqref{d(z)d_zEquivalent}, it is not hard to pass from the last estimate to \eqref{xx''Estimate}.

The case of \eqref{x'x''Estimate} is similar. We leave details to the interested reader.

  \end{proof}

 Now we prove Lemma \ref{dgamma}.

  \begin{proof}[Proof of Lemma \ref{dgamma}]
  	We consider two cases.
  	
  	\textbf{Case 1}: $|x-x^{\prime}| \leq d(x)$. By mean value theorem, for any $z, z^{\prime}$ such that the segment $[zz^{\prime}]$ is at a positive distance from $\widetilde{\Omega}$, we have
  	\[
  	\frac{|\nabla I_{12}(z) - \nabla I_{12}(z^{\prime})|}{|z-z^{\prime}|^{\gamma}} \leq |\nabla^{2} I_{12}(Z_{xx^{\prime}})||z-z^{\prime}|^{1-\gamma},
  	\]
  	for some point $Z_{zz^{\prime}}$ on the segment $[zz^{\prime}]$.
  	
  	Note that for any point $Z \not\in\widetilde{\Omega}$, we have
  	\begin{equation}\label{doubleGradientOfI12}
  	|\nabla^{2} I_{12}(Z)| \leq \int_{\mathbb{R}^{2}\setminus B_{d(Z)}(Z)}\frac{C(D)}{|Z-x|^{3}}dx \leq C(D)d(Z)^{-1}.
  	\end{equation}

  	Recall that if we pick $x^{\prime\prime}$ as we did in Lemma \ref{xx'x''Estimate}, then we have $d(Z_{xx^{\prime}}), d(Z_{x^\prime x^{\prime\prime}}) \geq \frac12 d(x)$.
  	
  	Then we have
  	\[
  	\frac{|\nabla\ I_{12}(x) - \nabla I_{12}(x^{\prime\prime})|}{|x-x^{\prime\prime}|^{\gamma}} \leq C(D)d(Z_{xx^{\prime\prime}})^{-1}|x-x^{\prime\prime}|^{1-\gamma} \leq C(D)d(x)^{-1}|x-x^{\prime\prime}|^{1-\gamma} \leq C(D)d(x)^{-\gamma},
  	\]
  	\[
  	\frac{|\nabla\ I_{12}(x^{\prime}) - \nabla I_{12}(x^{\prime\prime})|}{|x^{\prime}-x^{\prime\prime}|^{\gamma}} \leq C(D)d(Z_{x^{\prime}x^{\prime\prime}})^{-1}|x^{\prime}-x^{\prime\prime}|^{1-\gamma} \leq C(D)d(x)^{-1}|x^{\prime}-x^{\prime\prime}|^{1-\gamma} \leq C(D)d(x)^{-\gamma},
  	\]
  	The last inequalities hold true because we have $|x-x^{\prime}| \leq d(x)$.
  	
  	Putting them together, we have
  	\[
  		\frac{|\nabla\ I_{12}(x) - \nabla I_{12}(x^{\prime})|}{|x-x^{\prime}|^{\gamma}} \leq C(D)d(x)^{-\gamma}.
  	\]

  	\textbf{Case 2}: $|x-x^{\prime}| \geq d(x)$. Recall that if we define $x^{\prime\prime}$, $z_1(s)$ and $z_2(s)$ as in Lemma \ref{xx'x''Estimate}, then we have
  	\[
  		d(z_{1}(s)) \geq \frac12 \max\{d(x), s|x-x^{\prime}|\},
	\]
	\[
  		d(z_{2}(s)) \geq \frac12 \max\{d(x), s|x-x^{\prime}|\}.
  	\]
    Integrating along the path $x \rightarrow x^{\prime\prime} \rightarrow x^{\prime}$ yields
  	\begin{tabbing}
  		$\qquad |\nabla I_{12}(x) - \nabla I_{12}(x^{\prime})|$ \= $\leq$ \= $\displaystyle\int_{0}^{1}|\nabla^{2} I_{12}(x+s(x^{\prime\prime}-x))||x-x^{\prime}|ds$\\
  		\> \>$+$ $\displaystyle\int_{0}^{1}|\nabla^{2} I_{12}(x^{\prime}+s(x^{\prime\prime}-x^{\prime}))||x-x^{\prime}|ds$\\
  		\> $\leq$ \> $C(D)|x-x^{\prime}|\left(\displaystyle\int_{0}^{\frac{d(x)}{|x-x^{\prime}|}}d(x)^{-1}ds + \displaystyle\int_{\frac{d(x)}{|x-x^{\prime}|}}^{1}(s|x-x^{\prime}|)^{-1}ds\right)$\\
  		\>$\leq$ \> $C(D)\left(1+\log\dfrac{|x-x^{\prime}|}{d(x)}\right).$\\
  	\end{tabbing}

	Now we have
	\[
	  \frac{|\nabla I_{12}(x) - \nabla I_{12}(x^{\prime})|}{|x-x^{\prime}|^{\gamma}} \leq C(D,\gamma)\left(1+\log\frac{|x-x^{\prime}|}{d}\right)|x-x^{\prime}|^{-\gamma} \leq C(D,\gamma)d^{-\gamma}.
	\]
	
	The last inequality follows that $1 + \log a \leq \frac{1}{\gamma}a^{\gamma}$, for $a \geq 1$.

  \end{proof}

  Now we prove Lemma \ref{logestimate} by using Lemma \ref{secondgradient} below, which we will prove later.

  \begin{lemma}\label{secondgradient}
  	Let $r_{x} := \Big(\dfrac{|\widetilde{w}(P_{x})|}{2\widetilde{A}_{\gamma}}\Big)^{\frac{1}{\gamma}}$, where $\widetilde{A}_{\gamma} := \norm{\widetilde{w}}_{\dot{C}^{\gamma}(T(r))}$.
  	For any $x \in T(r/2) \setminus \widetilde{\Omega}$, such that $d(x) \leq \dfrac{1}{4}r_{x}$ and $P_{x} \in T(r/4)$, we have
  	\[
  	|\nabla^{2} I_{12}(x)| \leq C(D,\gamma)d(x)^{-1+\gamma}r_{x}^{-\gamma}.
  	\]
  \end{lemma}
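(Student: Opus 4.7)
The plan is to localize $\nabla^2 I_{12}(x)$ near the closest boundary point $P_x\in\partial\widetilde{\Omega}$ and compare the resulting integral against the analogous one over the tangent half-plane at $P_x$, where the cancellation is explicit. The improvement of the trivial $d(x)^{-1}$ bound to $d(x)^{-1+\gamma}r_x^{-\gamma}$ reflects the $C^{1,\gamma}$ regularity of $\partial\widetilde{\Omega}$ at scale $r_x$, which is the natural length on which $\widetilde{w}$, and hence the unit tangent to $\partial\widetilde{\Omega}$, remains close to its value at $P_x$.

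First I would split $I_{12}=I_{12}^{\mathrm{near}}+I_{12}^{\mathrm{far}}$, where $I_{12}^{\mathrm{near}}$ integrates over $\widetilde{\Omega}_r\cap B_{r_x}(P_x)$. On the far region one has $|x-y|\geq 3r_x/4$, so the direct bound $|\nabla^2 I_{12}^{\mathrm{far}}(x)|\leq C\int_{|x-y|\geq 3r_x/4}|x-y|^{-3}\,dy\leq C/r_x$ already lies below $Cd(x)^{-1+\gamma}r_x^{-\gamma}$. For $\nabla^2 I_{12}^{\mathrm{near}}$, use $\nabla_x K=-\nabla_y K$ (with $K(z)=z^\perp/|z|^2$) and integrate by parts once in $y$, trading one derivative of $K$ for a derivative of $F$; this produces an interior term controlled by $C\log(r_x/d(x))$ plus a boundary integral over $\partial(\widetilde{\Omega}_r\cap B_{r_x}(P_x))$. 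The pieces of this boundary lying on $\partial B_{r_x}(P_x)$ or on $\partial T(r)$ satisfy $|x-y|\geq 3r_x/4$, contributing $O(1/r_x)$; the essential piece is the arc on $\partial\widetilde{\Omega}\cap B_{r_x}(P_x)$.

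Parameterize this arc by arclength as $\alpha(s)$ with $\alpha(0)=P_x$. Since $\widetilde{w}$ is tangent to $\partial\widetilde{\Omega}$, the Hölder bound $|\widetilde{w}(\alpha(s))-\widetilde{w}(P_x)|\leq\widetilde{A}_\gamma|s|^\gamma$ combined with $|\widetilde{w}(P_x)|=2\widetilde{A}_\gamma r_x^\gamma$ gives $|\widetilde{w}(\alpha(s))|\geq|\widetilde{w}(P_x)|/2$, $|\alpha'(s)-\alpha'(0)|\leq C(|s|/r_x)^\gamma$, and $|\alpha(s)-P_x-s\alpha'(0)|\leq C|s|(|s|/r_x)^\gamma$ for $|s|\leq r_x$. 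Elementary geometry then yields $|x-\alpha(s)|\geq c\sqrt{s^2+d(x)^2}$. I compare the boundary integral against the straight-line reference
\[
\mathcal{L}(x):=\int_{-r_x}^{r_x}\nabla_x K(x-P_x-s\alpha'(0))\,F(P_x)\,n(P_x)\,ds.
\]
Because the Newtonian potential of a half-plane with constant density is linear outside the half-plane, the integral of $\nabla_x K$ along the full tangent line vanishes for $x$ on the outside; truncating to $|s|\leq r_x$ leaves only a tail of size $O(1/r_x)$, so $|\mathcal{L}|\leq C/r_x$.

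The error between the actual boundary integral and $\mathcal{L}$ is the sum of three pieces, coming from (i) the deviation $\alpha(s)-P_x-s\alpha'(0)$, (ii) the Hölder variation of $n(\alpha(s))$, and (iii) the variation of $F$. Bounding (i) with $|\nabla^2 K|\leq C|z|^{-3}$ and (ii), (iii) with $|\nabla K|\leq C|z|^{-2}$, combined with the Hölder estimates above and $|x-\alpha(s)|\gtrsim\sqrt{s^2+d^2}$, reduces each piece to controlling integrals such as
\[
\frac{1}{r_x^\gamma}\int_0^{r_x}\frac{s^{1+\gamma}}{(s^2+d^2)^{3/2}}\,ds\;\leq\; C(\gamma)\,d(x)^{-1+\gamma}r_x^{-\gamma},
\]
whose dominant contribution comes from $s\sim d(x)$; piece (iii) gives $\int s/(s^2+d^2)\,ds\sim \log(r_x/d)$, handled via $\log\tau\leq C_\gamma\tau^{1-\gamma}$ together with $r_x\leq r(D)$. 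The main obstacle is precisely this error analysis: despite the many singular-looking integrals, the Hölder exponent $\gamma$ must be spent decisively in each piece to convert the half-plane cancellation into the quantitative $d^{-1+\gamma}r_x^{-\gamma}$ estimate, and the lower bound $|x-\alpha(s)|\gtrsim\sqrt{s^2+d^2}$ is what allows the $d^{-1+\gamma}$ factor to emerge in place of $d^{-1}$.
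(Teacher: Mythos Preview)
Your approach is correct and takes a genuinely different route from the paper's. The paper compares $\widetilde{\Omega}$ not to the tangent half-plane but to the tangent \emph{ball} $\widetilde{B}_x$ of radius $r_x$ at $P_x$: a Geometric Lemma (in the spirit of Bertozzi--Constantin) confines the symmetric difference $(\widetilde{\Omega}\,\Delta\,\widetilde{B}_x)\cap B_{r_x}(P_x)$ to a cusp region $S_x$ of angular width $\sim(\rho/r_x)^\gamma$ at radius $\rho$ from $P_x$, and the area integral $\int_{S_x}|x-y|^{-3}\,dy$ is then bounded directly by $C\,d^{-1+\gamma}r_x^{-\gamma}$. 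The contribution of the reference ball itself is handled by a separate explicit computation (their Lemma~\ref{symmetricmagic}) showing $|\nabla^2 u_{\widetilde{B}_x}(x)|\leq C/r_x$ for the weighted ball patch; this lemma is where the extra weight $F$ is absorbed. Your method trades the area comparison for a boundary integral via one integration by parts and uses the tangent \emph{line} as reference, whose full-line integral of $\nabla K$ vanishes exactly; this sidesteps the ball computation entirely at the cost of the arc-by-arc error analysis you outline. Both routes ultimately encode the same $C^{1,\gamma}$ flatness of $\partial\widetilde{\Omega}$ on scale $r_x$. One point you should make explicit: your arclength parameterization presumes that $\partial\widetilde{\Omega}\cap B_{r_x}(P_x)$ is a single arc through $P_x$; this follows from the same H\"older bound on $\nabla\widetilde{\varphi}$ (it forces $\widetilde{\varphi}$ to be monotone in the $n_x$ direction on $B_{r_x}(P_x)$, so the zero set is a graph), but it deserves a sentence.
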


  \begin{proof}[Proof of Lemma \ref{logestimate}]
  	We consider two cases.
  	
  	\textbf{Case 1}: $|x-x^{\prime}| \geq 2^{-4-1/\gamma}r_{x}$. In this case, we have $|x-x^{\prime}|^{-\gamma} \leq C(\gamma)\dfrac{\widetilde{A}_{\gamma}}{|\widetilde{w}(P_{x})|}$. Note that by (\ref{ComparableAgamma}), we have that $\widetilde{A}_{\gamma} \leq C(D,\gamma)A_{\gamma}$. The proof follows directly from
  	\[
  	|\nabla I_{12}(x) - \nabla I_{12}(x^{\prime})| \leq 2C(D,\gamma)\Big(1+\log_{+}\frac{A_{\gamma}}{A_{\inf}} \Big),
  	\]
  	This bound can be derived by the same way as in Proposition \ref{gradientofvelocity}.
  	
  \textbf{Case 2}: $|x-x^{\prime}| < 2^{-4-1/\gamma}r_{x}$. We define $x^{\prime\prime}$, $z_1(s)$ and $z_2(s)$ as we did in Lemma \ref{xx'x''Estimate}, then we have
  \begin{equation}\label{dziEstimate}
    d(z_{i}(s)) \geq \frac12 s|x-x^{\prime}|,
  \end{equation}
  for $i = 1,2$ and $s \in [0,1]$.

  Notice that
  \begin{equation}\label{zis-Px}
  |z_{i}(s)-P_{x}| \leq |z_{i}(s) -x| + d(x) \leq 2|x-x^{\prime}|+d(x).
  \end{equation}

  So (\ref{zis-Px}) and the facts that $|x-x^{\prime}| \leq 2^{-4-1/\gamma}r_{x}$, $d(x) \leq 2^{-4-1/\gamma}r_{x}$ give us
  \begin{equation}\label{dvsrx}
  d(z_{i}(s)) \leq 2^{-2-1/\gamma}r_{x}.
  \end{equation}

  These inequalities imply
  \[
  P_{z_{i}(s)} \in B_{x} := B_{r_{x}}(P_{x}).
  \]

  Note that we have
  \[
  |\widetilde{w}(P_{z_{i}(s)}) - \widetilde{w}(P_{x})| \leq \widetilde{A}_{\gamma}|P_{z_{i}(s)}-P_{x}|^{\gamma} \leq \frac{|\widetilde{w}(P_{x})|}{2}.
  \]
  Then it implies that
  \[
  |\widetilde{w}(P_{z_{i}(s)})| \geq \frac{|\widetilde{w}(P_{x})|}{2},
  \]
  yielding
  \[
  r_{z_{i}(s)} \geq 2^{-\frac{1}{\gamma}}r_{x}.
  \]
  From (\ref{dvsrx}) it follows that
  \[
  d(z_{i}(s)) \leq \frac{1}{4}r_{z_{i}(s)}.
  \]

  To apply Lemma \ref{secondgradient}, we also need to verify $P_{z_{i}(s)} \in T(r/4)$. Without loss of generality, we can assume $|x-x^{\prime}| \leq r/32$ and $d(x) \leq r/8$. So we have
  \[
  	d(z_{i}(s)) \leq |z_{i}(s) - P_{x}| \leq |z_{i}(s) - x| + |x - P_{x}| \leq r/4.
  \]

  Now we can apply Lemma \ref{secondgradient} to $z_{i}(s)$ and get
  \[
  |\nabla^{2} I_{12}(z_{i}(s))| \leq C(D,\gamma)d(z_{i}(s))^{-1+\gamma}r_{z_{i}(s)}^{-\gamma} \leq C(D,\gamma)(s|x-x^{\prime}|)^{-1+\gamma}r_{x}^{-\gamma}.
  \]

  Integrating along the path $x \rightarrow x^{\prime\prime} \rightarrow x^{\prime}$ gives us
  \begin{tabbing}
  	$\qquad \dfrac{|\nabla I_{12}(x) - \nabla I_{12}(x^{\prime})|}{|x-x^{\prime}|^{\gamma}}$ \= $\leq$ \= $\displaystyle\int_{0}^{1}|\nabla^{2} I_{12}(x+s(x^{\prime\prime}-x))||x-x^{\prime}|^{1-\gamma} ds$\\
  	\> \>$+$ $\displaystyle\int_{0}^{1}|\nabla^{2} I_{12}(x^{\prime}+s(x^{\prime\prime}-x^{\prime}))||x-x^{\prime}|^{1-\gamma}ds$\\
  	\> $\leq$ \> $C(D,\gamma)|x-x^{\prime}|^{1-\gamma}\displaystyle\int_{0}^{1}(s|x-x^{\prime}|)^{-1+\gamma}r_{x}^{-\gamma}ds$\\
  	\> $\leq$ \> $C(D,\gamma)r_{x}^{-\gamma}$\\
  	\> $\leq$ \> $C(D,\gamma)\dfrac{A_{\gamma}}{|\widetilde{w}(P_{x})|}.$
  \end{tabbing}

  \end{proof}

  \begin{figure}[h]
  	\includegraphics[scale=0.5]{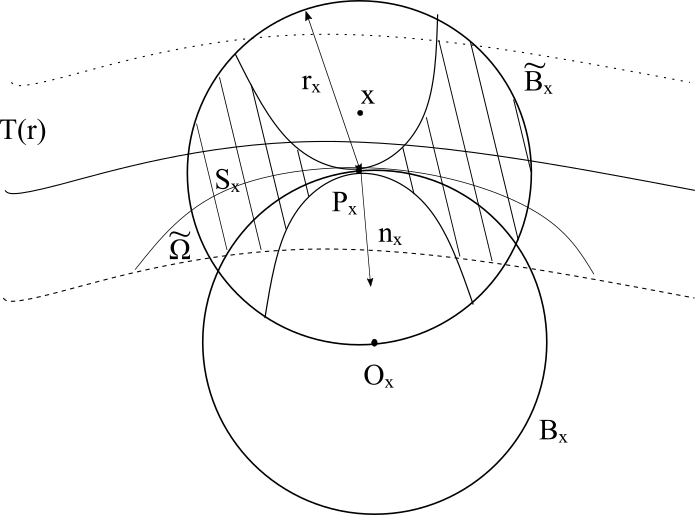}
  	\caption{}
  	\label{fig2}
  \end{figure}

  It remains to prove Lemma \ref{secondgradient}. First we need a result from \cite{KRYZ}, which is in its turn similar to the Geometric Lemma of \cite{Constantin}.

  \begin{lemma}\label{geometriclemma}
  	For any $x \in T(r)\cap D$, with $P_{x} \in T(r/4)$, let $n_{x} := \nabla\widetilde{\varphi}(P_{x})/|\nabla\widetilde{\varphi}(P_{x})|$ and $r_{x} := \Big(\dfrac{|\widetilde{w}(P_{x})|}{2\widetilde{A}_{\gamma}}\Big)^{\frac{1}{\gamma}}$. Define
  	\begin{equation}\label{Sset}
  	S_{x} := \{P_{x}+\rho\nu : \rho \in [0,r_{x}), |\nu| = 1, \left(\dfrac{\rho}{r_{x}}\right)^{\gamma} \geq 2|\nu\cdot n_{x}|, P_{x} + \rho\nu \in T(r)\}.
  	\end{equation}
  	If $\nu$ is a unit vector and $\rho \in [0,r_{x})$, then the following statements hold. \\
  1. If $\nu\cdot n_{x} \geq 0$ and $P_{x} + \rho\nu \not\in S_{x}$, then $P_{x} + \rho\nu \in \widetilde{\Omega} \cap T(r)$; \\
  2. If $\nu\cdot n_{x} \leq 0$ and $P_{x} + \rho\nu \not\in S_{x}$, then $P_{x} + \rho\nu \in T(r) \setminus \widetilde{\Omega}$ (see Figure \ref{fig2}).
  \end{lemma}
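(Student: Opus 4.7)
The plan is a first-order Taylor expansion of $\widetilde{\varphi}$ based at $P_x$, exploiting three facts: $\widetilde{\varphi}(P_x)=0$ (since $P_x\in\partial\widetilde{\Omega}$), the Whitney-type extension makes $\widetilde{\varphi}$ positive on $\widetilde{\Omega}\cap T(r)$ and non-positive on $T(r)\setminus\widetilde{\Omega}$, and $\nabla\widetilde{\varphi}(P_x)=|\widetilde{w}(P_x)|\,n_x$ (up to the usual $\nabla^\perp$ identification, so that $|\nabla\widetilde{\varphi}(P_x)|=|\widetilde{w}(P_x)|$). Consequently the sign of $\widetilde{\varphi}(P_x+\rho\nu)$ detects membership in $\widetilde{\Omega}$, and the goal reduces to showing that this sign is the sign of $\nu\cdot n_x$ whenever $(\rho/r_x)^\gamma<2|\nu\cdot n_x|$.

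First I would write
\[
\widetilde{\varphi}(P_x+\rho\nu) \;=\; \rho\,|\widetilde{w}(P_x)|(\nu\cdot n_x) \;+\; \int_0^\rho \bigl[\nabla\widetilde{\varphi}(P_x+s\nu)-\nabla\widetilde{\varphi}(P_x)\bigr]\cdot\nu\,ds,
\]
and bound the remainder by $\widetilde{A}_\gamma\rho^{\gamma+1}/(\gamma+1)$ using the H\"older estimate on $\widetilde{w}=\nabla^\perp\widetilde{\varphi}$. The hypothesis that $P_x+\rho\nu$ lies in $T(r)$ but not in $S_x$ becomes $\rho^\gamma<2r_x^\gamma|\nu\cdot n_x|$, which in view of $r_x^\gamma=|\widetilde{w}(P_x)|/(2\widetilde{A}_\gamma)$ rearranges to
\[
\widetilde{A}_\gamma\rho^\gamma \;<\; |\widetilde{w}(P_x)|\,|\nu\cdot n_x|.
\]
Multiplying by $\rho/(\gamma+1)$ shows that the remainder is strictly smaller than the main term $\rho\,|\widetilde{w}(P_x)|\,|\nu\cdot n_x|$, so $\widetilde{\varphi}(P_x+\rho\nu)$ inherits the sign of $\nu\cdot n_x$.

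For case (1), $\nu\cdot n_x\geq 0$ together with $P_x+\rho\nu\notin S_x$ actually forces $\nu\cdot n_x>0$ (otherwise $(\rho/r_x)^\gamma\geq 0=2|\nu\cdot n_x|$ would place the point in $S_x$), hence $\widetilde{\varphi}(P_x+\rho\nu)>0$ and $P_x+\rho\nu\in\widetilde{\Omega}\cap T(r)$. Case (2) is entirely symmetric: $\nu\cdot n_x<0$ gives $\widetilde{\varphi}(P_x+\rho\nu)<0$, placing the point in $T(r)\setminus\widetilde{\Omega}$. The only mild technicality will be ensuring that the segment $[P_x,P_x+\rho\nu]$ remains inside $T(r)$ so that $\widetilde{\varphi}$ is defined along it and the Taylor estimate with the uniform constant $\widetilde{A}_\gamma$ applies; this is harmless because $P_x\in T(r/4)$, $\rho<r_x$, and $r_x$ can be taken small (independently of the patch) by our choice of $r$ and the smallness of $d(x)$ imposed in the applications of this lemma.
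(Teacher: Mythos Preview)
Your proof is correct and is essentially the same argument as the paper's: both rest on the Taylor/mean-value estimate for $\widetilde{\varphi}$ at $P_x$ combined with the H\"older bound $\widetilde{A}_\gamma$ on $\nabla\widetilde{\varphi}$, yielding that the sign of $\widetilde{\varphi}(P_x+\rho\nu)$ is determined by $\nu\cdot n_x$ once $(\rho/r_x)^\gamma<2|\nu\cdot n_x|$. The only cosmetic difference is that the paper phrases it contrapositively (assume $\widetilde{\varphi}(P_x+\rho\nu)\le 0$ and deduce $\nabla\widetilde{\varphi}(P_x)\cdot\nu\le\widetilde{A}_\gamma\rho^\gamma$, hence $(\rho/r_x)^\gamma\ge 2\nu\cdot n_x$), whereas you argue directly; your remainder bound $\widetilde{A}_\gamma\rho^{\gamma+1}/(\gamma+1)$ is in fact slightly sharper than what the paper uses. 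One small clarification: your remark that ``$r_x$ can be taken small by our choice of $r$'' is not quite accurate since $r_x$ is determined by the patch data, but this is handled exactly as you anticipate---in the applications (Lemma~\ref{secondgradient}) the paper simply replaces $r_x$ by $r/4$ whenever $r_x>r/4$, which keeps the segment inside $T(r)$.
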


  Lemma \ref{geometriclemma} is slightly different from Lemma 3.7 in \cite{KRYZ} in that $D$ is a general smooth bounded domain. However the proofs are virtually identical. For the sake
 of completeness we present the original proof here.
  \begin{proof}[Proof of Lemma \ref{geometriclemma}]
  	
  	We only need to prove the first statement, as the proof of the second statement is analogous. Let us assume $\nu \cdot n_{x} \geq 0$ and $P_{x} + \rho\nu \notin \widetilde{\Omega} \cap T(r)$, with $|\nu| = 1$ and $\rho \geq 0$. Then we have,
  	\[
  		\nabla \widetilde{\varphi}(P_{x}) \cdot \nu \geq 0 \qquad \text{and} \qquad \widetilde{\varphi}(P_{x} + \rho\nu) \leq 0.
  	\]
  	So we must have $\nabla \widetilde{\varphi}(P_{x}) \cdot \nu \leq \widetilde{A}_{\gamma}\rho^{\gamma}$ due to $C^\gamma$ estimate on
  $\nabla \widetilde{\varphi}$ and because $\widetilde{\varphi}(P_{x}) = 0$. Thus
  	\[
  		2\nu \cdot n_{x} \leq \dfrac{2\widetilde{A}_{\gamma}\rho^{\gamma}}{|\nabla \widetilde{\varphi}(P_{x})|} = \Big(\dfrac{\rho}{r_{x}}\Big)^{\gamma},
  	\]
  	so either $\rho \geq r_{x}$ or $P_{x} + \rho\nu \in S_{x}$.
  \end{proof}

  Now, let us prove Lemma \ref{secondgradient}.

  \begin{figure}[h]
  	\includegraphics[scale=0.5]{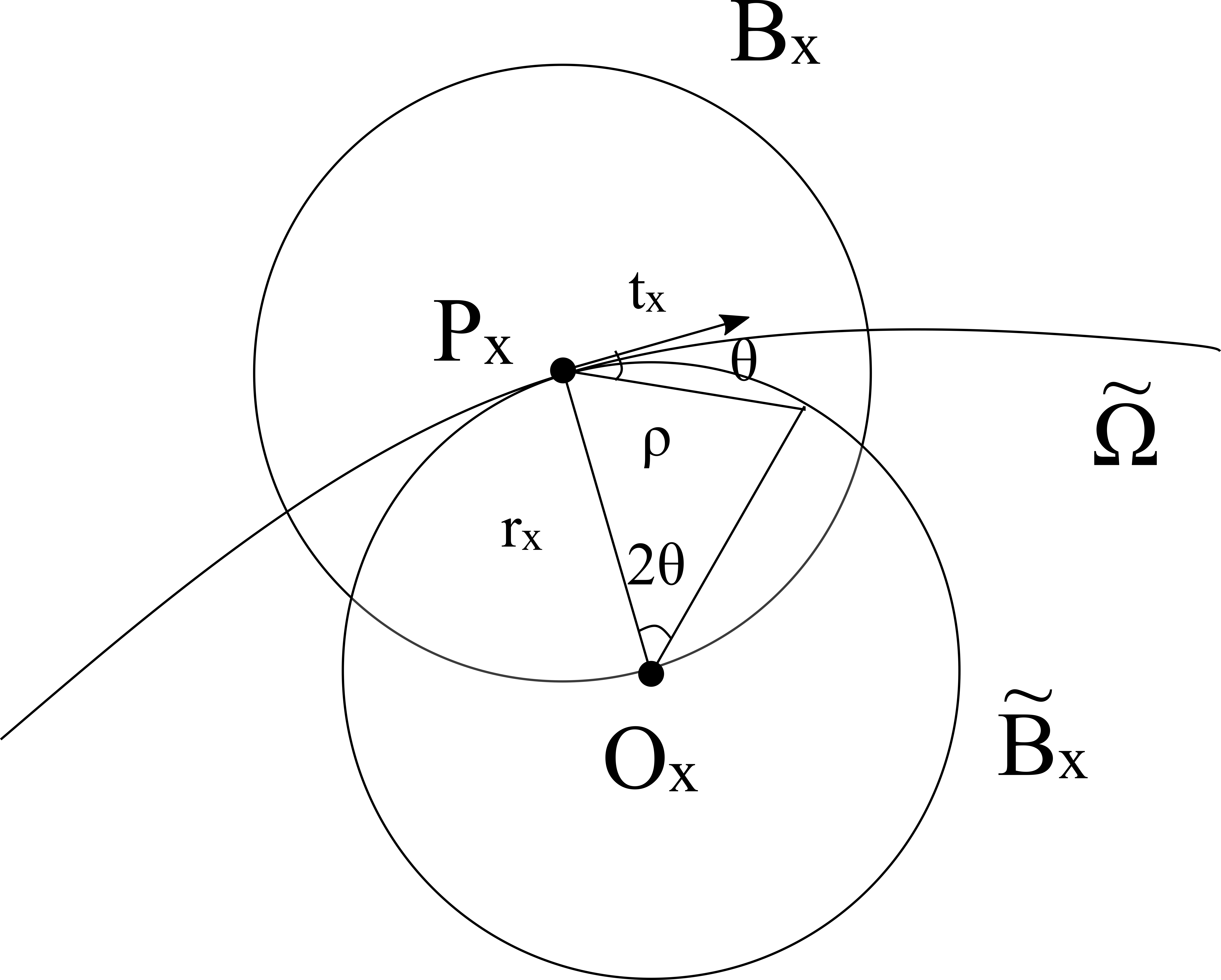}
  	\caption{}
  	\label{fig4}
  \end{figure}

  \begin{proof}[Proof of Lemma \ref{secondgradient}]
  	Let $n_{x}, S_{x},$ $\nu$ be the same as in Lemma \ref{geometriclemma} and let $B_{x} := B_{r_{x}}(P_{x})$ and $\widetilde{B}_{x} := B_{r_{x}}(O_{x})$, where $O_{x} := P_{x} + r_{x}n_{x}$ (see Figure \ref{fig2}). Then $P_{x} \in \partial\widetilde{B}_{x}$ and the unit inner normal vector to $\partial\widetilde{B}_{x}$ at $P_{x}$ is $n_{x}$.
  	
  	First we claim that,
  	\[
  		 \partial\widetilde{B}_{x} \cap B_{x} \cap T(r) \subseteq S_{x}.
  	\]
  	
  	Indeed, let $t_{x}$ be the vector tangent to $\partial\widetilde{B}_{x}$ at $P_{x}$ and let $\theta \in [0,\dfrac{\pi}{2}]$ denote the angle between $t_{x}$ and $\nu$ (see Figure \ref{fig4}). Note that $P_{x} + \rho\nu \in \partial\widetilde{B}_{x} \cap B_{x} \cap T(r)$ implies that $\theta \leq \dfrac{\pi}{6}$. By the law of sines, we have
  	\[
  	\frac{\rho}{\sin 2\theta} = \frac{r_{x}}{\cos\theta}.
  	\]
  	Since $\nu\cdot n_{x} = \sin\theta$, $\Big(\dfrac{\rho}{r_{x}}\Big)^{\gamma} \geq 2|\nu\cdot n_{x}|$ follows immediately from the fact that when $\theta \leq \dfrac{\pi}{6}$, $(2\sin\theta)^{\gamma} \geq 2\sin\theta$.\\
  	
  	Together with Lemma \ref{geometriclemma}, we have
  	\[
  	(\widetilde{\Omega}\Delta\widetilde{B}_{x}) \cap B_{x} \cap T(r) \subseteq S_{x}.
  	\]
  	
  	Define
  	\[
  	u_{\widetilde{B}_{x}}(z) := \frac{1}{2\pi} \int_{\widetilde{B}_{x}}\frac{(z-y)^{\perp}}{|z-y|^{2}}F(y)dy,
  	\]
  	recall that $F(y)$ is the Jacobian of the reflection map $S(y)$.
  	
  	We claim that
  	\begin{equation}\label{magicsymmetric}
  	|\nabla^{2}u_{\widetilde{B}_{x}}(x)| \leq \frac{C(D)}{r_{x}}.
  	\end{equation}
  	
  	We will prove (\ref{magicsymmetric}) in Lemma \ref{symmetricmagic} later.

  	By (\ref{definitionOfI12}) and definition of $u_{\widetilde{B}_{x}}$, we have
  	\begin{equation}\label{differencedouble}
  	\begin{split}
  	|\nabla^{2}I_{12}(x) - \nabla^{2}u_{\widetilde{B}_{x}}(x)|
  	&\leq \int_{T(r) \setminus B_{x}}\frac{C(D)}{|x-y|^{3}}dy\\
  	& \quad + \int_{(\widetilde{\Omega}\Delta\widetilde{B}_{x}) \cap B_{x} \cap T(r)}\frac{C(D)}{|x-y|^{3}}dy\\
  	& \quad + \int_{\widetilde{B}_{x} \cap T(r)^{c}}\frac{C(D)}{|x-y|^{3}}dy.
  	\end{split}
  	\end{equation}
  	
  	To bound the first term, note that if $y \in B^{c}_{x}$, we have $|x-y| \geq \dfrac{3}{4}r_{x}$, due to $d(x) \leq \dfrac{1}{4}r_{x}$. Therefore the first term in the right hand side can be bounded by $C(D)r_{x}^{-1}$.

	For the second term, we claim that,
  	\begin{equation}\label{Sdx}
  	\text{dist}(x,S_{x}) \geq \frac{d(x)}{2}.
  	\end{equation}
  	Indeed, if $P_{x} + \rho\nu \in B_{d(x)/2}(x) \cap T(r)$, we have $|\nu \cdot n_{x}| > \dfrac{1}{2}$ and $\rho \leq \dfrac{3}{2}d(x) < r_{x}$, hence $P_{x} + \rho\nu \not\in S_{x}$ by definition in Lemma \ref{geometriclemma}.
  	
  	Also we note that, if $|P_{x} - y| \geq 2d(x)$, we have
  	\begin{equation}\label{dcomp351}
  	|P_{x} - y| \leq |x - y| + d(x) \leq 2|x-y|.
  	\end{equation}
  	
  	Denoting by $II$ the second term in (\ref{differencedouble}), we have that
  	
  	\begin{tabbing}
  		$\qquad$ $\qquad$ $II$ \= $\leq$ $\displaystyle\int_{S_{x}}\frac{C(D)}{|x-y|^{3}}dy$\\
  		\> $\leq$ $\displaystyle\int_{S_{x}\setminus B_{2d(x)}(P_{x})}\frac{C(D)}{|x-y|^{3}}dy + C\left(\frac{d(x)}{2}\right)^{-3}|S_{x}\cap B_{2d(x)}(P_{x})|$\\
  		\> $\leq$ $\displaystyle\int_{S_{x}\setminus B_{2d(x)}(P_{x})}\frac{C(D)}{|P_{x}-y|^{3}}dy + \frac{C(D)}{d(x)^{3}}|S_{x}\cap B_{2d(x)}(P_{x})|$\\
  		\> $\leq$ $\displaystyle\int_{2d(x)}^{r_{x}}\frac{C(D)}{\rho^{3}}\Big(\frac{\rho}{r_{x}}\Big)^{\gamma}\rho d\rho + \frac{C(D)}{d(x)^{3}}\displaystyle\int_{0}^{2d(x)}\Big(\frac{\rho}{r_{x}}\Big)^{\gamma}\rho d\rho$\\
  		\> $\leq$ $C(D,\gamma)d(x)^{-1+\gamma}r_{x}^{-\gamma}.$
  	\end{tabbing}
Here we used \eqref{Sdx} in the first step, \eqref{dcomp351} in the second step, and \eqref{Sset} in the third step.

  	Now we consider the third term in (\ref{differencedouble}). Suppose $r_{x} \leq r/4$, so that $\widetilde{B}_{x} \subset T(r)$. Then the region of the integration is empty and the third term vanishes. Othervise, if $r_{x} \geq r/4$, we can redefine $S_{x}$, $B_{x}$, $\widetilde{B}_{x}$ by replacing $r_{x}$ with $r/4$ and all the arguments remain true. The reason behind this is that Lemma \ref{geometriclemma} tells us that $\partial\widetilde{\Omega}\cap B_{x} \subset S_{x}$. If we shrink both $B_{x}$ and $S_{x}$ to by the same factor, the inclusion still holds.

  	Combining with (\ref{magicsymmetric}) and (\ref{differencedouble}), we obtain
  	\[
  	|\nabla^{2}I_{12}(x)| \leq C(D,\gamma)d(x)^{-1+\gamma}r_{x}^{-\gamma} + C(D)r_{x}^{-1}.
  	\]
  	
    The result follows from $d(x) \leq \dfrac{1}{4}r_{x}$.
  	
  \end{proof}

  Next, we are going to prove (\ref{magicsymmetric}). To make the argument simpler, we let $B_{r} = \{z \in \mathbb{R}^{2}:|z-(0,-r)| < r\}$, $f$ be a function such that $|f|$, $|\nabla f|$ and $|\nabla^{2}f|$ are bounded by a universal constant $C$ in $B_{r}$. Define
  \begin{equation}\label{definitionOfu}
  u(x) = \dfrac{1}{2\pi}\displaystyle\int_{B_{r}}\frac{(x-y)^{\perp}}{|x-y|^{2}}f(y)dy.
  \end{equation}
  By setting $B_{r}$ to be $\widetilde{B}_{x}$ and $f(y)$ to be $F(y)$, (\ref{magicsymmetric}) can be derived from the following Lemma.

  \begin{lemma}\label{symmetricmagic}
  	Let $x = (0,h)$, where $h \leq 1$ is any positive real number. For $h \leq r \leq 1$, the vector field $u$ defined by (\ref{definitionOfu}) satisfies
  	\[
  		|\nabla^{2}u(x)| \leq \dfrac{C}{r}.
  	\]
  \end{lemma}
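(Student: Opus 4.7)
My plan is to pass to complex notation and exploit that outside $B_r$ the velocity $u$ is essentially the real and imaginary parts of a holomorphic function. Identifying $z = x_1 + ix_2$ and $w = y_1 + iy_2$, a direct computation using $\overline{z-w}/|z-w|^2 = 1/(z-w)$ gives
\[
F(z) := u_1(x) - iu_2(x) = \frac{i}{2\pi}\int_{B_r}\frac{f(w)}{z-w}\,dA(w).
\]
For $z = ih \notin B_r$, the kernel $1/(z-w)$ is holomorphic in $z$, so $F$ is holomorphic near $ih$. By the Cauchy--Riemann equations the four second partials of $u_1, u_2$ at $x$ are all controlled by $|F''(ih)|$, so it suffices to show $|F''(ih)| \leq C/r$.

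The next step is to Taylor expand $f$ at the origin, which is the tangent point of $\partial B_r$ with the real axis: $f(w) = f(0) + \nabla f(0)\cdot w + R(w)$ with $|R(w)| \leq C|w|^2$ (using $\|f\|_{W^{2,\infty}} \leq C$). Writing $w_1 = (w+\bar w)/2$ and $w_2 = (w-\bar w)/(2i)$, the linear part becomes $\alpha w + \beta \bar w$ with $|\alpha|, |\beta| \leq C$, and we split $F'' = F_0'' + F_1'' + F_R''$ accordingly.

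For the polynomial pieces I would compute the moments $\int_{B_r}(z-w)^{-1}\,dA$, $\int_{B_r} w(z-w)^{-1}\,dA$ and $\int_{B_r} \bar w(z-w)^{-1}\,dA$ explicitly, either by Green's theorem applied to a judicious $\bar w$-antiderivative of the (holomorphic in $w$) kernel or by directly expanding $w = w_c + \rho e^{i\theta}$ with $w_c = -ir$. This yields
\[
\frac{\pi r^2}{z-w_c},\qquad \frac{\pi r^2 w_c}{z-w_c},\qquad \frac{\pi r^2\bar w_c}{z-w_c}+\frac{\pi r^3}{2(z-w_c)^2}.
\]
Differentiating twice in $z$ at $z = ih$ and using $|z-w_c| = h+r \geq r$, every term yields a contribution of size $\leq C/r$ to $|F_0''(ih)| + |F_1''(ih)|$; the extremal case is the $r^3/(z-w_c)^4$ piece coming from the $\bar w$ moment.

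For $F_R$ a direct estimate suffices. Since $w_2 \leq 0$ throughout $B_r$ and $h > 0$, we have $|ih - w|^2 = w_1^2 + (h-w_2)^2 \geq w_1^2 + w_2^2 = |w|^2$, so the integrand $|R(w)|/|ih-w|^3$ is bounded by $1/|w|$. Since $B_r \subset \{|w| \leq 2r\}$, polar coordinates give
\[
|F_R''(ih)| \leq C\int_{|w|\leq 2r}\frac{dA(w)}{|w|} = 4\pi C r,
\]
which is dominated by $C/r$ because $r \leq 1$. Combining the three pieces gives $|F''(ih)| \leq C/r$, hence $|\nabla^2 u(x)| \leq C/r$. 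The main technical burden is the Green's theorem calculation of the complex moments; once those explicit identities are in hand, the remaining steps are bookkeeping and the elementary tangency estimate $|ih-w| \geq |w|$.
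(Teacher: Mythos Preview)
Your argument is correct in substance and the overall strategy is sound. One small slip: the $\bar w$--moment should read $\pi r^2\bar w_c/(z-w_c)+\pi r^4/(2(z-w_c)^2)$, not $r^3$; with the correct power the second derivative of that piece is $O(1)$, and the genuinely extremal contribution is the constant moment $\pi r^2/(z-w_c)$, whose second derivative is $\sim r^2/r^3=1/r$. This only improves your bound, so nothing breaks.

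The route, however, differs noticeably from the paper's. Both proofs Taylor-expand $f$ at the tangency point and treat the constant and quadratic remainder pieces identically (the paper's $|x-y|\geq|y|$ is exactly your $|ih-w|\geq|w|$). The divergence is in the linear term. The paper stays in real variables: it writes $(\nabla I_{12})$ against a constant direction, integrates by parts via $\nabla_y\cdot\big((\log|x-y|^2)k^\perp\big)$ to produce a boundary integral over $\partial B_r$ plus a second constant-patch term, and then estimates the boundary integral by splitting $\partial B_r$ into a near-origin arc (where delicate oddness cancellations in $y_1$ are used) and a far arc. Your complex-analytic computation of the moments $\int_{B_r} w^a\bar w^{\,b}(z-w)^{-1}\,dA$ replaces that entire boundary-integral analysis by differentiating explicit rational functions of $z-w_c$, which is shorter and avoids the hand-checked cancellations. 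The cost is that one must recognize the holomorphic structure of $u_1-iu_2$ outside $B_r$ (which you justify via the Cauchy--Riemann equations, equivalently $\mathrm{div}\,u=\mathrm{curl}\,u=0$ there); the paper's argument, by contrast, never leaves real coordinates and would adapt more readily if the kernel were perturbed away from the exact Biot--Savart form.
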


  \begin{proof}[Proof of Lemma \ref{symmetricmagic}]
  	
  	First, by assumption, for any $y$ in $B_{r}$, we have
  	\[
  	|f(y) - f(0) - \nabla f(0)\cdot y| \leq C|y|^{2}.
  	\]
  	By (\ref{definitionOfu}), we write $u(x)$ as
  	\[
  	u(x) = \dfrac{1}{2\pi}\int_{B_{r}}\frac{(x-y)^{\perp}}{|x-y|^{2}}f(y)dy = \frac{1}{2\pi}\left(K_{1} + K_{2} + K_{3}\right),
  	\]
  	where
  	\begin{tabbing}
  		$\qquad$ \= $K_{1} := \displaystyle\int_{B_{r}}\dfrac{(x-y)^{\perp}}{|x-y|^{2}}f(0)dy,$\\
  		\> $K_{2} := \displaystyle\int_{B_{r}}\dfrac{(x-y)^{\perp}}{|x-y|^{2}}\nabla f(0)\cdot \vec{y}dy,$\\
  		\> $K_{3} := \displaystyle\int_{B_{r}}\dfrac{(x-y)^{\perp}}{|x-y|^{2}}(f(y) - f(0) - \nabla f(0)\cdot \vec{y})dy.$
  	\end{tabbing}
  	
  	It suffices to prove $|\nabla^{2}K_{i}(x)| \leq \dfrac{C}{r}$, for $i = 1,2,3$.\\
  	
  	For $i = 1$, we use the argument from \cite{KRYZ}. Observe that
  	\[
  	K_{1}(x) = f(0)(\nabla^{\perp}\Delta^{-1}\chi_{B_{r}})(x).
  	\]
  	Since $|x - (0,-r)| > r$, we have by the rotational invariance of $K_{1}(x)$ (and with $n$ being the outer unit normal vector to $\partial B_{|x-(0,-r)|}\big((0,-r)\big)$)
  	
  	\begin{equation}\label{estimateI1}
  	\begin{split}
  	K_{1}(x) & = \dfrac{\big(x - (0,-r)\big)^{\perp}}{|x-(0,-r)|}|K_{1}(x)|\\
  	& = \dfrac{\big(x-(0,-r)\big)^{\perp}}{|x-(0,-r)|}\displaystyle\fint_{\partial B_{|x-(0,-r)|}\big((0,-r)\big)}n\cdot f(0)\nabla\Delta^{-1}\chi_{B_{r}}d\sigma\\
  	& = \dfrac{\big(x-(0,-r)\big)^{\perp}}{|x-(0,-r)|^{2}}\dfrac{1}{2\pi}\displaystyle\int_{B_{|x-(0,-r)|}\big((0,-r)\big)}f(0)\chi_{B_{r}}(y)dy\\
  	& = \dfrac{1}{2}f(0)r^{2}\dfrac{(x - (0,-r))^{\perp}}{|x-(0,-r)|^{2}}.
  	\end{split}
  	\end{equation}
  	
  	Differentiate this, then we have
  	\[
  	|\nabla^{2}K_{1}(x)| \leq \dfrac{C}{r}.
  	\]
  	
  	For $i = 3$, since $|x-y| \geq |y|$, we have
  	\[
  	|\nabla^{2}K_{3}(x)| \lesssim \int_{B_{r}}\frac{|y|^{2}}{|x-y|^{3}}dy \leq \int_{B_{r}}\frac{1}{|y|}dy \lesssim r.
  	\]
  	
  	For $i = 2$, first note that it suffices to control $|\nabla^{2}(k\cdot K_{2}(x))|$, for any constant unit vector $k = (k_{1}, k_{2})$. Denoting $a = \nabla f(0)$, we have
  	
  	\begin{tabbing}
  		$\quad$ $2k\cdot K_{2}(x)$ \= $=$ $-2\displaystyle\int_{B_{r}}\dfrac{(x-y)\cdot k^{\perp}}{|x-y|^{2}}(a\cdot y)dy$\\
  		\> $=$ $\displaystyle\int_{B_{r}}\nabla_{y}\cdot\big((\ln|x-y|^{2})k^{\perp}\big)(a\cdot y)dy$\\
  		\> $=$ $\displaystyle\int_{\partial B_{r}}(n\cdot k^{\perp})\ln|x-y|^{2}(a\cdot y)dS(y)$ $-$ $(k^{\perp}\cdot a)\displaystyle\int_{B_{r}}\ln|x-y|^{2}dy$\\
  		\> $:=$ $K_{21}(x) - K_{22}(x).$
  	\end{tabbing}
  	Here $n$ is the outer normal vector for $\partial B_{r}$. \\
  	
  	Controlling $\nabla^{2}K_{22}(x)$ is straightforward. Indeed, $\nabla K_{22}(x)$ is the velocity field generated by the vorticity patch $2(k^{\perp}\cdot a)\chi_{B_{r}}(x)$. By estimate (\ref{estimateI1}), we have
  	\[
  	\nabla K_{22}(x) = 2(k^{\perp}\cdot a)\pi r^{2}\dfrac{(x - (0,-r))^{\perp}}{|x-(0,-r)|^{2}}.
  	\]
  	So $|\nabla ^{2}K_{22}(x)| \leq C$, where $C$ is a universal constant.\\
  	
  	For $K_{21}(x)$, note that $\partial_{22}K_{21}(x) = -\partial_{11}K_{21}(x)$ and $\partial_{12}K_{21}(x) = \partial_{21}K_{21}(x)$, so it suffices to consider two cases.
  	
  	\begin{equation}\label{partial11}
  	\partial_{11}K_{21}(x) = 2\displaystyle\int_{\partial B_{r}}(n\cdot k^{\perp}) \dfrac{(h-y_{2})^{2} - y_{1}^{2}}{|x-y|^{4}}(a_{1}y_{1} + a_{2}y_{2})dS(y),
  	\end{equation}
  	\begin{equation}\label{partial12}
  	\partial_{12}K_{21}(x) = 4\int_{\partial B_{r}}(n\cdot k^{\perp}) \dfrac{y_{1} (h-y_{2})}{|x-y|^{4}}(a_{1}y_{1} + a_{2}y_{2})dS(y).
  	\end{equation}

  	First note that when $|y| \leq r/4$, we have $y_{2} \leq \dfrac{C}{r} y_{1}^{2}$ and that $|n\cdot k - k_{2}| \leq \dfrac{C}{r}|y_{1}|$.\\
  	
  	For (\ref{partial11}), we have, with a constant $C < \infty$ that depends on $a$ and $k$:
  	\begin{align*}
  	\partial_{11}K_{21}(x) & =  2\displaystyle\int_{\partial B_{r}\cap \{|y| \geq r/4\}}(n\cdot k^{\perp}) \dfrac{(h-y_{2})^{2} - y_{1}^{2}}{|x-y|^{4}}(a_{1}y_{1} + a_{2}y_{2})dS(y)\\
  	& \quad  + 2\displaystyle\int_{\partial B_{r}\cap\{|y| < r/4\}}(n\cdot k^{\perp}) \dfrac{(h-y_{2})^{2} - y_{1}^{2}}{|x-y|^{4}}a_{1}y_{1}dS(y)\\
  	& \quad + 2\displaystyle\int_{\partial B_{r}\cap\{|y| < r/4\}}(n\cdot k^{\perp}) \dfrac{(h-y_{2})^{2} - y_{1}^{2}}{|x-y|^{4}}a_{2}y_{2}dS(y)\\
  	& \leq C + 2\displaystyle\int_{\partial B_{r}\cap\{|y| < r/4\}}(n\cdot k^{\perp} - k_{2}) \dfrac{(h-y_{2})^{2} - y_{1}^{2}}{|x-y|^{4}}a_{1}y_{1}dS(y)\\
  	& \quad + 2\displaystyle\int_{\partial B_{r}\cap\{|y| < r/4\}}k_{2} \dfrac{(h-y_{2})^{2} - y_{1}^{2}}{|x-y|^{4}}a_{1}y_{1}dS(y)\\
  	& \quad + \frac{C}{r}\displaystyle\int_{\partial B_{r}\cap\{|y| < r/4\}} \dfrac{((h-y_{2})^{2} - y_{1}^{2})y_{1}^{2}}{((h-y_{2})^{2} + y_{1}^{2})^{2}}dS(y)\\
  	& \leq C + 0+ \frac{C}{r}\displaystyle\int_{\partial B_{r}\cap\{|y| < r/4\}}dS(y) \leq C.
  	\end{align*}
  	Note that $ \displaystyle\int_{\partial B_{r}\cap\{|y| < r/4\}}k_{2} \dfrac{(h-y_{2})^{2} - y_{1}^{2}}{|x-y|^{4}}a_{1}y_{1}dS(y) = 0$, because the integrand is odd in $y_{1}$.\\
  	
  	Similarly, we can derive that $\partial_{12}K_{21}(x) \leq C$. We leave details to interested readers.
  	
  \end{proof}

   Now we are ready to prove Theorem \ref{singlepatch}.\\

	\begin{proof}[Proof of Theorem \ref{singlepatch}]
	
	Let $A(t) := \dfrac{A_{\gamma}(t) + A_{\infty}(t)}{A_{\inf}(t)}$. From inequalities (\ref{Ainftyestimate}), (\ref{Ainfestimate}), (\ref{Agammaestimate}), \eqref{graduest}, and \eqref{secondkey531},
we have
	\[
	A^{\prime}(t) \leq C(D,\gamma)A(t)(1 + \log_{+}A(t)).
	\]
	We thus obtain that $A(t)$ grows at most double-exponentially in time, and therefore, the same estimate applied to $\dfrac{A_{\gamma}(t)}{A_{\inf}(t)}$.
 Given that, double exponential upper bound on growth can be obtained for $A_{\infty}(t)$, $A_{\inf}(t)^{-1}$ and $A_{\gamma}(t)$ from
(\ref{Ainftyestimate}), (\ref{Ainfestimate}) and (\ref{Agammaestimate}) respectively. The proof is completed.
    \end{proof}

    \section{General case}

    In this section, we consider the general case, where the initial data is
    \[
    \omega_{0}(x) = \sum\limits_{k=1}^{N}\theta_{k}\chi_{\Omega_{k}(0)}(x).
    \]

    By Yudovich theory (see \cite{Yudovich}, \cite{Majda} or \cite{Marchioro}), there exists a unique solution in the form of
    \begin{equation}\label{patchsolution}
    \omega(x,t) := \sum\limits_{k=1}^{N}\theta_{k}\chi_{\Omega_{k}(t)}(x),
    \end{equation}
    with $\Omega_{k}(t) = \Phi_{t}(\Omega_{k}(0))$ for each $k$. Also note that $\Phi_{t}(x)$ is uniquely defined for any $x \in \mathbb{R}^{2}$, due to time independent log-Lipschitz bound
    \begin{equation}\label{loglip}
    |u(x,t) - u(y,t)| \leq C(D)\norm{\omega_{0}}_{L^{\infty}}|x-y|\log(1 + |x-y|^{-1}).
    \end{equation}

    By Definition \ref{generalpatchdef}, to show that $\omega$ in (\ref{patchsolution}) is a $C^{1,\gamma}$ patch solution, we need to prove that
$\{\partial\Omega_{k}(t)\}_{k=1}^{N}$ is a family of disjoint simple closed curves for each $t \geq 0$, and
    \[
    \sup\limits_{t \in [0,T]}\max\limits_{k}\norm{\partial \Omega_{k}(t)}_{C^{1,\gamma}} \, < \infty
    \]
    for each $T < \infty$.\\

    First note that (\ref{loglip}) yields
    \[
    \min\limits_{i\not = k}\text{dist}(\Omega_{i}(t), \Omega_{k}(t)) \geq \delta(t) > 0
    \]
    for all $t \geq 0$, where $\delta(t)$ decreases at most double exponentially in time. This is going to ensure that the effects of the patches on each other will be controlled. Now, it remains to prove that each $\partial\Omega_{k}(t)$ is a simple closed curve with $\parallel\partial\Omega_{k}(t)\parallel_{C^{1,\gamma}}$ uniformly bounded on bounded time interval.\\

 Next, we add $\sup\limits_{k}$ in the definitions of $A_{\infty}$, $A_{\gamma}$ and add $\inf\limits_{k}$ in the definition of $A_{\inf}$.
    Let us decompose
    \[
    u = \sum\limits_{i=1}^{N}u_{i},
    \]
    with each $u_{i}$ coming from the contribution of the patch $\Omega_{i}$ to $u$. If $i \not = k$, then we have
    \[
    \norm{\nabla^{n}u_{i}(\cdot,t)}_{L^{\infty}(\Omega_{k}(t))} \, \leq C(\omega_{0},n,D)\delta(t)^{-n-1}
    \]
    for all $n \geq 0$. This yields
    \[
    \norm{\nabla u_{i}(\cdot,t)}_{\dot{C}^{\gamma}(\Omega_{k}(t))} \, \leq C(\omega_{0},D)\delta(t)^{-3}.
    \]

    Analogously to Proposition \ref{gradientofvelocity}, we also have the estimate by simple scaling,
    \begin{equation}\label{generalgradientv}
    \norm{\nabla u_{i}(\cdot,t)}_{L^{\infty}(\mathbb{R}^{2})} \leq C(D,\gamma)|\theta_{i}|\Big(1+\log_{+}\frac{A_{\gamma}(t)}{A_{\inf}(t)}\Big).
    \end{equation}

    With all these in hand, let us prove Theorem \ref{generalpatch}.
    \begin{proof}[Proof of Theorem \ref{generalpatch}]
    	We now consider $\varphi_{k}$ and $w_{k} := \nabla^{\perp}\varphi_{k}$ for each $\Omega_{k}$. With $\Theta := \max\limits_{1 \leq k \leq N}|\theta_{k}|$, for each $k$ and $t > 0$, we have
    	
    	\begin{tabbing}
    		$\qquad$ $\norm{(\nabla u)w_{k}}_{\dot{C}^{\gamma}(\Omega_{k})}$ \= $\leq$ $C(D,\gamma)\Theta A_{\gamma}\Big(1 + \log_{+}\dfrac{A_{\gamma}}{A_{\inf}}\Big) + A_{\infty}$\\
    		\> $\quad$ $+$ $\sum\limits_{i \not = k}\norm{\nabla u_{i}}_{L^{\infty}(\Omega_{k})}\norm{w_{k}}_{\dot{C}^{\gamma}(\Omega_{k})}$\\
    		\> $\quad$ $+$ $\sum\limits_{i \not = k}\norm{\nabla u_{i}}_{\dot{C}^{\gamma}(\Omega_{k})}\norm{w_{k}}_{L^{\infty}(\Omega_{k})}$\\
    		\> $\leq$ $C(D,\gamma)N\Theta A_{\gamma}\Big(1 + \log_{+}\dfrac{A_{\gamma}}{A_{\inf}}\Big) + C(D,\omega_{0})N\delta(t)^{-3}A_{\infty}.$
    		
    	\end{tabbing}

    	Then we have estimates,
    	\[
    	A_{\gamma}^{\prime}(t) \leq C(D,\gamma)N\Theta A_{\gamma}(t)\Big(1 + \log_{+}\frac{A_{\gamma}(t)}{A_{\inf}(t)}\Big) + C(\omega_{0},D)N\delta(t)^{-3}A_{\infty}(t).
    	\]
    	Let $\widetilde{A}(t) := A_{\gamma}(t)A_{\inf}(t)^{-1} + A_{\infty}(t)$, then a simple computation yields that
    	\[
    	\widetilde{A}^{\prime}(t) \leq C(D,\gamma,\omega_{0})\widetilde{A}(t)\big(\delta(t)^{-3} + \log_{+}\widetilde{A}(t)\big).
    	\]
    	
    	Since $\delta(t)^{-3}$ increases at most double exponentially in time, it follows that $\widetilde{A}(t)$ increases at most triple exponentially. So $\norm{\partial\Omega_{k}(t)}_{C^{1,\gamma}}$ is uniformly bounded on bounded  time intervals, thus completing the proof.\\
    \end{proof}

	\section{One special case with double exponential upper bound}
	We consider a special case in a unit disc $D := B_{1}(0)$ with initial data in the following form:
	\[
	\omega_{0}(x) = \omega_{1}(x,0) - \omega_{2}(x,0) = \chi_{\Omega_{1}(0)}(x) - \chi_{\Omega_{2}(0)}(x).
	\]
	Here $\Omega_{1}(0)$ and $\Omega_{2}(0)$ are two single disjoint patches that are symmetric with respect to the line $x_{1} = 0$. 
The Euler evolution preserves the odd symmetry, so the solution is of the form
	\[
	\omega(x,t) = \omega_{1}(x,t) - \omega_{2}(x,t) = \chi_{\Omega_{1}(t)}(x) - \chi_{\Omega_{2}(t)}(x)
	\]
	for all times, where $\Omega_{1}(t)$ and $\Omega_{2}(t)$ are two symmetric single disjoint patches.\\
	
	Note that when $D$ is a disk, we have an explicit formula for $G_{D}(x,y)$. The velocity $u$ generated by single patch $\Omega$ is given by
	
	\begin{equation}
		u = v + \widetilde{v} = -\dfrac{1}{2\pi}\int_{\Omega}\dfrac{(x-y)^{\perp}}{|x-y|^{2}}dy + \dfrac{1}{2\pi}\int_{\widetilde{\Omega}}\dfrac{(x-y)^{\perp}}{|x-y|^{2}}\dfrac{1}{|y|^{4}}dy.
	\end{equation}
	Note that the reflection map is defined via $\widetilde{y} = S(y) = \dfrac{y}{|y|^{2}}$ and the Jacobian $F(y) = \dfrac{1}{|y|^{4}}$. Here $S(y)$ is defined on all $D$, not only restricted to $T(r)$. Also notice that $T(r)$ is the same as the annulus $A(0;1-r, 1+r):= \{x: 1-r \leq |x| \leq 1+r\}$. We will keep using $T(r)$ instead of the annulus $A$ for convenience and consistency.
In the argument below, $r$ will a sufficiently small universal constant.

	Let us prove Theorem \ref{symmetriccase}.

	\begin{proof}[Proof of Theorem \ref{symmetriccase}]
	
	From now on, we will drop $t$ from $\Omega_{k}(t)$, since the estimate is time independent. We adopt $\varphi_{k}$ and $w_{k}$ notation for $k = 1,2$ from Section 3 and we also add $\sup\limits_{k}$ in the definitions of $A_{\infty}$, $A_{\gamma}$ and add $\inf\limits_{k}$ in the definition of $A_{\inf}$ as we did in Section 3. Note that $u = u_{1} + u_{2}$, and for $i = 1, 2$,
	\[
	u_{i} = v_{i} + \widetilde{v}_{i} = -\dfrac{1}{2\pi}\int_{\Omega_{i}}\dfrac{(x-y)^{\perp}}{|x-y|^{2}}dy + \dfrac{1}{2\pi}\int_{\widetilde{\Omega}_{i}}\dfrac{(x-y)^{\perp}}{|x-y|^{2}}\dfrac{1}{|y|^{4}}dy.
	\]
	
	From the proof in Section 3, our goal is to estimate $\norm{(\nabla u)w_{k}}_{\dot{C}^{\gamma}(\Omega_{k} \cap T(r/2))}$ for $k = 1, 2$. Without loss of generality, it suffices to only estimate $\norm{(\nabla u)w_{1}}_{\dot{C}^{\gamma}(\Omega_{1} \cap T(r/2))}$. We decompose it to be sum of $\norm{(\nabla u_{1})w_{1}}_{\dot{C}^{\gamma}(\Omega_{1} \cap T(r/2))}$ and $\norm{(\nabla u_{2})w_{1}}_{\dot{C}^{\gamma}(\Omega_{1} \cap T(r/2))}$, and estimate them one by one.
	
	For the first term $\norm{(\nabla u_{1})w_{1}}_{\dot{C}^{\gamma}(\Omega_{1} \cap T(r/2))}$, $u_{1}$ and $w_{1}$ are both generated by the patch $\Omega_{1}$, and the argument is identical to the single patch case in Section 2. Thus we have,
	\[
	\norm{(\nabla u_{1})w_{1}}_{\dot{C}^{\gamma}(\Omega_{1} \cap T(r/2))} \leq C(\gamma)(A_{\gamma} + A_{\infty})\Big(1+\log_{+}\frac{A_{\gamma}}{A_{\inf}} \Big).
	\]
	
	For the second term $\norm{(\nabla u_{2})w_{1}}_{\dot{C}^{\gamma}(\Omega_{1} \cap T(r/2))}$, we decompose $(\nabla u_{2})w_{1}$ as
	\[
	(\nabla u_{2})w_{1} = (\nabla v_{2})w_{1} + (\nabla\widetilde{v}_{2})w_{1}.
	\]
	
	First we claim that $\norm{(\nabla v_{2})w_{1}}_{\dot{C}^{\gamma}(\Omega_{1} \cap T(r/2))}$ can be bounded by $C(\gamma)A_{\gamma}\Big(1+\log_{+}\dfrac{A_{\gamma}}{A_{\inf}}\Big)$. Indeed, $v_{2}$ can be regarded as the velocity field generated by the patch $\Omega_{2}$ in $\mathbb{R}^{2}$. $w_{1}$ is a divergence free vector field that is tangent to the boundary of $\Omega_{1}$, which is symmetric to $\Omega_{2}$ over $x_{1} = 0$. This case has been treated in \cite{KRYZ} from page 15 to the end of the Section 3. Note that the symmetry here is with respect to $x_{1} = 0$, while in \cite{KRYZ} the symmetry is with respect to $x_{2} = 0$.

	For the second term, denote $g(x) := (\nabla\widetilde{v}_{2})w_{1}(x)$. Then for arbitrary $x, x^{\prime} \in \Omega_{1}\cap T(r/2)$, we have
	
	\begin{equation}\label{Cgammaofg}
	\begin{split}
	\dfrac{|g(x)-g(x^{\prime})|}{|x-x^{\prime}|^{\gamma}} & \leq |\nabla\widetilde{v}_{2}(x^{\prime})|\norm{w_{1}}_{\dot{C}^{\gamma}(\Omega_{1})}\\
	& \quad + \dfrac{|\nabla\widetilde{v}_{2}(x) - \nabla\widetilde{v}_{2}(x^{\prime})|}{|x-x^{\prime}|^{\gamma}}|w_{1}(x)|.
	\end{split}
	\end{equation}

	The first term in (\ref{Cgammaofg}) can be easily bounded by $C(\gamma)A_{\gamma}\Big(1+\log_{+}\dfrac{A_{\gamma}}{A_{\inf}}\Big)$, by Proposition \ref{gradientofvelocity} and definition of $A_{\gamma}$ and $w_{1}$.
	
	For the second term, remember that $\widetilde{v}_{2} = \dfrac{1}{2\pi}\displaystyle\int_{\widetilde{\Omega}_{2}}\dfrac{(x-y)^{\perp}}{|x-y|^{2}}\dfrac{1}{|y|^{4}}dy$, where $\widetilde{\varphi}_{2}$ defines $\widetilde{\Omega}_{2}$ and $\widetilde{w}_{2} = \nabla^{\perp}\widetilde{\varphi}_{2}$. Then by Proposition \ref{T2estimate}, we have
	
	\begin{equation}\label{symmetriccaseinequality}
	\dfrac{|\nabla\widetilde{v}_{2}(x) - \nabla\widetilde{v}_{2}(x^{\prime})|}{|x-x^{\prime}|^{\gamma}} \leq C(\gamma)\Big(1 + \log_{+}\dfrac{A_{\gamma}}{A_{\inf}}\Big)\min\Big\{\dfrac{A_{\gamma}}{|\widetilde{w}_{2}(P_{x})|}, d(x)^{-\gamma}\Big\}.
	\end{equation}
	
	Here $d(x) = \text{dist}(x, \widetilde{\Omega}_{2})$ and $P_{x} \in \partial \widetilde{\Omega}_2$ denotes the closest point. Note that $\widetilde{v}_{2}$ plays the same role as $I_{12}$ in Proposition \ref{T2estimate}. There is a minor difference that the region of integral for $\widetilde{v}_{2}$ is not restricted to $T(r)$, since we have a more explict expression for Green's function in unit disk. However, the same arguments work here.

	\begin{figure}[h]
	\includegraphics[scale=0.5]{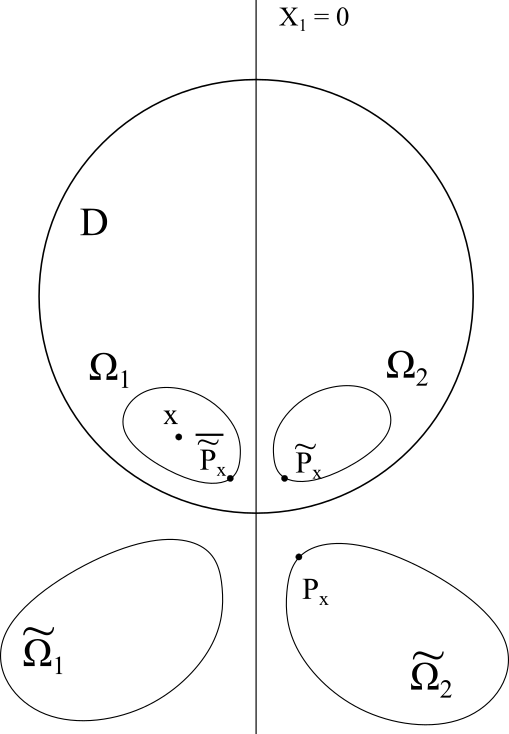}
	\caption{}
	\label{fig6}
	\end{figure}

	The symmetry of $\Omega_{1}$ and $\Omega_{2}$ implies that we can choose $\phi_{1,2}$ so that 
	\[
	\varphi_{1}(x,t) = \varphi_{2}(\bar{x},t),
	\]
for all $t,$	where $\bar{x} = (-x_{1}, x_{2})$. This is the key observation that allows to reduce the upper bound from triple to double exponential growth. Therefore, by definition $w_{i} = \nabla^{\perp}\varphi_{i}$, we have
	\[
	w_{2}(x) = -\overline{w_{1}(\bar{x})}.
	\]
	For any $x \in \Omega_{1}$ (see Figure \ref{fig6})
	\[
	|w_{1}(x)| \leq |w_{1}(x) - w_{1}(\overline{\widetilde{P}_{x}})| + |w_{1}(\overline{\widetilde{P}_{x}})|.
	\]
	From above, we know $|w_{1}(\overline{\widetilde{P}_{x}})| = |w_{2}(\widetilde{P}_{x})|$. Without loss of generality, 
we can choose $r$ small enough so that if $x \in T(r/2)$, then we have $|x - \overline{\widetilde{P}_{x}}| \leq Cd(x)$. Thus we obtain,
	\[
	|w_{1}(x)| \leq CA_{\gamma}d^{\gamma} + C|\widetilde{w}_{2}(P_{x})|.
	\]
	Therefore,
	\[
	\norm{(\nabla\widetilde{v}_{2})w_{1}}_{\dot{C}^{\gamma}(\Omega_{1} \cap T(r/2))} \leq C(\gamma)(A_{\gamma} + A_{\infty})\Big(1+\log_{+}\dfrac{A_{\gamma}}{A_{\inf}}\Big).
	\]
	Thus, we get
	\[
	\norm{(\nabla u)w}_{\dot{C}^{\gamma}(\Omega_{1} \cap T(r/2))} \leq C(\gamma)(A_{\gamma}+ A_{\infty})\Big(1+\log_{+}\dfrac{A_{\gamma}}{A_{\inf}}\Big).
	\]
	We conclude that $A_{\gamma}$, $A_{\inf}^{-1}$ and $A_{\infty}$ grow at most double exponentially in time.
	
	\end{proof}

	\section{Example with double exponential growth}

	In this section, we are going to use an analog of the example constructed in \cite{KS} to show that the upper bound obtained by the previous section is actually 
qualitatively sharp.\\
	
	First we introduce some notation that will be adopted throughout this section. With $\phi$ to be the usual angular variable, we have
	\begin{align*}
	D &:= B_{1}(e_{2}), \quad \textrm{with} \quad e_{2} = (0,1),\\
	D^{+} & := \{ (x_{1},x_{2}) \in D : x_{1} \geq 0\},\\
	D_{1}^{\gamma} & :=  \{(x_{1}, x_{x}) \in D^{+}|\frac{\pi}{2}-\gamma \geq \phi \geq 0\},\\
	D_{2}^{\gamma} & :=  \{(x_{1}, x_{2}) \in D^{+}|\frac{\pi}{2} \geq \phi \geq \gamma\},\\
	Q(x_{1},x_{2}) & :=  \{(y_{1}, y_{2})\in D^{+}|y_{1} \geq x_{1}, y_{2} \geq x_{2}\},\\
	\Omega(x_{1},x_{2},t) & :=  \dfrac{4}{\pi}\displaystyle\int_{Q(x_{1},x_{2})}\frac{y_{1}y_{2}}{|y|^{4}}\omega(y,t)dy.
	\end{align*}

	Consider two-dimensional Euler equation on $D$, let $\omega$ be vorticity. We will take smooth patch initial data $\omega_{0}$ ao that $\omega_{0}(x) \geq 0$ for $x_{1} > 0$ and $\omega_{0}$ is odd in $x_{1}$. Let us state the Key Lemma (see \cite{KS} for Lemma 3.1).

	\begin{lemma}\label{keylemma}
	Take any $\gamma$, $\pi/2 > \gamma > 0$. Then there exists $\delta > 0$ such that
	\begin{equation}\label{u1est531}
	u_{1}(x) = -x_{1}\Omega(x_{1},x_{2}) + x_{1}B_{1}(x), \hspace{1mm}|B_{1}| \leq C(\gamma)\norm{\omega_{0}}_{L^{\infty}}, \hspace{2mm} \forall x \in D_{1}^{\gamma}, |x| \leq \delta
	\end{equation}
	\begin{equation}\label{u2est531}
	u_{2}(x) = x_{2}\Omega(x_{1},x_{2}) + x_{2}B_{2}(x), \hspace{1mm}|B_{2}| \leq C(\gamma)\norm{\omega_{0}}_{L^{\infty}}, \hspace{2mm} \forall x \in D_{2}^{\gamma}, |x| \leq \delta
	\end{equation}
	\end{lemma}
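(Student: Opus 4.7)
The plan is to adapt the proof of Lemma 3.1 in \cite{KS} to $D=B_1(e_2)$, the disk whose boundary passes through the origin. The starting point is the explicit Dirichlet Green's function on this disk,
\[
G_D(x,y)=\tfrac{1}{2\pi}\log|x-y|-\tfrac{1}{2\pi}\log\bigl(|y-e_2|\,|x-y^*|\bigr),\quad y^*=e_2+\frac{y-e_2}{|y-e_2|^2}.
\]
Writing $u=\nabla^\perp\int_D G_D(x,y)\omega(y)\,dy$ and using the odd symmetry $\omega(-y_1,y_2)=-\omega(y_1,y_2)$ to pair $y\in D^+$ with $\bar y=(-y_1,y_2)\in D^-$, the velocity becomes an integral over $D^+$ of a sum of four kernels: the free Biot-Savart kernel $k(x,y)=(x-y)^\perp/|x-y|^2$, its reflection $-k(x,\bar y)$ across $\{y_1=0\}$, the Kelvin-image kernel $-k(x,y^*)$, and the doubly-reflected $k(x,\bar y^*)$.

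A direct Taylor expansion gives $y^*=(y_1,-y_2)+O(|y|^2)$ for $y$ near the origin, so the four-kernel sum coincides with the quarter-plane method-of-images kernel for $\{x_1,x_2>0\}$ up to a smoother remainder. A short computation shows that this quarter-plane kernel automatically factors out the hyperbolic weights: its first component has $x_1$ pulled out with leading behavior $-\tfrac{8x_1 y_1 y_2}{|y|^4}$ for $|y|\gg|x|$, while its second component has $x_2$ pulled out with leading behavior $+\tfrac{8x_2 y_1 y_2}{|y|^4}$. Splitting $\int_{D^+}=\int_{Q(x_1,x_2)}+\int_{D^+\setminus Q(x_1,x_2)}$, the $Q$-piece yields exactly $-x_1\Omega(x_1,x_2)$ (resp.\ $+x_2\Omega$) after integrating against $\omega$ and including the $\frac{1}{2\pi}$ prefactor; the Taylor remainder is absorbed into $x_1 B_1$ (resp.\ $x_2 B_2$) using $x_1\geq c_\gamma|x|$ for $x\in D_1^\gamma$ (resp.\ $x_2\geq c_\gamma|x|$ for $x\in D_2^\gamma$).

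On the near region $D^+\setminus Q(x_1,x_2)$ the kernel is singular at $y=x$, and here the geometric inequality $|y_1+x_1|\geq c_\gamma|y-x|$ (resp.\ $|y_2+x_2|\geq c_\gamma|y-x|$) available for $x\in D_1^\gamma$ (resp.\ $D_2^\gamma$), combined with the odd structure of the integrand in $y_1-x_1$, produces the principal-value cancellation yielding $|B_{1,2}|\leq C(\gamma)\|\omega_0\|_{L^\infty}$. The $O(|y|^2)$ correction in $y^*-(y_1,-y_2)$ contributes a smoother kernel whose integral against $\omega$ is uniformly bounded, and the contribution from $y$ away from the origin is a smooth function of $x$ by Proposition \ref{tubularneighborhood}; both fit into the $x_1 B_1$, $x_2 B_2$ remainders since $|x|\leq\delta$. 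The main obstacle is the principal-value cancellation on the singular near region; this is the delicate step carried out in Section 3 of \cite{KS}, and once it is in hand the remaining asymptotic bookkeeping is routine.
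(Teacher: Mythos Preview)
Your proposal is correct and is precisely the argument of Lemma~3.1 in \cite{KS}, which is what the paper invokes: the paper does not supply its own proof but simply cites \cite{KS} and remarks that the smooth-$\omega$ argument there carries over to patches without change. Your sketch---explicit disk Green's function, odd-in-$x_1$ symmetrization to a four-kernel sum, the observation $y^*=(y_1,-y_2)+O(|y|^2)$ reducing to the quarter-plane method-of-images kernel, and the near/far split with the principal-value cancellation on $D^+\setminus Q(x_1,x_2)$---is exactly the \cite{KS} proof transcribed into the $B_1(e_2)$ coordinates, so there is nothing to compare.
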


	Note that, in \cite{KS} Lemma \ref{keylemma} applies only to smooth $\omega$, but at the same time the argument can extend to patches without any effort. Following the proof in \cite{KS}, exponential growth of curvature can be achieved easily. Indeed, take initial data $\omega_{0}(x)$ which is equal to $1$ everywhere in $D^{+}$ except on a thin strip of width equal to $\delta/2$ ($\delta$ is chosen from Lemma \ref{keylemma}, for some small $\gamma < \pi/10$) near the vertical axis $x_{1} = 0$, where $\omega_{0}(x) = 0$. Then we round the corner of this single patch to make the boundary so that $\omega_{0}$ remains equal to one everywhere in $D^{+}$ except on a thin strip of width equal to at most $\delta$ near the vertical axis $x_{1} = 0$. Denote the patch in $D^{+}$ at the initial time by $P(0)$, so $\omega_{0}(x) = \chi_{P(0)}(x) - \chi_{\bar{P}(x)}(x)$, where $\bar{P} := \big\{(x_{1}, x_{2}): (-x_{1},x_{2}) \in P\big\} $. By odd symmetry, two single patches $P$ and $\bar{P}$ will stay in the two half disks respectively for all time $t$. Due to incompressibility, the measure of the set in $D^+$ where $\omega(x,t) = 0$ does not exceed $2\delta$. In this case, for every $x \in D^{+}$ with $|x| < \delta$, we can derive the following estimate for $\Omega(x_{1}, x_{2})$,
	\[
	\Omega(x_{1},x_{2},t) \geq \int_{2\delta}^{2}\int_{\pi/6}^{\pi/3}\omega(r,\phi)\frac{\sin 2\phi}{2r}d\phi dr \geq \frac{\sqrt{3}}{4}\int_{2\delta}^{2}\int_{\pi/6}^{\pi/3}\frac{\omega(r,\phi)}{r}.
	\]
	The value of the integral on the right hand side is minimal when the area where $\omega(r,\phi) = 0$ is situated around small values of the radial variable. Since this area does not exceed $2\delta$, we have
	\begin{equation}\label{logdelta}
	\frac{4}{\pi}\Omega(x_{1},x_{2},t) \geq c_{1}\int_{c_{2}\sqrt{\delta}}^{1}\int_{\pi/6}^{\pi/3}\frac{1}{r}d\phi dr \geq C_{1}\log \delta^{-1},
	\end{equation}
	where $c_{1}$, $c_{2}$ and $C_{1}$ are positive universal constants.\\

Let $x_0 \in \partial D,$ $0 < x_{0,1} < \delta$ be the point with minimal value of $x_{0,1}$ such that $\omega_0(x_0)=1.$ Consider the trajectory $\Phi_t(x_0);$ due to the boundary conditions 
$\Phi_t(x) \in \partial D$ for all times. If $\delta$ is sufficiently small (note that we can always make it smaller if needed), the estimates \eqref{u1est531} and \eqref{logdelta} 
imply that the first component of $\Phi_t(x_0),$ $\Phi^1_t(x_0),$ converges to zero at an exponential rate. 
Observe  that for a curve, one can use the distance over which it changes direction by $\pi/2$ to estimate the curvature. In our case, $\Phi_{t}(x_{0})$ is being pushed towards the origin from the along the boundary $\partial D$, and by odd symmetry, the axis $x_{1} = 0$ is a barrier that patch $P$ can not pass. So the tangent vector to $\partial P$ has to turn close to $\pi/2$ angle 
over a distance that is exponentially decaying in time. 

	To achieve double exponential growth on curvature, by discussion above, we need an example where we can track a point on the patch boundary that approaches the origin at double exponential speed.\\
	
	Let us prove Theorem \ref{sharpexample} by using the example constructed in \cite{KS}; the proof is similar to \cite{KS}.


	\begin{proof}[Proof of Theorem \ref{sharpexample}]
	We first fix some small $\gamma > 0$ (take $\pi/10$ for example).  We choose $\delta > 0$ small enough such that Lemma \ref{keylemma} applies and that $C_{1}\log\delta^{-1} > 100C(\gamma)$ with $C_{1}$ from (\ref{logdelta}) and $C(\gamma)$ from Lemma \ref{keylemma}. We take the smooth patch initial data $\omega_{0}$ as the one constructed in the previous exponential growth example, with $\omega_{0} = 1$ everywhere in $D^{+}$ except on a thin strip near $x_1=0$ line, except now with width equal to $\delta^{10}$.\\
	
	For $0 < x^{\prime}_{1}, x^{\prime\prime}_{1} < 1$, we denote
	\[
	\mathcal{O}(x^{\prime}_{1},x^{\prime\prime}_{1}) = \big\{(x_{1}, x_{2})\in D^{+}|x^{\prime}_{1} \leq x \leq x^{\prime\prime}_{1}, x_{2} < x_{1}\big\}.
	\]
	For $0 < x_{1} < 1$, we let
	\begin{equation}\label{fastslowspeed}
	\begin{split}
	\overline{u}(x_{1},t) &= \max\limits_{(x_{1},x_{2}) \in D^{+}x_{2} < x_{1}}u_{1}(x_{1},x_{2},t), \\
	\underline{u}(x_{1},t) &= \min\limits_{(x_{1},x_{2}) \in D^{+}x_{2} < x_{1}}u_{1}(x_{1},x_{2},t),
	\end{split}
	\end{equation}

	and define $a(t)$, $b(t)$ by
	\begin{align*}
	a^{\prime}(t) &= \overline{u}(a(t),t),  \quad a(0) = \delta^{10},\\
	b^{\prime}(t) &= \underline{u}(b(t),t), \quad b(0) = \delta.	
	\end{align*}
	
This set up is a little simpler than in \cite{KS}, where an additional scale $\epsilon$ was introduced. However, the rest of the argument remains the same. 
The estimates \eqref{u1est531}, \eqref{u2est531} can be used to control the region $\mathcal{O}_{a(t),b(t)},$ to show that it does not collapse, 
and to trace its approach to the origin. The result is an improvement of the bound \eqref{logdelta} bound yielding exponential growth of $\Omega(a(t),0).$
This leads to double exponential decay of $a(t).$ Since the argument is largely parallel to \cite{KS}, we refer to it for the details.   
		
	\end{proof}

	\section{Appendix}
Here we give a sketch of proof for Proposition \ref{estimatesfromxiao}. 
\begin{proof}[Proof of Proposition \ref{estimatesfromxiao}]
Recall the representation 
\[ G_D(x,y) = \frac{1}{2\pi} \left( \log |x-y| - \log |x - \widetilde{y}| \right) + B(x,y) \]
for $y \in T(r).$ Note that $B(x,y)$ satisfies 
\[ \Delta_x B(x,y) =0, \,\,\,\, \left. B(x,y) \right|_{x \in \partial D} = \frac{1}{2\pi} \log \frac{|x-\widetilde{y}|}{|x-y|}. \]
In \cite{Xu}, the following estimate is proved. Let $D \in C^3,$ and fix any $z \in \partial D.$ Then there exists $r = r(D)$ such that for any $\omega \in L^{\infty}(\bar D)$ 
\begin{eqnarray} \nonumber \norm{\int_{B_r(z) \cap D} B(x,y) \omega(y)\,dy}_{C^{2,\alpha}(\partial D \cap B_{r/2}(z))} = \\ \label{xuest}
\norm{ \frac{1}{2\pi} \int_{B_r(z) \cap D} \log \frac{|x-\widetilde{y}|}{|x-y|} \omega(y)\,dy }_{C^{2,\alpha}(\partial D \cap B_{r/2}(z))} \leq C\|\omega\|_{L^\infty(\bar D)}. \end{eqnarray}
This estimate is a consequence of a calculation in the proof of Proposition 1 and Lemma 4 in \cite{Xu}. The argument is fairly direct and uses
estimates on $\widetilde{y}(y)$ and local representation of $\partial D$ as a graph of a function $f \in C^3;$ the idea is that when $x \in \partial D \cap B_{r/2}(z)$ and 
$y \in D \cap B_r(z)$ then $|x-y|$ and $|x-\widetilde{y}|$ are very close. 
Note that even though the statement 
of Proposition 1 in \cite{Xu} makes an assumption that $D$ has a symmetry axis and $z$ belongs to it, this assumption is never used in the proof 
(it is needed for later applications in \cite{Xu}). 

Now it is straightforward to extend the estimate \eqref{xuest} to the function 
\begin{equation}\label{varphidef} \varphi(x) := \frac{1}{2\pi}\int_{T(r) \cap D} \log \frac{|x-\widetilde{y}|}{|x-y|} \omega(y)\,dy, \end{equation}
yielding 
\begin{equation}\label{varphiimp} \| \varphi \|_{C^{2,\alpha}(\partial D \cap B_{r/2}(z))} \leq C\|\omega\|_{L^\infty}. \end{equation}
Indeed, in \eqref{varphidef} compared with \eqref{xuest} we are adding integration over a region where $y$ satisfies $|y-x| \geq r/2,$ and if we choose $r=r(D)$ 
sufficiently small, also $|\widetilde{y}-x| \geq r/4.$ Then $\log \frac{|x-\widetilde{y}|}{|x-y|}$ is a smooth function of $x$ with uniform bounds on derivatives for all such 
$y \in T(r),$ leading to \eqref{varphiimp}. In fact, since $z$ was arbitrary, we get that 
\[ \|  \varphi \|_{C^{2,\alpha}(\partial D)} \leq C\|\omega\|_{L^\infty(\bar D)}. \] 
By well known results (e.g. Lemma 6.38 of \cite{Trudinger}, there exists an extension $\varphi^e$ of $\varphi$ to $\bar D$ such that 
\[  \|  \varphi^e \|_{C^{2,\alpha}(\bar D)} \leq \|  \varphi \|_{C^{2,\alpha}(\partial D)}. \] 
Now observe that 
\[ g(x) :=  \int_{T(r) \cap D} B(x,y) \omega(y)\,dy     \] 
defined for all $x \in \bar D$ satisfies \[ \Delta g =0, \,\,\, \left. g \right|_{\partial D} = \varphi^e. \]
By Theorems 6.8 and 6.6 of \cite{Trudinger} we obtain 
\[ \|g\|_{C^{2,\alpha}(\bar D)} \leq C \|\varphi \|_{C^{2,\alpha}(\partial D)} \leq C \|\omega\|_{L^\infty(\bar D)}, \]
completing the proof.
\end{proof}

    \section*{Acknowledgement}
	We would like to thank Yao Yao for valuable advice. Partial support of the NSF-DMS grants 1412023 and 1712294 is gratefully acknowledged.


\begin{thebibliography}{90}
		
		\bibitem{Constantin} A. Bertozzi and P. Constantin, \emph{Global regularity for vortex patches}, Comm. Math. Phys., \textbf{152} (1993), 19-28.	
		
		\bibitem{Buttke}T. Buttke, \emph{The  observation  of  singularities  in  the  boundary  of  patches  of  constantvorticity}, Physics of Fluids A: Fluid Dynamics \textbf{1} (1989), 1283-1285.
		
		\bibitem{Chemin} J.-Y. Chemin, \emph{Persistance de structures geometriques dans les fluides incompressibles bidimensionnels}, Annales scientifiques de l'\'Ecole Normale Sup\'erieure, \textbf{26} (1993), 1-26.	
		
		\bibitem{DF}D. Cordoba and C. Fefferman, \emph{Growth of solutions for QG and 2D Euler equations}, J. Amer. Math. Soc., \textbf{15} (2002), 665-670.	
		
		\bibitem{Depauw}N. Depauw, \emph{Poche de tourbillon pour Euler 2D dans un ouvert \`a bord}, J. Math. Pures Appl., \textbf{78} (1999), 313-351.
		
		\bibitem{Dutrifoy}A. Dutrifoy, \emph{On 3D vortex patches in bounded domains, Comm. PDE}, \textbf{28} (2003), 1237-1263.

\bibitem{Evans} L.C.~Evans, \it Partial Differential Equations, \rm Graduate Studies in Mathematics {\bf 19}, AMS, Providence, Rhode Island.
		
		\bibitem{Trudinger}D. Gilbarg and N. Trudinger, \emph{Elliptic Partial Differential Equations of Second Order}, Springer-Verlag, Berlin Heidelberg 2001.	
		
		\bibitem{hardt}R. Hardt and L. Simon, \emph{Boundary regularity and embedded solutions for the oriented plateau problem}, Annals of Math \textbf{110} (1979), 439-486.	
		
		\bibitem{KRYZ}A. Kiselev, L. Ryzhik, Y. Yao and A. Zlato\v s, \emph{Finite time singularity formation for the modified SQG patch equation}, Annals of Math. \textbf{184} (2016), no. 3, 909-948.
		
		\bibitem{KS}A. Kiselev and V. \v Sver\' ak, \emph{Small scale creation for solutions of the incompressible two dimensional Euler equation}, Annals of Math, \textbf{180} (2014), 1205-1220.	
		
		\bibitem{AMajda}A. Majda, \emph{Vorticity and the mathematical theory of incompressible fluid flow}, Comm.Pure Appl. Math. \textbf{39} (1986), 187-220.
		
		\bibitem{Majda}A. Majda and A.Bertozzi, \emph{Vorticity and Incompressible Flow}, Cambridge University Press, 2002.
		
		\bibitem{Marchioro}C. Marchioro and M. Pulvirenti, \emph{Mathematical Theory of Incompressible Nonviscous Fluids}, Springer-Verlag, New York-Heidelberg, 1994.
		
		
		\bibitem{Stein}E. M. Stein, \emph{Singular Integrals and Differentiability Properties of Functions}, Princeton University Press, 1970.
		
		\bibitem{CT}C. Th\"ale, \emph{50 years sets with positive reach--a survey}, Surv. Math. Appl.,	\textbf{3} (2008), 123-165.

\bibitem{Wolibner} W. Wolibner, \it Un theor\`eme sur l'existence du mouvement plan d'un
uide parfait, homog\`ene,
incompressible, pendant un temps infiniment long (French), \rm Mat. Z., {\bf 37} (1933), 698--726.
		
		\bibitem{Xu}X. Xu, \emph{Fast growth of the vorticity gradient in symmetric smooth domains for 2D incompressible ideal flow}, J. Math. Anal. Appl. \textbf{439} (2016) 594-607.
		
		\bibitem{Yudovich}V. I. Yudovich, \emph{Non-stationary flows of an ideal incompressible fluid}, Zh Vych Mat, \textbf{3} (1963), 1032-1066.
		
		
		
		
		
	\end{thebibliography}
\end{document}